\documentclass[11pt]{amsart}
\usepackage{amssymb}
\usepackage{graphicx}
\usepackage{amsfonts}
\usepackage{amsmath}
\usepackage{amsthm}
\usepackage{fancyhdr}
\usepackage[colorlinks=true,linkcolor=blue]{hyperref}
\usepackage{comment}
\usepackage{color}
\usepackage{subcaption}
\usepackage{epsfig}
\usepackage{pst-grad} 
\usepackage{pst-plot} 
\usepackage[space]{grffile} 
\usepackage{etoolbox} 
\usepackage{mathrsfs} 
\usepackage{enumitem} 


\newcommand{\A}{\mathcal{A}}
\newcommand{\al}{\alpha}

\newcommand{\V}{\mathcal{V}}
\newcommand{\R}{\mathbb{R}}
\newcommand{\N}{\mathbb{N}}
\newcommand{\mH}{\mathcal{H}}
\newcommand{\F}{\mathcal{F}}

\newcommand{\bB}{\mathbf{B}}
\newcommand{\sB}{\widetilde{B}}
\newcommand{\C}{\mathcal{C}}
\newcommand{\bC}{\mathbf{C}}
\newcommand{\lan}{\langle}
\newcommand{\ran}{\rangle}

\newcommand{\lc}{\scalebox{1.8}{$\llcorner$}}
\newcommand{\La}{\Lambda}

\newcommand{\Si}{\Sigma}
\newcommand{\de}{\delta}

\newcommand{\ep}{\epsilon}
\newcommand{\pr}{\partial}
\newcommand{\Om}{\Omega}

\newcommand{\mZ}{\mathbb{Z}}
\newcommand{\Z}{\mathcal{Z}}
\newcommand{\mS}{\mathcal{S}}
\newcommand{\f}{\mathbf{f}}

\newcommand{\M}{\mathbf{M}}
\newcommand{\iM}{\mathring{M}}
\newcommand{\pM}{\partial{M}}
\newcommand{\bL}{\mathbf{L}}
\newcommand{\bI}{\mathbf{I}}

\newcommand{\mF}{\mathbf{F}}
\newcommand{\mR}{\mathcal{R}}

\newcommand{\X}{\mathfrak{X}}
\newcommand{\sA}{\mathscr{A}}

\newcommand{\bleta}{\boldsymbol{\eta}}

\newcommand{\bangle}[1]{\left\langle #1 \right\rangle}

\newcommand{\rom}[1]{\expandafter\romannumeral #1}
\newcommand{\Rom}[1]{\uppercase\expandafter{\romannumeral #1}}
\newcommand{\pa}[2]{\frac{\partial #1}{\partial #2}}

\newcommand{\td}[2]{\frac{d #1}{d #2}}

\newcommand{\VarTan}{\operatorname{VarTan}}
\newcommand{\lip}{\operatorname{Lip}}
\newcommand{\spt}{\operatorname{spt}}
\newcommand{\dist}{\operatorname{dist}}
\newcommand{\Div}{\operatorname{div}}
\newcommand{\tr}{\operatorname{tr}}

\newcommand{\vol}{\operatorname{Vol}} 

\newcommand{\Ric}{\operatorname{Ric}}
\newcommand{\Area}{\operatorname{Area}}

\newcommand{\Clos}{\operatorname{Clos}}
\newcommand{\interior}{\operatorname{int}}
\newcommand{\An}{\operatorname{An}}
\newcommand{\wt}{\widetilde}
\newcommand{\Lap}{\Delta}

\renewcommand{\div}{\operatorname{div}}

\DeclareMathOperator{\secondfund}{II}

\def\doubleunderline#1{\underline{\underline{#1}}}


\topmargin 0cm \oddsidemargin 0.51cm \evensidemargin 0.51cm
\textwidth 15.66cm \textheight 21.23cm

\begin{document}

\newtheorem{theorem}{Theorem}[section]
\newtheorem{proposition}[theorem]{Proposition}
\newtheorem{corollary}[theorem]{Corollary}
\newtheorem{example}[theorem]{Example}

\newtheorem{claim}{Claim}

\theoremstyle{remark}
\newtheorem{remark}[theorem]{Remark}

\theoremstyle{definition}
\newtheorem{definition}[theorem]{Definition}

\theoremstyle{plain}
\newtheorem{lemma}[theorem]{Lemma}

\numberwithin{equation}{section}

\title[Min-max theory for Capillary surfaces]{Min-max theory for Capillary surfaces}

\author[Chao Li]{Chao Li}
\address{Courant Institute of Mathematical Sciences, New York University, 251 Mercer St, New York, NY 10012, USA}
\email{chaoli@nyu.edu}

\author[Xin Zhou]{Xin Zhou}
\address{Department of Mathematics, Cornell University, Ithaca, NY 14853, USA, and Department of Mathematics, University of California Santa Barbara, Santa Barbara, CA 93106, USA}
\email{xinzhou@cornell.edu}

\author[Jonathan J. Zhu]{Jonathan J. Zhu}
\address{Department of Mathematics, University of Washington, Seattle, WA, USA}
\email{jonozhu@uw.edu}


\maketitle

\pdfbookmark[0]{}{beg}

\begin{abstract}
We develop a min-max theory for the construction of capillary surfaces in 3-manifolds with smooth boundary. In particular, for a generic set of ambient metrics, we prove the existence of nontrivial, smooth, almost properly embedded surfaces with any given constant mean curvature $c$, and with smooth boundary contacting at any given constant angle $\theta$. Moreover, if $c$ is nonzero and $\theta$ is not $\frac{\pi}{2}$, then our min-max solution always has multiplicity one. We also establish a stable Bernstein theorem for minimal hypersurfaces with certain contact angles in higher dimensions. 
\end{abstract}

\setcounter{section}{-1}

\section{Introduction}
\label{S:intro}

Capillary surfaces are the mathematical model for the interfaces between incompressible immiscible fluids. If a liquid occupies a region $\Omega^{n+1}$ in a container $M^{n+1}$ (a Riemannian manifold with boundary), then Gauss' free energy consists of the following terms: the free surface energy $\mH^n(\partial \Omega\lc \mathring M)$, the wetting energy  $\mH^n(\partial \Omega\lc \partial M)$, and a potential energy of the liquid. Assuming homogeneity of the liquid and the container, this energy can be written as
\begin{equation}
    \label{eq:A-cap}
\A(\Omega)=\mH^n(\partial \Omega\lc \mathring M) + (\cos\theta) \mH^n(\partial \Omega\lc \partial M) - c\vol(\Omega).
\end{equation}
Capillary surfaces are then critical points of the functional (\ref{eq:A-cap}); for classical solutions the boundary $\Sigma=\pr\Omega\cap \mathring{M}$ satisfies the following elliptic partial differential equation subject to Neumann-type boundary conditions
\begin{equation}
    \begin{split}
    H=c, & \qquad \text{ on } \Sigma\\
    \langle \nu, \overline{\eta}\rangle = \cos \theta, & \qquad \text{ on } \pr\Sigma,
    \end{split}
\end{equation}
where $H$ is the mean curvature and $\nu$ the outer surface normal on $\Sigma$, $\overline{\eta}$ is the outer normal of $\partial M$ in $M$ and $c,\theta$ are constants\footnote{In our convention, the mean curvature of a unit sphere in $\R^3$ is 2.}. Since then, there has been a large amount of interdisciplinary investigations on the stationary solutions and local minimizers of the $\A$ energy; see the beautiful monograph of Finn \cite{Finn1986equilibrium} for an overview from antiquity. However, there have been very few general existence results for capillary surfaces, particularly with \textit{prescribed} mean curvature and contact angle.

In this paper, we construct, via a min-max method, nontrivial capillary surfaces of any prescribed constant mean curvature and contact angle, for generic set of ambient metrics:

\begin{theorem}
\label{thm:main-intro}
Let $M^3$ be a compact manifold with smooth boundary $\partial M$. There is an open, dense set of Riemannian metrics on $M$ such that the following holds:
Given any $c\in\R$ and any $\theta\in (0, \frac{\pi}{2}]$ there exists a nontrivial, smooth, almost properly embedded surface $\Si\subset M$ which has constant mean curvature $c$ and smooth boundary $\partial \Sigma$ contacting $\partial M$ with angle $\theta$. 
\end{theorem}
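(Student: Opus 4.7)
The plan is to adapt the Almgren--Pitts min-max program to the capillary functional $\A$ acting on Caccioppoli sets in $M$. First I would set up the space of sweepouts: one-parameter families $\{\Om_t\}_{t\in[0,1]}$ of Caccioppoli sets, continuous in the flat topology, with $\Om_0=\emptyset$ and $\Om_1=M$, and take the capillary width
\[
\bL \;=\; \inf_{\{\Om_t\}}\,\sup_t\,\A(\Om_t).
\]
The assumption $\theta\in(0,\tfrac{\pi}{2}]$ gives $\cos\theta\in[0,1)$, so $\A\geq 0$ on small-volume sets and an isoperimetric comparison shows $\bL > \max\{\A(\emptyset),\A(M)\}$. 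This strict inequality is essential: it ensures the min-max limit is a nonempty capillary CMC surface rather than a trivial endpoint configuration.

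Next, following the discrete-to-continuous interpolation of Marques--Neves and the prescribed-mean-curvature framework of Zhou--Zhu, I would run a pull-tight procedure for $\A$ and then extract an almost-minimizing-in-annuli min-max sequence adapted to the capillary setting. The associated boundary varifolds converge to an integral varifold $V$ whose weighted mass realizes $\bL$. The combinatorial replacement construction, which produces $\A$-stationary competitors in small annuli, must be re-done so as to respect both the contact with $\pr M$ and the weighted wetting term $(\cos\theta)\,\mH^n(\pr\Om\lc\pr M)$; the key point is to check that replacements performed in annuli centered at boundary points remain admissible within the sweepout class and still decrease $\A$.

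For regularity I would argue in two stages. Interior regularity uses PMC regularity theory combined with Allard-type estimates: away from $\pr M$, $V$ is a smooth, almost properly embedded surface with constant mean curvature $c$. Boundary regularity proceeds by classifying tangent varifolds at points of $\pr M$. A capillary monotonicity formula (with correction from the $\cos\theta$ term and from $c$) shows tangent cones are $\A$-stationary cones in a half-space with minimal weighted mass, and the first-variation identity forces any such cone to be a half-plane meeting the boundary hyperplane at angle exactly $\theta$. Standard boundary Schauder theory for the elliptic capillary system then upgrades this to smoothness of $\pr\Si$ meeting $\pr M$ at angle $\theta$. I expect this boundary regularity step, particularly ruling out degenerate or tangential contact configurations at $\pr M$, to be the principal technical obstacle.

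Finally, for the generic density and multiplicity one, I would establish a capillary bumpy metric theorem: on an open, dense set of metrics there are no degenerate capillary CMC surfaces with the prescribed $(c,\theta)$. On this set the min-max limit is smooth, and when $c\neq 0$ the signed first variation of $\A$ along the surface normal rules out higher-multiplicity convergence by producing a positive Jacobi field that would violate the capillary nondegeneracy condition, adapting the multiplicity-one argument of Zhou to incorporate the boundary Robin-type term. The two main obstacles, beyond standard adaptations, are the boundary regularity of $\A$-stationary almost-minimizers and the bumpy metric theorem for capillary $(c,\theta)$ surfaces.
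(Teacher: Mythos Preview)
Your overall Almgren--Pitts framework is correct, but there are two substantive misconceptions about how the paper proceeds, and one serious underestimate of what boundary regularity requires.

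\textbf{Generic metrics and multiplicity one.} You propose a capillary bumpy metric theorem and a Jacobi-field multiplicity-one argument \`a la Zhou. Neither is what the paper does, and a capillary bumpy metric theorem is not available. The open dense set of metrics is simply those for which the boundary mean curvature $H^{\pr M}$ is a Morse function; this guarantees property~(\hyperref[def:generic]{$\star$}), namely that any embedded CMC surface touches $\pr M$ only along a countable union of $1$-dimensional submanifolds. This is used not to rule out degenerate Jacobi fields but to control the barrier-touching set $\mathcal{T}(\Sigma)$ of replacements. Multiplicity one (only asserted when $c\neq 0$ \emph{and} $\theta\neq\tfrac{\pi}{2}$) comes directly from the compactness Theorem~\ref{theorem.compactness.almost.properly.embedded.capillary.surfaces}: at a boundary point of the limit, the capillary condition $\langle\nu,\bar\eta\rangle=\cos\theta\neq 0$ forces all converging sheets to have normals pointing the same way, which is incompatible with multiplicity $\geq 2$ for a Caccioppoli boundary. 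No nondegeneracy or bumpy-metric input is needed.

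\textbf{Boundary regularity.} Your claim that the first-variation identity forces every tangent cone at $\pr M$ to be a single capillary half-plane is incorrect. The paper's classification (Theorem~\ref{thm:tangent-varifolds}) gives four types: a single capillary half-plane plus a weighted half of $T_p\pr M$; two capillary half-planes meeting along a common line; or $k$ or $k+\cos\theta$ times $T_p\pr M$. Dealing with these requires ingredients you do not mention: a new stable Bernstein theorem for capillary minimal surfaces in $\R^3_+$ (Theorem~\ref{theorem.stable.bernstein}), which yields curvature estimates and hence smooth compactness of replacements; a removable-singularity theorem at isolated boundary points (Theorem~\ref{theorem:removal.singularity}), proved by constructing a local capillary CMC foliation; and a novel double-replacement gluing argument at the boundary (Step~2 of Theorem~\ref{T:boundary regularity}) to rule out components that meet the gluing interface only inside $\pr M$. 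These are the principal technical contributions, and standard boundary Schauder theory does not suffice.
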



Here the generic set of metrics consists of those for which the boundary mean curvature is a Morse function. In fact, our min-max theory can handle any metric satisfying: 
\begin{itemize}
\item[($\star$)]\label{def:generic} If $\Sigma \hookrightarrow M$ is an embedded surface of constant mean curvature with respect to $g$, then $\Sigma \cap \pr M$ is contained in a countable union of connected, smoothly embedded $1$-dimensional submanifolds. 
\end{itemize}

Our theory is the first min-max theory for nonzero Neumann boundary conditions and nonzero mean curvature, and builds upon work of the second and third named authors \cite{Zhou-Zhu17, Zhou-Zhu18} for prescribed mean curvature surfaces, and of the second named author and M. Li \cite{Li-Zhou16} for free boundary ($\theta=\frac{\pi}{2}$) minimal surfaces. 

As we were finishing writing the paper, we were pleased to learn that a min-max construction for the special case of capillary minimal surfaces ($c=0$) in convex domains in $\R^3$ was independently carried out by De Masi and De Philippis \cite{DePhilippisDeMasi21}. Indeed, property ($\star$) is designed to rule out large sets of improper boundary touching; in the case of minimal $\Sigma$ and convex $\pr M$, \textit{any} boundary touching is ruled out (hence ($\star$) is satisfied) by the maximum principle. 

A key component of our theory is the establishment of curvature estimates for stable capillary surfaces. To achieve these, we need a Bernstein-type theorem for stable capillary minimal surfaces in a Euclidean half-space:

\begin{theorem}
\label{thm:bernstein-intro}
    Let $\R^3_+$ denote the half space $\{(x_1,x_2,x_3):x_1\ge 0\}$, and suppose $\Sigma^2\subset \R^3_{+}$ is a properly immersed, two-sided capillary minimal surface. Suppose also that $\Sigma$ has Euclidean area growth, that is, there exists some $C>0$ such that
    \[Area(\Sigma\cap B_r(0))< C r^2\]
    for any $r>0$, and that $\Sigma$ is stable for the capillary functional. Then $\Sigma$ is planar.
\end{theorem}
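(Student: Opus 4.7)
The plan is to adapt the Fischer-Colbrie--Schoen logarithmic cutoff argument for stable minimal surfaces in $\R^3$ to the capillary setting. The starting point is the capillary stability inequality. Since $\partial\R^3_+$ is totally geodesic, the boundary term in the second variation reduces to $\cot\theta\cdot\secondfund^\Sigma(\eta,\eta)$. Minimality gives $\secondfund^\Sigma(\eta,\eta)=-\secondfund^\Sigma(T,T)$, where $T$ is the unit tangent of $\partial\Sigma$, and differentiating the contact-angle relation $\langle\nu,\bar\eta\rangle=\cos\theta$ along $T$ identifies $\secondfund^\Sigma(T,T)$ with $\tan\theta\cdot\kappa_g$, where $\kappa_g$ is the geodesic curvature of $\partial\Sigma$ in $\Sigma$. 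The trigonometric factors cancel cleanly, and for any compactly supported Lipschitz test function $\varphi$,
\[
\int_\Sigma |A|^2\varphi^2 \;\le\; \int_\Sigma |\nabla\varphi|^2 \;+\; \int_{\partial\Sigma} \kappa_g\,\varphi^2.
\]

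I would then apply Gauss--Bonnet on $\Sigma\cap B_R$ at a slicing radius $R$ chosen via coarea. Since $\Sigma$ is minimal, $K=-\tfrac12|A|^2$, so Gauss--Bonnet rewrites the boundary integral as
\[
\int_{\partial\Sigma\cap B_R}\kappa_g \;=\; 2\pi\chi(\Sigma\cap B_R) \;+\; \tfrac12\int_{\Sigma\cap B_R}|A|^2 \;-\; \int_{\Sigma\cap\partial B_R}\kappa_g^{\mathrm{sl}} \;-\; (\text{corner terms}).
\]
Plugging this into the stability inequality with a logarithmic cutoff $\varphi_R$ equal to $1$ on $B_R$ and decaying to $0$ outside $B_{R^2}$: by Euclidean area growth $\int_\Sigma|\nabla\varphi_R|^2\le C/\log R$; by the coarea choice of $R$, the slice integral $\int_{\Sigma\cap\partial B_R}\kappa_g^{\mathrm{sl}}$ stays controlled along a sequence $R_j\to\infty$; the corner contributions are harmless. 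Absorbing $\tfrac12\int|A|^2$ into the left-hand side yields schematically
\[
\tfrac12 \int_{\Sigma\cap B_R}|A|^2 \;\lesssim\; \frac{1}{\log R} \;+\; \chi(\Sigma\cap B_{R^2}) \;+\; o(1).
\]

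The main obstacle is to control $\chi(\Sigma\cap B_R)$ as $R\to\infty$, since a priori the number of handles or boundary components of $\Sigma\cap B_R$ could grow. My plan is a bootstrap: the above estimate first gives $|A|^2\in L^1(\Sigma)$, and combined with a standard local Gauss--Bonnet/monotonicity argument this bounds $\chi(\Sigma\cap B_R)$ uniformly in $R$. With uniformly bounded topology in hand, re-running the log-cutoff argument forces $\int_\Sigma|A|^2=0$, so $\Sigma$ must be planar. The most delicate technical step will be the interplay between the capillary boundary, the topology of $\Sigma$, and the Euclidean area growth along the coarea-chosen radii; in particular, one must verify that the corner terms and the slice geodesic curvature integrals do not accumulate along the chosen sequence.
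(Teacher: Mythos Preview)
Your proposed route is genuinely different from the paper's, and as written it has a real gap.

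The paper never touches Gauss--Bonnet or the topology of $\Sigma\cap B_R$. Instead it kills the capillary boundary term \emph{algebraically} by choosing a clever test function: with $w=\langle e_1,\nu\rangle$ (a Jacobi function) and $\varphi=1-w\cos\theta$, one has $\Delta\varphi+|A|^2\varphi=|A|^2$ on $\Sigma$ and exactly $\partial_\eta\varphi=Q\varphi$ on $\partial\Sigma$, where $Q=-\cot\theta\,A(\eta,\eta)$ is the boundary coefficient in the capillary second variation. Plugging $\varphi f$ into the stability inequality, the boundary contributions cancel identically, leaving
\[
\int_\Sigma |A|^2\,\varphi\, f^2 \;\le\; \int_\Sigma \varphi^2\,|\nabla f|^2,
\]
and since $1-|\cos\theta|\le\varphi\le 1+|\cos\theta|$ this gives $\int_\Sigma |A|^2 f^2\le C_\theta\int_\Sigma|\nabla f|^2$ with no boundary term at all. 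The logarithmic cutoff then finishes exactly as in the interior case.

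Your approach, by contrast, keeps the boundary term and tries to absorb it via Gauss--Bonnet. The identity $\cot\theta\,A(\eta,\eta)=\pm\kappa_g$ along $\partial\Sigma$ is fine, but the resulting inequality
\[
\tfrac12\int_{\Sigma\cap B_R}|A|^2 \;\lesssim\; \tfrac{1}{\log R}+2\pi\chi(\Sigma\cap B_R)+\big(\text{slice and corner terms}\big)
\]
only yields $|A|^2\in L^1$ if you already control the topology of $\Sigma\cap B_R$: with the sign you wrote you need an upper bound on $\chi$, hence on the number of connected components of $\Sigma\cap B_R$ (and with the opposite sign you would need a lower bound on $\chi$, hence control on genus and boundary components). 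Neither is available a priori for a properly immersed surface with only Euclidean area growth; a connected $\Sigma$ can weave in and out of $B_R$ many times, and nothing in your hypotheses prevents that. Your ``bootstrap'' is therefore circular: you propose to bound $\chi$ using $|A|^2\in L^1$, but your only route to $|A|^2\in L^1$ already presupposes a bound on $\chi$. The slice geodesic-curvature and corner contributions have the same issue: each corner costs up to $\pi$, and the number of corners is governed by the same uncontrolled topology. So the argument does not close without substantial new input---which is exactly what the test function $\varphi=1-\cos\theta\,\langle e_1,\nu\rangle$ provides for free.
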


Prior to the completion of this article, Hong-Saturnino \cite{HS21} independently proved Theorem \ref{thm:bernstein-intro}. Our approach differs from theirs and also admits generalisations to hypersurfaces $\Sigma^n \subset \mathbb{R}^{n+1}$ of dimension $2\leq n\leq 5$ (with restrictions on the contact angle when $n>2$); see Remark \ref{rmk:souam} and Appendix \ref{sec:bernstein}. We expect these Bernstein-type results to be of significant independent interest in the study of capillary surfaces, just as the classical Bernstein theorem is fundamental to the theory of minimal surfaces (without boundary). Note that the classical Bernstein theorem holds for stable minimal hypersurfaces of dimension $2\leq n\leq 6$. We expect the generalisations of \ref{thm:bernstein-intro} for dimension $n>2$ to play an important role in the min-max construction of capillary hypersurfaces in higher dimensions.

Historically, the study of capillary surfaces began in 1805, when Young \cite{Young1805essay} studied the equilibrium state of liquid fluids, introducing the notion of \textit{mean curvature} and proposing the boundary contact angle condition of capillarity - nowadays also known as \textit{Young's law}. These ideas were reintroduced by Laplace \cite{Laplace1805mechenics} and reformulated by Gauss \cite{Gauss1830principia} in 1830 through introducing the free energy as in (\ref{eq:A-cap}). 

We are interested in the existence and regularity of stationary solutions $\Sigma:=\pr\Omega\lc \mathring M$ for $\A$. Interior regularity of $\Sigma$ follows from the classical theory of minimal and constant mean curvature surfaces, so a major challenge is the regularity near the boundary. A classical result in this direction was due to Taylor \cite{Taylor77}, who proved $C^\infty$ boundary regularity for local minimizing solutions when $n=3$ and $M$ is a general smooth Riemannian manifold with boundary. Note that when $\theta=\tfrac{\pi}{2}$, $\Sigma$ is called a varifold with free boundary, and its boundary regularity was settled by Gr\"uter \cite{Gruter1987optimal} and Gr\"uter-Jost \cite{GruterJost1986allard}. When $n\ge 4$, De Philippis-Maggi obtained a sharp partial regularity result \cite[Theorem 1.2]{DePhilippi-Maggi15} for local minimizers of much more general capillary type functionals, allowing anisotropic surface energy, inhomogenous boundary adhesion and nonconstant gravitational energy\footnote{In particular, the singular set of $\partial\Sigma$ has Hausdorff dimension at most $n-4$. This dimension bound is conjecturally not optimal for the $\A$ functional, cf. \cite{Gruter1987optimal}.}. Recently, minimizers of $\A$ have been used as a key geometric tool by the first-named author \cite{Li2017polyhedron} in proving Gromov's geometric comparison theorems for scalar curvature via Riemannian polyhedrons (see also \cite[Section 1.2]{li2020dihedral} on the perspective of its higher dimensional extensions).

Our proof of Theorem \ref{thm:main-intro} is based on a min-max approach for constructing (unstable) critical points of the $\A$ functional. 
Min-max methods have been very successful recently in the construction of minimal hypersurfaces, constant mean curvature (CMC) hypersurfaces, and more generally hypersurfaces with prescribed mean curvature (PMC). These hypersurfaces may be seen as critical points of the area functional, possibly under certain volume constraints, or of modified area functionals. The Almgren-Pitts min-max theory \cite{Almgren62, Almgren65, Pitts, Schoen-Simon81} provided the first general existence result for closed minimal hypersurfaces, and was recently greatly improved starting from the solution of the Willmore conjecture by Marques-Neves \cite{Marques-Neves14}.  Yau's conjecture \cite{Yau82} on the existence of infinitely many closed minimal surfaces was solved by combining the works of Marques-Neves \cite{Marques-Neves17} and Song \cite{Song18}.  For generic metrics, Irie-Marques-Neves \cite{Irie-Marques-Neves18}, Marques-Neves-Song \cite{Marques-Neves-Song19} respectively proved density and equidistribution results for closed minimal hypersurfaces, using the Weyl Law for the area functional by Liokumovich-Marques-Neves \cite{Liokumovich-Marques-Neves18}. In contrast, Song and the second-named author obtained scarring results for closed minimal hypersurfaces surrounding any closed stable hypersurface \cite{Song-Zhou20} using a cylindrical version of the Weyl law in \cite{Song18}. Around the same time, a Morse theory for the area functional was established: the second-named author \cite{Zhou19} proved the Multiplicity One Conjecture raised by Marques-Neves \cite{Marques-Neves16, Marques-Neves18} (see also Chodosh-Mantoulidis \cite{Chodosh-Mantoulidis20}). Using this, Marques-Neves \cite{Marques-Neves18} proved that, for bumpy metrics, there exists a closed minimal hypersurface of Morse index $p$ for each $p\in \mathbb N$. Recently, the Morse inequalities for the area functional were proved for bumpy metrics by Marques-Montezuma-Neves \cite{Marques-Montezuma-Neves20}. Parallel to the developments for minimal hypersurfaces, which have zero constant mean curvature, the second and third named authors \cite{Zhou-Zhu17, Zhou-Zhu18} further generalized the Almgren-Pitts min-max theory to the CMC and PMC settings, and established a complete existence theory for closed CMC hypersurfaces for all prescribed mean curvatures, as well as an existence theory for closed PMC hypersurfaces for a smooth generic set of prescription functions. The latter PMC min-max theory played an essential role in \cite{Zhou19}. Very recently, a mapping approach for constructing min-max CMC spheres was developed by Cheng and the second-named author \cite{Cheng-Zhou20}. Finally, we note that the min-max theory for free boundary minimal hypersurfaces, which is a special class of capillary minimal hypersurfaces with $\pi/2$ contact angle, has also enjoyed significant advancements recently; we refer to \cite{Li-Zhou16, Wang20, Sun-Wang-Zhou20} for more details. 

\subsection{Overview of the proof}

Our min-max theory largely follows the Almgren-Pitts procedure. (The reader may consult \cite{Zhou-Zhu17} for a detailed overview of typical min-max methodology. In this procedure, a min-max \textit{width} is defined over discrete families of generalised surfaces. The space of generalised surfaces is chosen rather large to guarantee a weak limit, which transfers the core problem from one of existence to regularity of the limit. 

The regularity theory for capillary surfaces is relatively undeveloped compared to the regularity theories for closed CMC hypersurfaces or even free boundary minimal hypersurfaces, and there are new behaviours which differ markedly from the better-established theories; see for instance Figure \ref{figure.different.boundary.behaviors} or Theorem \ref{thm:tangent-varifolds}. The latter theorem classifies all possible tangent cones to the min-max limit, and is an essential step in the regularity theory. To prove it, we cannot rely on Allard regularity or reflection techniques as in other min-max theories. Instead, we devise an argument based on replacements and the monotonicity formula. The classification also accounts for nontrivial tangent cones which are new for capillary surfaces: 1 or 2 half-planes meeting at the prescribed angle $\theta$, or an $\mathbb{N}$ or $\mathbb{N}+\cos \theta$ multiple of the barrier plane. 

In our present setup, we use \textit{capillary boundaries} of Caccioppoli sets as our generalised surfaces (see Definition \ref{D:capillary boundary varifold}). Roughly, these involve adding a $\cos\theta$ multiple of a portion $B \subset\pr M$ of the ambient boundary; the resulting varifolds actually behave as varifolds with free boundary. This allows us to incorporate certain technical innovations from \cite{Zhou-Zhu17, Zhou-Zhu18} for Caccioppoli boundaries, and \cite{Li-Zhou16} for free boundary. 

The proof of regularity proceeds via a replacement method for surfaces which are almost minimising (in annular regions) with respect to the $\A$ functional. These replacements are limits of local minimisers, which must be regular. A key step is to prove a compactness (Theorem \ref{theorem.compactness.almost.properly.embedded.capillary.surfaces}) that shows the replacements retain this regularity in the limit. This compactness relies on curvature estimates for stable capillary surfaces, which to the authors' knowledge were not available prior to our work. Nevertheless, we were able to prove the stable Bernstein Theorem \ref{theorem.stable.bernstein} which implies the required estimates. 

One typically proceeds by taking overlapping \textit{secondary replacements}, to extend the first replacements towards the centre of the annular region, and also to provide continuity with the original min-max varifold $V$. These gluing procedures can be simplified greatly on the interior using the CMC regularity theory of the second and third named authors \cite{Zhou-Zhu17}. However, due to addition of the boundary portion $B$ when $\theta \neq \frac{\pi}{2}$, the replacements may not \textit{a priori} contain interior points at the gluing interface. To deal with this, we retain the double-replacement method, and devise a novel gluing method at the boundary to ensure preliminary $C^1$ regularity up to the boundary; the classical maximum principle then rules out components totally supported in the boundary $\pr M$. (See Step 2 of Theorem \ref{T:boundary regularity}.)

The extension \textit{across} the annular centre requires a removable singularity theorem for capillary surfaces, which again was not available to the authors' knowledge. To establish this key result, we adapt a technique of White \cite{White87space} to the capillary setting (see Theorem \ref{theorem:removal.singularity}) to establish regularity across an isolated boundary singularity. We then complete the proof of regularity by showing that the replacements (now fully regular) locally coincide with the original min-max limit.

\subsection*{Outline of the paper}

In Section \ref{S:preliminaries}, we outline our basic notions and prove several preliminary results, including the Bernstein-type Theorem \ref{thm:bernstein-intro}, a removable singularities theorem for capillary surfaces, and certain maximum principles. In Section \ref{S:setups}, we describe our notion of `generic' metric, as well as compactness properties of regular capillary varifolds. In the remainder of that section, we detail the min-max setup including the statement of the main min-max Theorem \ref{T:main1}, prove the existence of nontrivial sweepouts and describe the pull-tight process. 

In Section \ref{S:AM} we define a capillary notion of `almost-minimising' and construct replacements. Then, in Section \ref{S:existence}, we show that the pull-tight process yields a min-max limit which has bounded first variation and is almost-minimising in small annuli. Finally, in Section \ref{S:regularity}, we prove the main regularity Theorem \ref{T:boundary regularity}.

\subsection*{Acknowledgements}

The authors are very grateful to Brian White for helpful discussions, and to Guido De Philippis for informing us of their independent progress on this subject.

CL was supported by National Science Foundation under grant DMS-2005287. XZ was supported by NSF grant DMS-1945178, 
and an Alfred P. Sloan Research Fellowship. JZ was supported in part by the National Science Foundation under grant DMS-1802984 and the Australian Research Council under grant FL150100126.

\section{Preliminaries}
\label{S:preliminaries}

In this section we outline some basic background, and also detail certain preliminary results on capillary CMC surfaces. This includes our proofs of the stable Bernstein theorem for capillary minimal surfaces (Theorem \ref{theorem.stable.bernstein}) and a removable singularity result (Theorem \ref{theorem:removal.singularity}).

\subsubsection*{Basic notations}

\begin{itemize}

\item $\C(M)$ or $\C(U)$ the space of sets $\Om\subset M$ or $\Om\subset U\subset M$ with finite perimeter (Caccioppoli sets), \cite[\S 14]{Simon83}\cite[\S 1.6]{Giusti84};

\item $\partial\Om$ denotes the (reduced)-boundary of $[[\Om]]$ as an integral current, and $\nu_{\partial\Om}$ denotes the outward pointing unit normal of $\partial \Om$, \cite[14.2]{Simon83};

\item $\X(M)$: set of tangent vector fields on $M$; $\X_{tan}(M)$: tangent vector fields on $M$ that are tangent to $\partial M$ along $\partial M$; \cite[(2.2)]{Li-Zhou16}. For a given relatively open subset $U\subset M$, $\X_{tan}(U)$ is the set of vector fields $X\in \X_{tan}(M)$ that is supported in $U$;

\item The topological interior of a set $U$ is denoted by $\mathring{U}$;

\item Given a compact subset $K\subset M$, the relative interior $\interior_M(K)$ (or sometime abbreviated as $\interior(K)$) denote the interior of $K$ with respect to the relative topology on $M$. The relative boundary $\partial_{rel}S$ is the set of points in $M$ which are neither in $\interior_M(S)$ nor $\interior_M(M\setminus S)$. 
\end{itemize}

\subsubsection*{Notions related to Capillary surfaces} $(M^{n+1},g)$ is a smooth, compact, oriented Riemannian manifold with boundary. We can always assume that $M$ is a subset of some closed Riemannian manifold $\widetilde{M}$ of the same dimension such that $M\subset \widetilde{M}$. Assume that $\widetilde{M}$ is embedded in some $\R^L$, $L\in\N$. Let $\Sigma^n$ be an orientable $n$ dimensional compact manifold with non-empty boundary $\partial \Sigma$ and $\partial \Sigma\subset \partial M$. Assume $\Sigma$ separates $\mathring{M}$ into two connected components. Fix one component and call it $\Omega$. Denote $\overline{\eta}$ the outward pointing unit normal vector field of $\partial M$ in $M$, $\nu$ the unit normal vector field of $\Sigma$ in $\Omega$ pointing outward $\Omega$, $\eta$ the outward pointing unit normal vector field of $\partial \Sigma$ in $\Sigma$, $\overline{\nu}$ the unit normal vector field of $\partial\Sigma$ in $\partial M$ pointing outward $\Omega$. Let $A$ denote the second fundamental form of $\Sigma\subset \Omega$, $\secondfund$ denote the second fundamental form of $\partial M\subset M$. We take the convention that $A(X_1,X_2)=\bangle{\nabla_{X_1}X_2,\nu}$. Denote $H,\overline{H}$ the mean curvature of $\Sigma\subset \Omega$, $\partial M\subset M$, respectively. Note that in our convention, the unit sphere in $\R^3$ has mean curvature $2$.

\begin{figure}[htbp]
    \centering
    \includegraphics{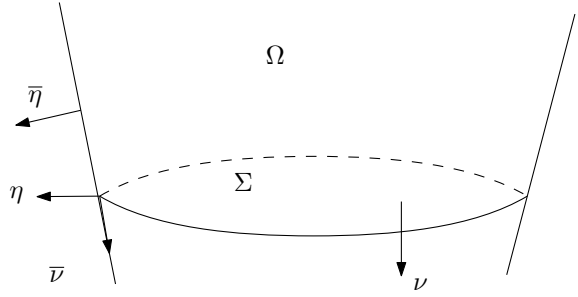}
    \caption{Capillary surfaces}
    \label{picture.capillary.notations}
\end{figure}

\subsubsection*{Several notions of balls} $B_r(p)$ and $\An_{s, r}(p)$ denote respectively Euclidean open ball of radius $r$ and Euclidean open annulus $B_r(p)\setminus \Clos(B_s(p))$ centered at $p$ in $\mathbb{R}^L$. $\widetilde{B}_r(p)$ will denote the open geodesic ball in $\widetilde{M}$, and $\widetilde{B}^+_r(p)$ denotes the open Fermi half-ball of radius $r$ centered at $p\in \partial M$. (We refer to \cite[Section 2.2 and Appendix]{Li-Zhou16} for the notion related to Fermi coordinates near $\partial M$.) We denote the Fermi half-spheres by $\widetilde{S}^+_r(p) = \partial_{rel}(\widetilde{B}^+_r(p))$. 

We define the (relatively) open annular regions 

\[ \wt{\An}_{s,r}(p) = \begin{cases} \wt{B}_r(p)\setminus \Clos(\wt{B}_s(p)) &, p\in\mathring{M} \\ \wt{B}^+_r(p)\setminus \Clos(\wt{B}^+_s(p)), & p \in \partial M.\end{cases}\]

\subsubsection*{Classical notions in GMT} We denote by $\mH^n$ the $n$-dimensional Hausdorff measure; $\bI_n(M)$ (or $\bI_n(M, \mZ_2)$) the space of $n$-dimensional integral (or mod 2) currents in $\R^L$ with support in $M$; $\Z_{n}(M)$ (or $\Z_n(M, \mZ_2)$) the space of integral (or mod 2) currents $T\in\bI_n(M)$ with $\partial T=0$; $\V_n(M)$ the closure, in the weak topology, of the space of $n$-dimensional rectifiable varifolds in $\R^L$ with support in $M$; $G_n(M)$ the Grassmannian bundle of un-oriented $n$-planes over $M$; $\F$ and $\M$ respectively the flat norm \cite[\S 31]{Simon83} and mass norm \cite[26.4]{Simon83} on $\bI_n(M)$; $\mF$ the varifold $\mF$-metric on $\V_n(M)$ 
\cite[2.1(19)(20)]{Pitts}.

\subsection{Capillary surfaces}

Capillary surfaces are critical points of the following weighted area functional defined on $\C(M)$. Given $c\in\mathbb{R}$ and $\theta\in (0, \pi/2]$, define the {\em $\A$-functional} on $\C(M)$ as
\begin{equation}
\label{E: A}
\A(\Om)=\mH^n(\partial\Om\lc \iM) + \cos\theta\,\mH^n(\partial\Om\lc \pM) -c\vol(\Omega). 
\end{equation}

In the variational point of view, it is natural to view the first two terms in the above definition as an integral term.
\begin{definition}[Capillary boundary current/varifold]
\label{D:capillary boundary varifold}
Fix a capillary contact angle $\theta\in (0, \frac{\pi}{2}]$. Given a Caccioppoli set $\Omega\in\C(M)$, the {\em capillary boundary current}, denoted as $\partial^\theta \Omega$, is defined as
\begin{equation}
    \partial^\theta\Omega = \partial\Omega\lc \iM + \cos\theta \cdot \partial\Omega\lc \pM.
\end{equation}
The {\em capillary boundary varifold}, denoted as $|\partial^\theta \Omega|$, is defined by
\begin{equation}
    \int_{G_n(M)}f(x, S)d|\partial^\theta \Omega| = \int_{\partial\Omega\lc \iM} f(x, T_x(\partial \Omega))d\mH^n + \cos\theta \int_{\partial\Omega\lc \pM} f(x, T_x(\partial M)) d\mH^n.
\end{equation}
We will also use $d\mu_\theta$ to denote the associated Radon measure of $|\partial^\theta\Om|$; that is
\begin{equation}
    \int_M f(x) d\mu_\theta(x) = \int_{\partial\Omega\lc \iM} f(x)d\mH^n(x) + \cos\theta \int_{\partial\Omega\lc \pM} f(x) d\mH^n(x). 
\end{equation}
\end{definition}

Using these notations, the $\A$-functional can be written as
\begin{equation}
    \label{E:A'}
    \A(\Om)=\M(\partial^\theta\Omega) -c\vol(\Omega). 
\end{equation}

By \cite[Section 5]{Marques-Neves18}, $\C(M)$ is identified with $\bI_{n+1}(M, \mZ_2)$. In particular, the flat $\F$-norm and the mass $\M$-norm are the same on $\C(M)$. Given $\Om_1, \Om_2\in \C(M)$, we define the {\em $\mF$-distance} between them as:
\[ \mF(\Om_1, \Om_2)=\F(\Om_1-\Om_2)+\mF(|\partial\Om_1\lc \iM|, |\partial\Om_2\lc \iM|). \]
Note that a sequence $\Om_i\to \Om_\infty$ converges under the $\mF$-distance if and only if $\Om_i\to\Om_\infty$ weakly as currents and the interior pieces $|\partial\Om_i\lc \iM|$ converge as varifolds to $|\partial\Om_\infty\lc \iM|$.

We have the following simple lemma.

\begin{lemma}
\label{L:weak convergence of reduced currents}
Suppose that a sequence $\Omega_i\in \C(M)$ converges to some limit $\Omega_\infty\in\C(M)$ under the $\mF$-distance, then $\partial^\theta\Omega_i$ converges weakly to $\partial^\theta\Omega_\infty$ as $n$-currents as well as under the $\mF$-distance. 
\end{lemma}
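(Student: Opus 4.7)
I will decompose
\[
\partial^\theta\Omega_i = \partial\Omega_i\lc \iM + \cos\theta\,\partial\Omega_i\lc \pM,
\]
prove convergence of each summand, and recombine. The key structural observation is that the boundary piece $\partial\Omega_i\lc \pM$, being an integral $n$-current supported on the $n$-dimensional manifold $\pM$, must take the form $[[E_i]]$ for a measurable set $E_i\subset \pM$ (essentially the trace set of $\chi_{\Omega_i}$); thus it is governed by a single $\{0,1\}$-valued $L^\infty$ function on $\pM$.

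For weak current convergence of the interior pieces, I would use a smooth cutoff $\phi_\epsilon$ on $\widetilde M$ that vanishes in an $\epsilon$-neighborhood of $\pM$ and equals $1$ outside a $2\epsilon$-neighborhood. For any test $n$-form $\omega$,
\[
(\partial\Omega_i\lc \iM)(\phi_\epsilon\omega) = \partial\Omega_i(\phi_\epsilon\omega) \longrightarrow \partial\Omega_\infty(\phi_\epsilon\omega) = (\partial\Omega_\infty\lc \iM)(\phi_\epsilon\omega),
\]
since $\phi_\epsilon=0$ on $\pM$ kills the boundary piece and $\F(\Omega_i-\Omega_\infty)\to 0$ gives weak convergence of $\partial\Omega_i$. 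The tail $(\partial\Omega_i\lc \iM)((1-\phi_\epsilon)\omega)$ is bounded by $\|\omega\|_\infty\|\partial\Omega_i\lc \iM\|(\spt(1-\phi_\epsilon))$, which by upper semicontinuity of mass on closed sets under varifold $\mF$-convergence is dominated in the $\limsup$ by $\|\partial\Omega_\infty\lc \iM\|$ of a neighborhood of $\pM$; this tends to $0$ as $\epsilon\to 0$, since $\partial\Omega_\infty\lc \iM$ puts no mass on $\pM$. Hence $\partial\Omega_i\lc \iM \to \partial\Omega_\infty\lc \iM$ weakly as currents, and by subtraction $\partial\Omega_i\lc \pM \to \partial\Omega_\infty\lc \pM$ weakly, yielding the weak current convergence of $\partial^\theta\Omega_i$.

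For $\mF$-distance convergence, the interior pieces converge as varifolds by hypothesis. For the boundary pieces, weak current convergence of $[[E_i]]\to [[E_\infty]]$ translates into $\chi_{E_i}\to \chi_{E_\infty}$ weak-$*$ in $L^\infty(\pM)$. Testing against a smooth extension of the volume form of $\pM$ (available since $\pM$ is compact) yields mass convergence $\mathcal H^n(E_i)\to \mathcal H^n(E_\infty)$, and expanding
\[
\int (\chi_{E_i}-\chi_{E_\infty})^2 = \int\chi_{E_i} - 2\int \chi_{E_i}\chi_{E_\infty} + \int\chi_{E_\infty}
\]
with the weak-$*$ tester $\chi_{E_\infty}\in L^1(\pM)$ promotes this to $L^1$ convergence, using crucially that the limits are $\{0,1\}$-valued. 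Consequently $|\partial\Omega_i\lc \pM|\to |\partial\Omega_\infty\lc \pM|$ as varifolds and $\M(\partial\Omega_i\lc \pM - \partial\Omega_\infty\lc \pM)\to 0$. Combined with $\F(\partial\Omega_i-\partial\Omega_\infty)\le \M(\Omega_i-\Omega_\infty)=\F(\Omega_i-\Omega_\infty)\to 0$ (using $\M=\F$ on $\C(M)$), these assemble into $\mF$-distance convergence of $\partial^\theta\Omega_i$ to $\partial^\theta\Omega_\infty$. The main obstacle is the boundary piece: boundary traces of $BV$ functions are not generally continuous under weak Caccioppoli convergence, but here the $\{0,1\}$-valued trace structure together with mass convergence extracted from global $n$-forms provides the nontrivial upgrade from weak-$*$ to $L^1$ convergence.
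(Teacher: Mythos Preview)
Your proof is correct and follows essentially the same route as the paper: establish weak convergence of the interior piece $\partial\Omega_i\lc\iM$ using the varifold hypothesis, deduce weak convergence of the boundary piece by subtraction from $\partial\Omega_i$, and then upgrade the boundary convergence to mass convergence. The paper packages the first step via the algebraic identity $\partial^\theta\Omega = (1-\cos\theta)\,\partial\Omega\lc\iM + \cos\theta\,\partial\Omega$, which makes the weak convergence immediate, whereas you unwind it with a cutoff argument; these are equivalent.

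The one genuine difference is in the upgrade step. The paper simply asserts that for integral $n$-currents supported on the $n$-manifold $\pM$, weak convergence coincides with $\M$-norm convergence (via the compactness theorem plus the fact that the flat norm equals the mass norm in top dimension, since there are no $(n+1)$-currents in $\pM$ to fill with). Your argument instead writes $\partial\Omega_i\lc\pM = [[E_i]]$ and uses the $\{0,1\}$-valued structure to promote weak-$*$ convergence of $\chi_{E_i}$ to $L^1$ convergence by the identity $\int(\chi_{E_i}-\chi_{E_\infty})^2 = \int\chi_{E_i} - 2\int\chi_{E_i}\chi_{E_\infty} + \int\chi_{E_\infty}$. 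This is more elementary and self-contained, avoiding the GMT compactness machinery, at the cost of being slightly longer.
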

\begin{proof}
The weak convergence follows from the following decomposition
\begin{equation}
\label{eq:boundary current decomposition}
    \partial^\theta\Omega = (1-\cos\theta)\cdot \partial\Omega\lc \iM + \cos\theta \cdot \partial\Omega,
\end{equation}
and the fact that $\partial \Omega_i\lc\iM$ converges weakly as $n$-currents to $\partial\Omega_\infty \lc \iM$. (In fact, since $|\partial\Om_i\lc \iM|\to|\partial\Om_\infty\lc \iM|$ as varifolds, we know that the weak limit (as currents) of $\partial\Om_i\lc\iM$ has no support on $\pM$, and hence the weak limit must be $\partial\Omega_\infty \lc \iM$.)

As a result, we also have the weak convergence $\partial\Om_i\lc \pM \to \partial\Om_\infty \lc \pM$. The convergence under the $\mF$-distance is a direct consequence of the convergence of $\partial\Omega_i\lc\pM \to \partial\Omega_\infty\lc \pM$ in the $\M$-norm, which is the same as the weak convergence of $\partial \Omega_i\lc\pM$. (Note that the flat norm $\F$ is the same as the $\M$-norm for $n$-currents supported on $\pM$.) 
\end{proof}

The following result follows as a direct corollary of Lemma \ref{L:weak convergence of reduced currents} using compactness, c.f. \cite[Lemma 4.1]{Marques-Neves14}.
\begin{corollary}
\label{C:various closeness}
Let $\mathcal S$ be a subset of $\C(M)$ which is compact under the $\mF$-distance. Then for any $\ep>0$, there exists a $\de>0$, such that for every $\Om\in \C(M)$ and $\Om'\in\mathcal S$
\[ \mF(\Om, \Om')<\de \Longrightarrow \F(\partial^\theta\Om, \partial^\theta\Om') + \mF(|\partial^\theta\Om|, |\partial^\theta\Om'|) <\ep. \]
\end{corollary}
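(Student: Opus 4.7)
The plan is to run a standard compactness-and-contradiction argument, using the compactness of $\mathcal{S}$ to manufacture a common limit to which Lemma \ref{L:weak convergence of reduced currents} can be applied.

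First I would argue by contradiction. Suppose the statement fails; then there exist $\epsilon_0 > 0$ and sequences $\Omega_i \in \C(M)$, $\Omega_i' \in \mathcal{S}$ with $\mF(\Omega_i, \Omega_i') \to 0$ but
\[
\F(\partial^\theta \Omega_i - \partial^\theta \Omega_i') + \mF(|\partial^\theta \Omega_i|, |\partial^\theta \Omega_i'|) \geq \epsilon_0
\]
for every $i$. Since $\mathcal{S}$ is compact under the $\mF$-distance, after passing to a subsequence we may assume $\Omega_i' \to \Omega_\infty \in \mathcal{S}$ in the $\mF$-distance. The triangle inequality then also forces $\mF(\Omega_i, \Omega_\infty) \to 0$, so both sequences $\Omega_i$ and $\Omega_i'$ have the same $\mF$-limit $\Omega_\infty$.

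Next I would apply Lemma \ref{L:weak convergence of reduced currents} to each of the two convergences $\Omega_i \to \Omega_\infty$ and $\Omega_i' \to \Omega_\infty$. This yields
\[
\partial^\theta \Omega_i \to \partial^\theta \Omega_\infty, \qquad \partial^\theta \Omega_i' \to \partial^\theta \Omega_\infty
\]
both as $n$-currents in the $\F$-norm (using the decomposition \eqref{eq:boundary current decomposition} together with the fact that $\F$ and $\M$ coincide on $n$-currents supported in $\partial M$) and as varifolds in the $\mF$-metric (since the interior pieces converge as varifolds, and the boundary pieces converge in mass and hence as varifolds). Applying the triangle inequality for both $\F$ and $\mF$ produces
\[
\F(\partial^\theta \Omega_i - \partial^\theta \Omega_i') + \mF(|\partial^\theta \Omega_i|, |\partial^\theta \Omega_i'|) \longrightarrow 0,
\]
contradicting our assumption and completing the proof.

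There is no real obstacle here: the only slightly delicate point is the bookkeeping between the two notions of convergence packaged inside the $\mF$-distance on $\C(M)$ (flat for the $(n+1)$-currents, varifold $\mF$ for the boundary pieces), and checking that Lemma \ref{L:weak convergence of reduced currents} indeed delivers both $\F$-convergence of the capillary currents and $\mF$-convergence of the capillary varifolds. This is already built into the decomposition used in the proof of that lemma, so the corollary really is a direct consequence of it together with sequential compactness of $\mathcal{S}$.
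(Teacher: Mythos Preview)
Your proof is correct and follows exactly the approach the paper indicates: the paper simply states that the result ``follows as a direct corollary of Lemma \ref{L:weak convergence of reduced currents} using compactness, c.f.\ \cite[Lemma 4.1]{Marques-Neves14},'' and you have written out precisely this standard compactness-and-contradiction argument. The only detail worth noting is that the $\F$-convergence of $\partial^\theta\Omega_i$ comes most cleanly from writing $\partial^\theta\Omega = \partial\Omega - (1-\cos\theta)\,\partial\Omega\lc\pM$, so that one term converges in $\F$ directly from $\F(\Omega_i-\Omega_\infty)\to 0$ and the other in $\M$ (hence $\F$) on $\pM$; your parenthetical remark covers this.
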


Note that $\partial^\theta: \C(M)\to \mathcal R_n(M)$ is continuous map from $\C(M)$ (under the $\mF$-distance topology) to the space of $n$-dimensional rectifiable currents $\mathcal R_n(M)$; (the image $\partial^\theta\Omega$ is not integer rectifiable as it has fractional coefficient along $\pM$). Clearly, the operation $\partial^\theta$ commutes with pushing forward by any boundary preserving diffeomorphisms, which we record as:
\begin{proposition}
\label{L:communicates lemma}
Let $f: M\to M$ be a diffeomorphism of $M$ such that the restriction $f: \pM\to\pM$ is a diffeomorphism of $\pM$. Then for any $\Om\in\C(M)$
\[ \partial^\theta f(\Omega) = f(\partial^\theta\Omega). \]
\end{proposition}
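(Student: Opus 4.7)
The plan is to unwind the definition $\partial^\theta\Omega = \partial\Omega\lc\iM + \cos\theta\cdot\partial\Omega\lc\pM$ and check that each of the two summands commutes with $f_*$ separately. The key ingredients are two standard compatibilities of pushforward by a smooth map: commutativity with the boundary operator on integral currents, and commutativity with restriction to a Borel set that is mapped onto itself by the map.

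First I would recall that $f$ is a diffeomorphism of $M$ with $f(\pM)=\pM$, hence also $f(\iM)=\iM$. For integral currents, pushforward commutes with $\partial$, so $\partial f(\Omega)=f(\partial\Omega)$. Next, for any rectifiable current $T$ on $M$ and any Borel set $E\subset M$ with $f(E)=E$, one has $f(T\lc E)=f(T)\lc E$: indeed, the restriction $T\lc E$ is characterised by pairing with forms supported in $E$, and since $f^{-1}(E)=E$ for $E\in\{\iM,\pM\}$, the change-of-variables identity $\langle f(T\lc E),\omega\rangle=\langle T\lc E,f^*\omega\rangle=\langle T,\chi_E f^*\omega\rangle$ equals $\langle f(T),\omega\lc E\rangle=\langle f(T)\lc E,\omega\rangle$.

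Applying these two facts with $T=\partial\Omega$ and $E=\iM$ respectively $E=\pM$ gives
\begin{equation*}
f(\partial\Omega\lc\iM)=f(\partial\Omega)\lc\iM=\partial f(\Omega)\lc\iM,
\end{equation*}
\begin{equation*}
f(\partial\Omega\lc\pM)=f(\partial\Omega)\lc\pM=\partial f(\Omega)\lc\pM.
\end{equation*}
Adding the first identity to $\cos\theta$ times the second and using linearity of $f_*$ yields
\begin{equation*}
f(\partial^\theta\Omega)=f(\partial\Omega\lc\iM)+\cos\theta\cdot f(\partial\Omega\lc\pM)=\partial f(\Omega)\lc\iM+\cos\theta\cdot\partial f(\Omega)\lc\pM=\partial^\theta f(\Omega),
\end{equation*}
which is the claim.

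There is no real obstacle here; the only thing that needs care is the observation that $f(\iM)=\iM$ and $f(\pM)=\pM$ (which uses that $f$ is a boundary-preserving diffeomorphism, not merely a map sending $\pM$ into $\pM$), so that the restriction operators $\lc\iM$ and $\lc\pM$ really do commute with $f_*$.
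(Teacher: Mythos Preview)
Your proof is correct. The paper does not actually give a proof of this proposition; it simply introduces it with the phrase ``Clearly, the operation $\partial^\theta$ commutes with pushing forward by any boundary preserving diffeomorphisms, which we record as:'' and then states the result. Your explicit verification---using $\partial f_*=f_*\partial$ together with $f_*(T\lc E)=(f_*T)\lc E$ for $E\in\{\iM,\pM\}$ since $f(E)=E$---is exactly the routine check the authors are taking for granted.
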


\vspace{1em}
The {\em first variation formula} for $\A$ along $X\in \X(M)$ is (see \cite[16.2]{Simon83}) 
\begin{equation}
\label{E: 1st variation for A}
\begin{split}
\de\A\vert_{\Om}(X)
     & = \int_{\partial\Om \lc \iM}\Div_{\partial \Om}X d\mu_{\partial\Om} + \cos\theta\cdot \int_{\partial\Om\lc \pM}\Div_{\partial\Om}X d\mu_{\partial\Om} - c \int_{\partial\Om} \langle X,\nu\rangle \, d\mu_{\partial\Om} \\
     & = \int_{\partial\Omega}\Div_{\partial \Om}X d\mu_\theta - c \int_{\partial\Om} \langle X,\nu\rangle \, d\mu_{\partial\Om},
\end{split}
\end{equation}
where $\nu= \nu_{\partial \Om}$ is the outward unit normal on $\partial \Om$. 

Finally, we recall the following classical regularity result for $\A$-minimizing currents.
\begin{theorem}[\cite{Taylor77},\cite{DePhilippi-Maggi15}]\label{theorem.regularity.of.minimizers}
    Suppose $M^3$ is a smooth manifold with boundary, $U$ a relatively open subset of $M$, and $\Omega\in \C(M)$ minimizes $\A$ in $U$. Let $\Sigma=\partial\Omega\cap \mathring{M}$. Then $\Sigma$ is an embedded surface in $U$. Moreover, near each point on $\overline \Sigma\cap \partial M$, $\Sigma$ is properly embedded and meets $\partial M$ transversely at angle $\theta$.
\end{theorem}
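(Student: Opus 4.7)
The plan is to treat interior and boundary regularity separately. For the interior, I first observe that variations supported in $U\cap\mathring M$ leave the wetting term $\cos\theta\cdot \mH^n(\partial\Omega\lc\pM)$ unchanged, so $\Omega$ is a $(|c|,r_0)$-almost-minimizer of perimeter in $U\cap\mathring M$. Classical interior regularity for almost-minimizers (Tamanini; Almgren's extension of De Giorgi) then gives $\Sigma\cap \mathring M$ as a $C^{1,\alpha}$ hypersurface, smooth away from a singular set of Hausdorff codimension at least seven. Since $n=2$, the singular set is empty, and elliptic bootstrap for $H\equiv c$ upgrades the regularity to $C^\infty$.

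For the boundary part, fix $p\in\overline\Sigma\cap\pM$ and work in Fermi coordinates about $p$ so that $\pM$ is modelled by a flat hyperplane up to $O(r)$ errors. First I would establish a monotonicity formula for $e^{\Lambda r}r^{-n}\|\partial^\theta\Omega\|(\wt B^+_r(p))$ using a capillary dilation vector field tangential along $\pM$; this yields uniform density bounds and compactness of the rescalings $\Omega_r := r^{-1}(\Omega-p)$. Along a subsequence, $\Omega_r$ converges to a Caccioppoli set $\bC\subset\R^3_+$ whose capillary boundary is an $\A$-minimizing tangent cone under the flat capillary functional on $\R^3_+$ (the volume term scales away).

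The core step is then the classification of minimizing capillary cones: any such cone through the origin meeting $\partial\R^3_+$ with contact angle $\theta$ is a half-plane inclined at angle $\theta$. Since $\bC$ is a smooth, stable capillary minimal surface with Euclidean area growth, this classification can be obtained by a direct analysis of the link — the only stable geodesic arc in $\mathbb{S}^2_+$ meeting the equator at angle $\theta$ is a great semicircle — or alternatively by invoking the stable Bernstein-type Theorem \ref{thm:bernstein-intro}. Once the tangent cone is a flat half-plane, the epiperimetric inequality of Taylor \cite{Taylor77} (for $\theta=\pi/2$) and De Philippis-Maggi \cite{DePhilippi-Maggi15} (general $\theta$) provides decay of excess, hence uniqueness of the tangent cone attained at a H\"older rate. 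This yields transversal $C^{1,\alpha}$ regularity of $\Sigma$ up to $\pM$ with contact angle $\theta$, after which standard Schauder estimates for the oblique Neumann system $H=c$, $\langle\nu,\overline\eta\rangle=\cos\theta$ give smoothness.

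The main obstacle I expect is the epiperimetric/excess decay step, which is the analytic heart of \cite{Taylor77} and \cite{DePhilippi-Maggi15}: one must show that small excess at scale $r$ decays by a definite factor at scale $\lambda r$ for some fixed $\lambda\in(0,1)$, and for general $\theta\ne\pi/2$ this requires careful handling of the asymmetric boundary contribution to the competitor construction. A secondary subtlety is that the capillary functional is not conformally invariant across $\pM$, so care is needed both in choosing the monotonicity vector field and in controlling error terms from ambient boundary curvature; these refinements are already present in the free boundary case ($\theta=\pi/2$) treated by Gr\"uter--Jost, and extend with modifications to the general capillary case.
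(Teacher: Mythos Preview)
The paper does not give its own proof of this statement: Theorem~\ref{theorem.regularity.of.minimizers} is recorded as a classical result, attributed to Taylor \cite{Taylor77} and De Philippis--Maggi \cite{DePhilippi-Maggi15}, and is invoked later (e.g.\ in Proposition~\ref{L:regularity of capillary replacement}) as a black box. So there is no in-paper argument to compare your proposal against.

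That said, your outline is a faithful high-level summary of the strategy in the cited references, particularly \cite{DePhilippi-Maggi15}: interior regularity via the almost-minimizer framework (correct, and the codimension-seven bound indeed empties the singular set when $n=2$), then at the boundary monotonicity, blow-up, classification of capillary tangent cones as half-planes, and an excess-decay mechanism to obtain $C^{1,\alpha}$ contact followed by Schauder bootstrap. You are right that the epiperimetric/excess-decay step is the analytic heart and is not something one can fill in by hand; it is precisely what those papers supply. One minor remark: Taylor's original argument in \cite{Taylor77} is phrased in the $(\mathbf{M},\varepsilon,\delta)$-minimal-set framework rather than via an epiperimetric inequality per se, so your description aligns more closely with the De Philippis--Maggi treatment. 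Also, invoking Theorem~\ref{thm:bernstein-intro} for the cone classification is circular in spirit within this paper (that theorem is developed here for the min-max regularity theory, not as input to the minimizer regularity, which is prior); the direct link analysis you mention first is the appropriate route for minimizing cones in this dimension.
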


\subsection{Boundary maximum principle and Monotonicity Formula}
Suppose $\Sigma^n\subset M^{n+1}$ is a capillary CMC hypersurface with constant contact angle $\theta$ and mean curvature $c$. Suppose $p\in \partial \Sigma$, and that $\Sigma$ is properly embedded in a neighborhood $U$ of $p$. Let $W\subset \partial U$ be the region separated by $\partial \Sigma$ inside $\partial M$, and that $\nu_\Sigma$ points outward $W$. Define a varifold $V=|\Sigma|+\cos \theta|W|$. Then $V$ is a varifold with bounded first variation and free boundary (compare to \cite[Definition 2.1]{Li-Zhou16}). Denote $\rho$ the distance to $p$ in $M$. Then there exist $\delta,\alpha,\rho_0>0$, such that the quantity (see \cite[(2.6)]{Simon1980capillary})
\begin{equation}\label{eq.monotonicity}
    \exp(\delta\rho^\alpha)\frac{\|V\|(B_\rho(p))}{\rho^n}\text{ is increasing in } \rho\in (0,\rho_0).
\end{equation}

In general, if $\Omega$ is a Caccioppoli set in $M$ and is stationary for $\A$ (see Definition \ref{D:capillary boundary varifold}), $V=|\partial^\theta \Omega|$, and $p\in \spt(\partial \Omega\lc \mathring{M})\cap \partial M$, then there exist $\delta,\alpha,\rho_0>0$ such that \eqref{eq.monotonicity} holds.

We will need a maximum principle for free boundary varifolds with $c$-bounded first variation, which follows from the proof of \cite[Theorem 1.1]{Li-Zhou-MP} with trivial modifications - one may use precisely the same argument, noting that (in their notation) $\tr_P A^{S'_s}>c$.

\begin{theorem}
\label{thm:vfld-mp}
Suppose $V\in \mathcal{V}_n(M)$ is a free boundary varifold with $c$-bounded first variation in a relatively open subset $U\subset M$, where $c\geq 0$. Suppose $K\subset\subset U$ is a smooth, relatively open connected subset such that 
\begin{enumerate}
\item $\partial_{rel}K$ meets $\partial M$ orthogonally;
\item the mean curvature of $\partial_{rel}K$ with respect to the outward normal is greater than $c$;
\item $\spt\|V\|\subset \Clos(K)$.
\end{enumerate}
Then $\spt\|V\|\cap \partial_{rel}K=\emptyset$. 
\end{theorem}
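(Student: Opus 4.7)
Following the argument of \cite[Theorem 1.1]{Li-Zhou-MP} essentially verbatim, the plan is to argue by contradiction: suppose $p \in \spt\|V\| \cap \partial_{rel}K$. We construct an equidistant foliation of a one-sided collar of $\partial_{rel}K$ inside $K$ whose leaves meet $\partial M$ orthogonally, and derive a contradiction via a first-variation test.

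Concretely, let $r$ be a signed-distance-type function to $\partial_{rel}K$, negative inside $K$, with level sets $S'_s = \{r = -s\}$ foliating the collar for $s \in [0, s_0]$; the orthogonal meeting of $\partial_{rel}K$ with $\partial M$ allows such an $r$ to be chosen with $\nabla r$ tangent to $\partial M$ along $\partial M$, so that every $S'_s$ meets $\partial M$ orthogonally. Continuity combined with the strict inequality in hypothesis~(2) then furnishes $\eta > 0$ and a uniform $A_0$ with $H^{S'_s} \geq c + \eta$ and $|A^{S'_s}| \leq A_0$ on the foliation, once $s_0$ is small enough. I would then test the first variation of $V$ against the admissible vector field $X := -\psi(r)\nabla r$, where $\psi \geq 0$ is a smooth compactly supported cutoff satisfying $\psi'(r) \geq A_0\,\psi(r)$ on $\{r \leq 0\}$; for instance $\psi(r) = (s_0 + r)^m$ on $[-s_0, 0]$ with $m \geq A_0 s_0$, cut off smoothly on $(0, s_0)$. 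Using $\nabla^2 r(\nabla r, \cdot) = 0$ and $\nabla^2 r|_{TS'_s} = A^{S'_s}$, a direct calculation gives
\[
\Div_P X = -\psi'(r)\sin^2\theta_P - \psi(r)\bigl[H^{S'_s} - \sin^2\theta_P\, A^{S'_s}(\sigma, \sigma)\bigr],
\]
where $\theta_P = \angle(\nu_P, \nabla r)$ and $\sigma$ is the unit tangential projection of $\nu_P$ onto $TS'_s$.

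Feeding this into $\delta V(X) \geq -c\int|X|\, d\|V\|$ and rearranging yields
\[
\int \psi(r)\bigl[H^{S'_s} - c\bigr]\, d\|V\| + \int \sin^2\theta_P\bigl[\psi'(r) - \psi(r)\, A^{S'_s}(\sigma, \sigma)\bigr]\, dV \leq 0.
\]
On $\spt V \subset \{r \leq 0\}$ both integrands are non-negative: the first because $H^{S'_s} \geq c + \eta > c$, the second because $\psi' \geq A_0\,\psi \geq \psi\, |A^{S'_s}(\sigma, \sigma)|$ by construction. Moreover, the first is strictly positive near $p \in \spt\|V\|$ since $\psi(0) > 0$, contradicting the displayed inequality.

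The essential modification from the stationary case in \cite{Li-Zhou-MP} is precisely the replacement of the strict mean-convexity $H^{S'_s} > 0$ by $H^{S'_s} > c$, so that the relevant strict trace inequality (in their notation) reads $\tr_P A^{S'_s} > c$; everything else proceeds identically. I expect the only point requiring care to be the choice of cutoff $\psi$ with $\psi' \geq A_0\,\psi$, compactly supported and non-negative, for which the power-function template above is routine.
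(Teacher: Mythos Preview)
Your proposal is correct and is precisely the approach the paper takes: the paper simply cites \cite[Theorem~1.1]{Li-Zhou-MP} and remarks that the only modification needed is that (in their notation) $\tr_P A^{S'_s}>c$, which is exactly the substitution you identify. You have filled in the details that the paper omits, but the argument is the same.
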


\subsection{Almost properly embedded capillary CMC surfaces}
\begin{definition}
Suppose $\Sigma^n\subset M^{n+1}$ is a smooth, immersed, two-sided surface with unit normal vector $\nu$ such that $\partial \Sigma\subset \partial M$. We say that $\Sigma$ is a capillary CMC surface if the mean curvature with respect to $\nu$ is equal to $c$, and the angle between $\nu$ and $\overline{\eta}$ along $\partial \Sigma$ is everywhere equal to $\theta$. We further say that $\Sigma$ is a \textit{stable} capillary CMC surface if
\begin{equation}\label{equation.stability.capillary}
    Q(f):=-\int_{\Sigma}(f\Delta f+(|A|^2+\Ric(\nu,\nu))f^2)d\mH^2+\int_{\partial\Sigma\cap \partial M}f\left(\pa{f}{\eta}-Qf \right)d\mH^1\ge 0,
\end{equation}
for all $f\in C^2(\Sigma)$, where
\[Q=\frac{1}{\sin \theta}II(\overline{\nu},\overline{\nu}) - (\cot\theta) A(\eta,\eta).\]
\end{definition}

\begin{definition}[Almost proper embedding]
Let $U\subset M$ be a relatively open subset, and $\Sigma^n$ be a smooth $n$-dimensional manifold with boundary. A smooth immersion $\phi: \Sigma\rightarrow U$ is said to be {\em an almost proper embedding}, if $\phi(\partial \Sigma)\subset \partial M$, and at any point $p\in \phi(\Sigma)$, the following holds:
\begin{enumerate}
    \item If $p\in \phi(\partial \Sigma)$, there exists a small neighborhood $W\subset U$ of $p$, such that:
    \begin{itemize}
        \item $\Sigma\cap \phi^{-1}(W)$ is a disjoint union of connected components $\sqcup_{i=1}^l \Sigma_i$;
        \item for each $i=1,\cdots,l$, $\Sigma_i\cap \partial \Sigma\ne \emptyset$, and $\phi(\Sigma_i)$ is a proper embedding in $W$.
        \item for each $i$, any other component $\phi(\Sigma_j)$, $j\ne i$, lies on one side of $\phi(\Sigma_i)$ in $W$.
    \end{itemize}
    \item If $p\in \phi(\mathring{\Sigma})$, there exists a small neighborhood $W\subset U$ of $p$, such that:
    \begin{itemize}
        \item $\Sigma\cap \phi^{-1}(W)$ is a disjoint union of connected components $\sqcup_{i=1}^l \Sigma_i$;
        \item for each $i=1,\cdots, l$, $\Sigma_i\subset\mathring{\Sigma}$, and $\phi(\Sigma_i)$ is an embedding in $W$;
        \item for each $i$, any other component $\phi(\Sigma_j)$, $j\ne i$, lies on one side of $\phi(\Sigma_i)$ in $W$.
    \end{itemize}
\end{enumerate}
\end{definition}

With slight abuse of notation, we will denote $\phi(\Sigma)$, $\phi(\mathring{\Sigma})$, $\phi(\partial \Sigma)$ by $\Sigma$, $\mathring{\Sigma}$, $\partial \Sigma$, respectively. From the definition, we see that $\partial\Sigma\subset \partial M$, but it could happen that $\mathring{\Sigma}\cap \partial M\ne\emptyset$. We denote the subset of points in $\Sigma$ where $\Sigma$ fails to be embedded by $\mS(\Sigma)$. Finally, define $\mathcal{T}(\Sigma) = \mathring{\Sigma} \cap \pr M$. Note that $\mathcal{S}(\Sigma)$ describes the \textit{self-touching} set, while $\mathcal{T}(\Sigma)$ describes the \textit{barrier-touching} set.

We will call $\Sigma\setminus \mS(\Sigma)$ the regular set, and denote it by $\mR(\Sigma)$. Consider a point $p\in \mS(\Sigma)$, and $\{\Sigma_j\}$ the collection of components meeting at $p$. From its definition, if $p\in \mathring{\Sigma}$, then $\{\Sigma_j\}$ meet tangentially at $p$; if $p\in \partial \Sigma$, then the $\{\partial \Sigma_j\}$, as hypersurfaces in $\partial M$, meet tangentially at $p$. 
In particular, the tangent cone of $\Sigma$ at $p$ is a collection of half spaces through $p$ containing a common subspace $\R^{n-1}$.

\begin{definition}[Almost embedded capillary boundary]
\begin{enumerate}
    \item An almost embedded hypersurface $\Sigma\subset U$ is said to be a {\it boundary} if there exists an open subset $\Omega\subset \C(U)$, such that $\Sigma=\partial \Omega\lc \mathring{U}$ in the sense of currents;
    \item The outer unit normal $\nu_\Sigma$ of $\Sigma$ is the choice of the unit normal of $\Sigma$ which points outside of $\Omega$ along the regular part $\mR(U)$;
    \item $\Sigma$ is called a {\it stable capillary boundary} if $\Sigma$ is a boundary, and the associated $\Omega\in \C(U)$ satisfied $\td{^2}{t^2}|_{t=0}\A(\phi_t(\Omega))\ge 0$ for any $\phi_t$ generated by $X\in \X_{tan}(U)$.
\end{enumerate}
In either case, we call a connected component of $\Sigma\cap \phi^{-1}(W)$ a \textit{sheet} near $p$.
\end{definition}

Observe that if $\Sigma=\partial\Omega\lc \mathring{M}$ is an almost embedded capillary CMC boundary, then at any point $p\in \partial \Sigma$, the tangent plane to each properly embedded sheet is a half plane meeting $T_p (\partial M)$ at constant angle $\theta$. By the strong maximum principle, there can exist at most two such sheets. Also, $\td{^2}{t^2}|_{t=0}\A(\phi_t(\Omega))\ge 0$ is equivalent to \eqref{equation.stability.capillary}, where $f=\bangle{X,\nu}$ is the normal component of the variation. In particular, each sheet near $p$ is properly embedded and stable - that is, \eqref{equation.stability.capillary} is satisfied. For a derivation of the second variation formula, see the Appendix of \cite{RosSouam97capillarystability}.

We proceed to prove curvature estimates for capillary surfaces. We start with the following stable Bernstein theorem for capillary minimal surfaces in $\R^3$.
\begin{theorem}[$=$ Theorem \ref{thm:bernstein-intro}]\label{theorem.stable.bernstein}
    Let $\R^3_+$ denote the half space $\{(x_1,x_2,x_3):x_1\ge 0\}$, and suppose $\Sigma\subset \R^3_{+}$ is a properly immersed, two-sided capillary minimal surface. Suppose also that $\Sigma$ has Euclidean area growth, that is, there exists some $C>0$ such that
    \[Area(\Sigma\cap B_r(0))< C r^2\]
    for any $r>0$, and that $\Sigma$ is stable for the capillary functional. Then $\Sigma$ is planar.
\end{theorem}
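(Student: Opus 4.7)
My plan is to perform a conformal change of metric on $\Sigma$ that simultaneously makes $\partial\Sigma$ totally geodesic and keeps the Gauss curvature nonnegative, then double across $\partial\Sigma$ and apply classical results on nonnegatively curved surfaces. The crucial input is a pointwise identity along $\partial\Sigma$: because $\partial\mathbb{R}^3_+=\{x_1=0\}$ is totally geodesic, the curvature vector $\vec\kappa_{\partial\Sigma}$ in $\mathbb{R}^3$ lies in $T\partial\mathbb{R}^3_+$, i.e.\ is perpendicular to $\bar\eta = -e_1$. Decomposing $\vec\kappa_{\partial\Sigma} = \kappa_g\,\eta + A(\tau,\tau)\,\nu$ in the adapted frame $(\tau,\eta,\nu)$ along $\partial\Sigma$, using minimality $A(\tau,\tau)+A(\eta,\eta)=0$, and writing $\nu = \cos\theta\,\bar\eta + \sin\theta\,\bar\nu$, $\eta=-\sin\theta\,\bar\eta+\cos\theta\,\bar\nu$, the condition $\langle \vec\kappa_{\partial\Sigma},\bar\eta\rangle=0$ yields
\[
\kappa_g = -\cot\theta\cdot A(\eta,\eta) = Q\quad\text{on }\partial\Sigma.
\]
Since $\Ric \equiv 0$ in $\mathbb{R}^3$, the capillary stability inequality becomes
\[
\int_\Sigma |A|^2 f^2 + \int_{\partial\Sigma}\kappa_g f^2 \leq \int_\Sigma |\nabla f|^2,\qquad f\in C^2_c(\Sigma).
\]

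\textbf{Conformal change and doubling.} Applying a Fischer-Colbrie-Schoen type argument to the mixed eigenvalue problem $(\Delta+|A|^2)\phi = -\lambda \phi$ on an exhaustion $\Omega_i\Subset\Sigma$, with Robin condition $\partial_\eta \phi = Q\phi$ on $\partial \Omega_i \cap \partial\Sigma$ and Dirichlet condition on the artificial boundary, stability yields a positive function $u>0$ on $\Sigma$ satisfying
\[
\Delta u + |A|^2 u \leq 0 \text{ in } \Sigma,\qquad \partial_\eta u = Q u \text{ on } \partial\Sigma.
\]
Set $\tilde g = u^2 g$ and $v = \log u$. Using the Gauss equation $K = -\tfrac12 |A|^2$, the conformal transformation formulas give
\[
\tilde K = u^{-2}(K - \Delta v) \geq u^{-2}\bigl(\tfrac12|A|^2 + |\nabla v|^2\bigr) \geq 0,\qquad \tilde\kappa_g = u^{-1}(\kappa_g - \partial_\eta v) = u^{-1}(\kappa_g - Q) = 0.
\]
Thus $(\Sigma,\tilde g)$ has nonnegative Gauss curvature and totally geodesic boundary; its isometric double across $\partial\Sigma$ is a complete, $C^{1,1}$, boundaryless surface $(\Sigma^d,\tilde g^d)$ with $\tilde K^d\geq 0$ a.e.

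\textbf{Finishing via area growth.} To conclude $\Sigma$ is planar, I apply the stability inequality to a logarithmic cutoff $\psi(|x|)$, so that $\int |\nabla\psi|^2 \to 0$ by Euclidean area growth. The boundary term $\int_{\partial\Sigma}\kappa_g \psi^2$ is rewritten via Gauss-Bonnet on $\Sigma\cap B_R$:
\[
\int_{\partial\Sigma\cap B_R}\kappa_g = 2\pi\chi(\Sigma\cap B_R) + \tfrac12\int_{\Sigma\cap B_R}|A|^2 - \int_{\Sigma\cap \partial B_R}\kappa_g^{\mathrm{ext}} - \sum_i(\pi - \alpha_i),
\]
with the outer geodesic curvature and corner contributions controlled via coarea and the area growth hypothesis. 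This yields $\int_\Sigma |A|^2 < \infty$. From the identity $\int_\Sigma \tilde K\, dA_{\tilde g} = \int_\Sigma(\tfrac12|A|^2 + |\nabla\log u|^2)\, dA_g$, the doubled surface therefore has finite total Gauss curvature. Huber's theorem then identifies $\Sigma^d$ conformally with a closed Riemann surface minus finitely many punctures, and the standard asymptotic analysis of ends of finite-total-curvature minimal surfaces combined with Euclidean area growth forces $\Sigma$ to be a half-plane.

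\textbf{Main obstacle.} I expect the hardest step to be the finishing argument: (i) rigorously extracting $\int_\Sigma |A|^2 < \infty$, which requires careful bookkeeping of the corner and outer geodesic-curvature terms in Gauss-Bonnet and their control via Euclidean area growth, and (ii) carrying out the classical end analysis on the doubled surface whose metric $\tilde g^d$ is only $C^{1,1}$ across the seam $\partial\Sigma$. A secondary subtlety is ensuring the Fischer-Colbrie-Schoen positive $u$ exists and is sufficiently well-behaved despite the sign-indefinite Robin coefficient $Q$.
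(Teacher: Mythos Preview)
Your approach---identify $\kappa_g = Q$ along $\partial\Sigma$, produce a positive Robin solution $u$, conformally change to nonnegative curvature, double, and invoke classical surface theory---is in the spirit of the Fischer--Colbrie--Schoen method, and is close to Hong--Saturnino's independent proof (which the paper cites). The paper takes a different and more elementary route: it observes that the \emph{explicit} function $\varphi = 1 - (\cos\theta)\langle e_1,\nu\rangle$ satisfies both $\Delta\varphi + |A|^2\varphi = |A|^2$ on $\Sigma$ and the same Robin condition $\partial_\eta\varphi = Q\varphi$ on $\partial\Sigma$. Substituting $\varphi f$ into the stability inequality and integrating by parts, the boundary contributions cancel exactly, leaving $\int_\Sigma |A|^2\varphi f^2 \leq \int_\Sigma \varphi^2|\nabla f|^2$. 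The decisive advantage over an abstract positive $u$ is that $\varphi$ is \emph{pointwise bounded}, $1-|\cos\theta|\leq\varphi\leq 1+|\cos\theta|$, so one immediately obtains the boundary-free inequality $\int_\Sigma|A|^2 f^2 \leq C_\theta\int_\Sigma|\nabla f|^2$, after which the logarithmic cutoff plus area growth gives $|A|\equiv 0$ in one line. This sidesteps the existence of $u$, completeness of $u^2g$, the $C^{1,1}$ seam, and all end analysis; it also feeds directly into Schoen--Simon--Yau to give higher-dimensional versions.

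Your finishing step, as written, has a real gap. To deduce $\int_\Sigma|A|^2<\infty$ you plug the log cutoff into stability and rewrite $\int_{\partial\Sigma}\kappa_g\psi^2$ via Gauss--Bonnet on $\Sigma\cap B_R$; after absorbing the $\tfrac12\int|A|^2$ term you must bound $-2\pi\chi(\Sigma\cap B_R)$ and $\int_{\Sigma\cap\partial B_R}\kappa_g^{\mathrm{ext}}$ from above. Neither follows from Euclidean area growth alone: the Euler-characteristic term blows up if $\Sigma$ has a priori infinite topology, and the outer geodesic curvature contains a piece (the geodesic curvature of $\Sigma\cap\partial B_R$ inside the sphere) that scales with the number of components of that curve, again a topological quantity not controlled by coarea. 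Within your framework the honest route is to argue on the doubled surface directly---if $(\Sigma^d,\tilde g^d)$ is complete with $\tilde K^d\geq 0$, the soul theorem gives finite topology and Cohn--Vossen gives $\int\tilde K<\infty$, hence $\int|A|^2<\infty$---but this requires first proving completeness of $\tilde g^d$ (that $u$ does not decay to zero at infinity), which you have not addressed and which is itself nontrivial for the Robin eigenfunction construction.
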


\begin{proof}
    Let $\overline{\nabla}$ denote the connection given by the Euclidean metric, and $\nabla$ the connection on $\Sigma$. The vector field $-e_1$ is the outward unit normal vector field to $\R^3_+$ on $\{x_1=0\}$. Let $w=\bangle{e_1,\nu}$. Notice that $w$ is a Jacobi function on $\Sigma$. Moreover, along $\{x_1=0\}$, $w=\cos\theta$, and $\partial_\eta w=\partial_\eta \bangle{e_1,\nu}=\bangle{e_1,\nabla_\eta \nu}=A(\eta,\eta)\bangle{e_1,\eta}=\sin\theta A(\eta,\eta)$. Set $\varphi=1-w\cos\theta$. Then $\varphi|_{\partial \Sigma}=\sin^2\theta$, and $\partial_\eta \varphi=-\cos\theta \sin\theta A(\eta,\eta)$. Thus, we have that
    \begin{equation}
        \begin{cases}
            \Delta \varphi + |A|^2\varphi = |A|^2 \quad &\text{in }\Sigma,\\
            \partial_{\eta} \varphi= Q \varphi \quad &\text{on }\partial \Sigma.
        \end{cases}
    \end{equation}
    Here $Q=-\cot\theta A(\eta,\eta)$ on $\partial \Sigma$. 
    
    For any compactly supported function $f\in C^1(\R^3_+)$, plug $\varphi f$ into the stability inequality:
    \[0\le -\int_\Sigma \varphi f (\Delta+|A|^2)(\varphi f) + \int_{\partial \Sigma }\varphi f (\partial_\eta-Q)(\varphi f). \]
    Now
    \[\int_{\partial \Sigma} \varphi f (\partial_\eta -Q)\varphi f = \int_{\partial \Sigma} \varphi f [(\partial_\eta \varphi )f -Q\varphi f +\varphi \partial_\eta f]=\int_{\partial \Sigma}\varphi^2 f \partial_\eta f.\]
    Perform integration by parts for the other term:
    \begin{equation*}
        \begin{split}
            \int_\Sigma \varphi f (&\Delta+|A|^2)(\varphi f)\\
                &= \int_\Sigma \varphi f [(\Delta \varphi) f +\varphi \Delta f +2\bangle{\nabla \varphi,\nabla f}+|A|^2 \varphi f ]\\
                &= \int_\Sigma \varphi f(|A|^2 f+\varphi \Delta f+2\bangle{\nabla \varphi, \nabla f })\\
                &= \int_\Sigma |A|^2\varphi f^2 + \int_\Sigma \varphi^2 f\Delta f+\frac{1}{2}\int_\Sigma \bangle{\nabla (\varphi^2),\nabla (f^2)}\\
                &= \int_\Sigma |A|^2 \varphi f^2+ \int_\Sigma \varphi^2 f\Delta f +\int_{\partial \Sigma} \varphi^2 f\partial_\eta f- \int_\Sigma \varphi^2 (f\Delta f+|\nabla f|^2).
        \end{split}
    \end{equation*}
    
    Thus the stability inequality gives
    \begin{equation}\label{equation.consequence.of.stability}
        \int_\Sigma |A|^2\varphi f^2 \le \int_\Sigma \varphi^2 |\nabla f|^2
    \end{equation}
    for all compactly supported Lipschitz functions $f$. Since the function $\varphi$ satisfies the bound $1-|\cos \theta|\le \varphi\le 1+|\cos\theta|$,
    \eqref{equation.consequence.of.stability} gives
    \begin{equation}\label{equation.almost.stability}
        \int_\Sigma |A|^2 f^2 \le C_\theta \int_\Sigma |\nabla f|^2
    \end{equation}
    with $C_\theta=\frac{(1+|\cos\theta|)^2}{1-|\cos\theta|}$. Now we may take $f$ to be the standard logarithmic cutoff function and conclude that $|A|\equiv 0$.
\end{proof}

\begin{remark}
\label{rmk:souam}
Our use of the test functions above was inspired by their use in \cite{Ainouz-Souam16} due to Ainouz and Souam. During the completion of this article, Hong-Saturnino \cite{HS21} and Souam \cite{souam2021stable} also used the same test functions to study stable capillary surfaces with planar boundary. In particular, Theorem \ref{theorem.stable.bernstein} was proven independently in \cite{HS21} using Fischer-Colbrie--Schoen techniques. (The results of \cite{souam2021stable} strictly only apply to compact surfaces.) Our approach to Theorem \ref{theorem.stable.bernstein} is simpler than that of \cite{HS21} and also admits generalisations to higher dimensions (for certain contact angles) via Schoen-Simon-Yau techniques, detailed in Appendix \ref{sec:bernstein}. 
\end{remark}

Using Theorem \ref{theorem.stable.bernstein} and a blow-up argument analogous to \cite[Theorem 3.2]{GuangWangZhou2018compactness}\footnote{The subtlety here is that, a priori, the blow-up points may converge onto the interior touching set; however, since the $\partial M$ converges smoothly to a plane, this cannot happen due to the maximum principle. See \cite{GuangWangZhou2018compactness}.}, we have the following curvature estimate for stable capillary constant mean curvature surfaces.

\begin{theorem}\label{theorem.curvature.estimate}
Let $M^3$ be a Riemannian manifold with boundary, $U\subset M$ be an open subset. Suppose $(\Sigma,\partial \Sigma)\subset (U,\partial M\cap U)$ is a smooth properly immersed stable two-sided capillary constant mean curvature surface with $\Area(\Sigma)<C_0$. Then
\[|A_\Sigma|^2(x)\le \frac{C_1}{\dist^2_M(x,\partial U)} \quad\text{for all }x\in \Sigma\]
where $C_1>0$ is a constant depending on $C_0, M, U$, the mean curvature of $\Sigma$ and the angle between $\Sigma$ and $\partial M$.
\end{theorem}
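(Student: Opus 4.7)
The plan is to argue by contradiction using a blow-up procedure in the spirit of Choi--Schoen and Schoen--Simon--Yau, replacing the classical Bernstein theorem with Theorem \ref{theorem.stable.bernstein} near the boundary. Suppose the estimate fails. Then one can extract a sequence of smooth, properly immersed, stable two-sided capillary CMC surfaces $\Sigma_i \subset U$ (with contact angle $\theta$, mean curvature $c$, and area bounded by $C_0$) along with points $x_i \in \Sigma_i$ such that
\[
|A_{\Sigma_i}|^2(x_i)\,\dist_M^2(x_i,\partial U) \;\longrightarrow\; \infty.
\]

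First, I would apply a standard point-picking lemma to replace $x_i$ by new points $\tilde x_i\in \Sigma_i$ so that $\lambda_i := |A_{\Sigma_i}|(\tilde x_i)\to\infty$, $\lambda_i\dist_M(\tilde x_i,\partial U)\to\infty$, and $|A_{\Sigma_i}| \le 2\lambda_i$ on the intrinsic ball of some radius $r_i/\lambda_i$ with $r_i\to\infty$. Next, rescale the ambient manifold $M$ around $\tilde x_i$ by the factor $\lambda_i$. The rescaled ambient metrics converge smoothly on compact sets to the flat metric on $\mathbb{R}^3$, and $\partial M$, viewed in the rescaled coordinates, either escapes to infinity (when $\lambda_i\dist_M(\tilde x_i,\partial M)\to\infty$) or converges smoothly to an affine plane. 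The rescaled mean curvature is $c/\lambda_i \to 0$, and the area bound $C_0$ translates, by the monotonicity formula \eqref{eq.monotonicity}, into a Euclidean area growth bound uniform in $i$.

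The central step is to extract a smooth subsequential limit $\Sigma_\infty$ of the rescaled surfaces, which will be a complete, properly immersed, two-sided, stable \emph{minimal} surface either in $\mathbb{R}^3$ or, in the boundary case, capillary in $\mathbb{R}^3_+$ with contact angle $\theta$. Once this is established, the classical stable Bernstein theorem (interior case) or Theorem \ref{theorem.stable.bernstein} (boundary case) forces $\Sigma_\infty$ to be planar, contradicting $|A_{\Sigma_\infty}|(0)=1$, which holds by the normalization.

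The main obstacle lies exactly in this extraction: a priori, distinct sheets of $\Sigma_i$ could collapse together in the blow-up, and the interior of $\Sigma_i$ could drift onto $\partial M$, producing a limit that is not well-controlled near the rescaled barrier. This is the subtlety flagged in the footnote. Since the rescaled ambient boundary converges smoothly to a hyperplane in $\mathbb{R}^3_+$ and the rescaled surfaces have uniformly bounded curvature, any such limiting touching sheets would have to be minimal and tangent to a totally geodesic boundary, contradicting Theorem \ref{thm:vfld-mp} applied to the limit configuration (or, on the interior, the classical Hopf maximum principle for CMC hypersurfaces). Ruling out this touching is what forces the convergence to be graphical and smooth, and gives a well-defined two-sided limit to which the Bernstein-type theorems apply. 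The remaining details are routine, following \cite[Theorem 3.2]{GuangWangZhou2018compactness} essentially verbatim after this point.
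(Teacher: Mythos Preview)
Your proposal is correct and follows essentially the same approach as the paper. The paper does not give a detailed proof either: it simply says the estimate follows from Theorem \ref{theorem.stable.bernstein} together with a blow-up argument analogous to \cite[Theorem 3.2]{GuangWangZhou2018compactness}, and in a footnote flags exactly the subtlety you identified --- that the blow-up points might converge onto the interior touching set $\mathring{\Sigma}\cap\partial M$ --- and resolves it the same way, by noting that since the rescaled $\partial M$ converges smoothly to a plane, the maximum principle rules this out.

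One small remark: your appeal to Theorem \ref{thm:vfld-mp} in the touching discussion is not quite the right citation. That theorem is a varifold barrier principle for strictly mean-convex obstacles, whereas the limiting barrier here is a totally geodesic plane. The correct tool (which you also mention) is the classical strong maximum principle/Hopf lemma for smooth minimal hypersurfaces: a minimal surface in $\mathbb{R}^3_+$ tangent to $\partial\mathbb{R}^3_+$ from the interior at an interior point must coincide with the plane, hence have zero curvature, contradicting the normalization $|A_{\Sigma_\infty}|(0)=1$. With that adjustment, your argument matches the paper's.
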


Next, we give a precise local description of $\mS(\Sigma)$. We start with the following a maximum principle.

\begin{lemma}[Strong maximum principle for embedded capillary CMC surfaces]\label{lemma.strong.max.principle.embedded}
Suppose $U\subset M$ is an open subset, $\Sigma_i\subset U$, $i=1,2$, are two connected properly embedded constant mean curvature surfaces, with the same mean curvature $c> 0$ with respect to unit normal $\nu_i$ and the same contact angle $\theta$ along $\partial M$. Assume that $\Sigma_2$ lies on one side of $\Sigma_1$ and that $p \in \Sigma_1 \cap  \Sigma_2 \neq \emptyset$. Then we have the following:
\begin{enumerate}
    \item If $\nu_1(p)=\nu_2(p)$, then $\Sigma_1=\Sigma_2$;
    \item Let $U_1\subset U$ be the region divided by $\Sigma_1$ where $\nu_1$ points outward of $U_1$.  Suppose that $\Sigma_2$ lies in the closure of $U_1$. 
    If $p$ is an interior point, then $\Sigma_1 = \Sigma_2$. 
    If $p$ is a boundary point, then either $\Sigma_1=\Sigma_2$, or $\nu_2(p)$ is the reflection of $\nu_1(p)$ across an axis parallel to $T_p\pr M$. 
    

\end{enumerate}
\end{lemma}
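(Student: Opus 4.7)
The plan is to reduce both parts to the classical Hopf strong maximum principle for linear elliptic PDEs, supplemented by a second-order geometric obstruction in one subcase of Part (2).

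For Part (1), assuming $\nu_1(p)=\nu_2(p)$, I would choose local coordinates near $p$ making $T_p\Sigma_1$ horizontal with the common normal pointing upward, and write both surfaces as graphs $z=u_i(x,y)$ with $u_i(0)=0$ and $\nabla u_i(0)=0$ (the domain being a full disk if $p\in\mathring{M}$ or a half-disk if $p\in\partial M$, with the flat side corresponding to $\partial\Sigma_i\subset\partial M$). Because the mean curvature and normal orientation agree at $p$, both $u_i$ satisfy the same quasilinear elliptic equation $\mathcal{M}(u_i)=c$; and if $p\in\partial M$, the capillary condition $\bangle{\nu_i,\overline{\eta}}=\cos\theta$ similarly imposes the same quasilinear oblique boundary condition on the flat side. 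The one-sidedness assumption gives, WLOG, $u_2\le u_1$, so $w:=u_1-u_2\ge 0$ with $w(0)=0$ satisfies (after subtracting the equations and BCs and linearising via the mean-value theorem) a homogeneous linear elliptic equation $Lw=0$ together with, in the boundary case, a homogeneous linear oblique BC $\mathcal{L}w=0$. Hopf's strong maximum principle---supplemented by the Hopf boundary lemma for oblique derivative problems in the boundary case---then forces $w\equiv 0$ locally, and a standard open--closed argument using connectedness promotes this to $\Sigma_1=\Sigma_2$ globally.

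For Part (2), the hypothesis $\Sigma_2\subset\overline{U_1}$ together with tangency at $p$ restricts $\nu_2(p)$. At an \emph{interior} touching point, tangency forces $\nu_2(p)=\pm\nu_1(p)$: the case $\nu_2=\nu_1$ reduces to Part (1); to rule out $\nu_2=-\nu_1$, I would write both as graphs $u_i$ over $T_p\Sigma_1$ with upward direction $\nu_1$, so that $\Sigma_2\subset\overline{U_1}$ gives $u_2\le u_1$ near $0$ and hence $w=u_1-u_2\ge 0$ attains its minimum $0$ at the interior point $0$, giving $\Delta w(0)\ge 0$ by positive semi-definiteness of $D^2 w(0)$; on the other hand, the intrinsic identity $\vec{H}=\Delta_\Sigma\vec{X}$, together with $\vec{H}_i=-c\nu_i$ and $\nu_2=-\nu_1$, yields via the graph formula $\vec{H}(0)=(0,0,\Delta u(0))$ at a critical point that $\Delta u_1(0)=-c$ and $\Delta u_2(0)=+c$, so $\Delta w(0)=-2c<0$---contradiction. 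At a \emph{boundary} touching point, the one-sidedness of $\partial\Sigma_2$ relative to $\partial\Sigma_1$ inside $\partial M$ forces $T_p\partial\Sigma_1=T_p\partial\Sigma_2$, and combining this with the contact-angle condition and $\Sigma_i\subset M$ shows that $\nu_i(p)$ admits only two possible configurations, mirror images of each other across an axis in $T_p\partial M$ (the alternative $\nu_2=-\nu_1$ being excluded for $\theta\ne\pi/2$ since it would correspond to contact angle $\pi-\theta$). Either $\nu_2(p)=\nu_1(p)$, whence Part (1) yields $\Sigma_1=\Sigma_2$, or $\nu_2(p)$ is the mirror-image vector---the exceptional conclusion.

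The main technical obstacle I anticipate is the boundary version of the strong maximum principle in Part (1), which relies on the Hopf boundary lemma for the oblique derivative problem obtained by jointly linearising the CMC equation and the capillary BC. This is classical (cf.\ Gilbarg--Trudinger) but requires careful setup---e.g.\ via Fermi coordinates along $\partial M$---to represent both surfaces as graphs over a common domain with a common flat boundary corresponding to $\partial M$, and to verify that the linearised BC is indeed oblique so the Hopf lemma applies.
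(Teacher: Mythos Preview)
Your proposal is correct and follows essentially the same approach as the paper: reduction to graphs over a common tangent plane, linearisation of the difference, and the Hopf lemma for Part~(1), together with the geometric dichotomy on $\nu_2(p)$ for Part~(2). The only cosmetic differences are that, at a boundary touching point in Part~(1), the paper reads off $\partial_1 u(0)=0$ directly from the pointwise equality $\nu_1(p)=\nu_2(p)$ and applies the ordinary Hopf lemma (rather than linearising the full capillary boundary condition to an oblique problem), and for the interior case of Part~(2) the paper simply cites \cite[Lemma~2.7]{Zhou-Zhu17}, whose content is precisely your second-order obstruction $\Delta w(0)=-2c<0$.
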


\begin{proof}

If $p$ is in the interior of $M$, then both items follow from the interior strong maximum principle as in \cite[Lemma 2.7]{Zhou-Zhu17}. Henceforth we consider the case where $p\in \pr M$. 

Take a Fermi coordinate system $\{x_i\}$ in a neighbourhood $U$ of $p$ so that $M$ is identified with $\{x_1<0\}$, and $p$ is identified with $x=0$. By a rotation, we may assume that $\nu_1(p) = (\cos \theta) e_1 + (\sin \theta) e_3,$ and write each $\Sigma_j \cap U$, as a graph $x_3 = u_j(x_1,x_2)$, for $j=1,2$.

For item (1), we assume $\nu_1(p)=\nu_2(p)$. Similar to \cite{Zhou-Zhu17}, it follows that the difference $u= u_2-u_1$ satisfies $Lu = 0$, where $L$ is a positive linear elliptic operator. We may assume without loss of generality that $u_2 >u_1$, hence $u>0$, on $\{x_1<0\}$. Writing the normals $\nu_j$ in terms of $u_j$, one can see that $\nu_1(p)=\nu_2(p)$ implies $\pr_1 u_1(0) = \pr_1 u_2(0)$, hence $\pr_1 u(0)=0$. The Hopf lemma then gives that $u=0$ on $U$, hence $\Sigma_1=\Sigma_2$ by unique continuation. 

For item (2), since $\Sigma_2$ lies in the closure of $U_1$, we see that $T_p \Sigma_2$ must lie in the negative half-space of $T_p M$ with respect to $\nu_1(p)$. Then since $\Sigma_2$ is a capillary surface, the only options for its outer normal are $\nu_2(p) = (\cos \theta) e_1 \pm (\sin \theta) e_3$. Note that $(\cos \theta) e_1 - (\sin \theta) e_3$ is precisely the reflection of $\nu_1(p)$ across the $x_3$-axis. On the other hand, if $\nu_2(p) = (\cos \theta) e_1 + (\sin \theta) e_3 = \nu_1(p)$, then item (1) gives $\Sigma_1=\Sigma_2$ as claimed. 
\end{proof}

\begin{remark}\label{remark.regular.and.singular.parts.almost.embedded.surface}
    From Lemma \ref{lemma.strong.max.principle.embedded} and its proof, we see that for any almost properly embedded capillary CMC boundary $\Sigma^n=\partial\Omega\lc \mathring{M}$ and $p\in \mS(\Sigma)$, we have the following. If $p\in \partial \Sigma$, then locally two sheets $\Sigma_1,\Sigma_2$ meet transversely at $p$; if $p\in \mathring{\Sigma}$, then there are two sheets, $\Sigma_1,\Sigma_2$, touching each other at $p$ with $\nu_1=-\nu_2$. Since locally each sheet $\Sigma_j$ is a solution to the CMC equation, we conclude, as in \cite[Lemma 2.8]{Zhou-Zhu17} that $\mS(\Sigma)$ is contained in a finite union of $1$-dimensional submanifolds
\end{remark}


\subsection{Removable singularities}

Using Theorem \ref{theorem.curvature.estimate}, we establish a removable singularity theorem for Caccioppoli sets that are stationary for the $\A$ functional and are stable away from a single point.

\begin{theorem}
\label{theorem:removal.singularity}
Let $(M,g)$ be a smooth $3$-manifold with boundary, $U$ an open subset of $M$ and $p\in \partial M\cap U$. Suppose $\Om$ is a Caccioppoli set in $U$ such that:
\begin{enumerate}
    \item  $\Sigma:=\partial\Om\lc \left(U\setminus \{p\}\right)$ is a connected, properly embedded, stable capillary constant mean curvature surface in $U\setminus \{p\}$, and $p\in \spt \Sigma$.
    \item Some tangent cone of $\Sigma$ at $p$ is a multiplicity one capillary half plane in $\R^3_+$.
\end{enumerate}
Then $\Omega$ is stationary for $\A$, and $\Sigma$ extends smoothly across $\{p\}$ as a properly embedded surface.
\end{theorem}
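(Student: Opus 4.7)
The plan is to adapt White's removable singularity technique \cite{White87space} to the capillary setting. First I would upgrade stationarity of $\Omega$ from $U\setminus\{p\}$ to all of $U$. For any $X\in\X_{tan}(U)$ and a Fermi-ball cutoff $\chi_\epsilon$ vanishing on $\widetilde B_\epsilon^+(p)$ and equal to $1$ outside $\widetilde B_{2\epsilon}^+(p)$ with $|\nabla\chi_\epsilon|\leq C/\epsilon$, one has $\delta\A|_\Omega(\chi_\epsilon X) = 0$, so it suffices to establish the quadratic area bound $\|\mu_\theta\|(\widetilde B_r^+(p)) \leq Cr^2$. This bound I would derive by combining the tangent-cone hypothesis --- giving $\|\mu_\theta\|(\widetilde B_{r_i}^+(p))/r_i^2 \to$ the density of the tangent half-plane along some sequence $r_i\to 0$ --- with the monotonicity formula \eqref{eq.monotonicity} applied on annular regions $\widetilde B_{r_i}^+(p)\setminus\widetilde B_s^+(p)$ for $s > 0$, in which the associated varifold $V = |\Sigma|+\cos\theta|W|$ genuinely has $c$-bounded free-boundary first variation. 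The cutoff error in the first variation is then $O(\epsilon)\to 0$, proving stationarity at $p$.

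With stationarity established, monotonicity extends through $r=0$ and the density $\Theta(V,p)$ is defined. By Theorem \ref{theorem.curvature.estimate}, $|A_\Sigma|(x) \leq C/\dist(x,p)$, so under the rescalings $\Sigma_r := r^{-1}(\Sigma - p)$ the second fundamental form is uniformly bounded on each annular region $\widetilde\An_{1/2,2}(0)$ in the rescaled metric. Subsequential limits as $r\to 0$ are therefore smoothly immersed, properly embedded, stable capillary minimal surfaces (the mean curvature rescales as $cr \to 0$) in $\R^3_+$ with Euclidean area growth. By Theorem \ref{theorem.stable.bernstein}, every such tangent cone is planar, and together with density and multiplicity one must be a half-plane meeting $\partial\R^3_+$ at angle $\theta$. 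A standard blow-up argument based on smooth convergence on each dyadic annulus, together with the uniform curvature bound, then yields both uniqueness of the tangent cone $\Pi$ and the pointwise decay $|A_\Sigma|(x)\cdot\dist(x,p)\to 0$ as $x\to p$.

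With this decay, in a Fermi half-ball neighborhood of $p$, $\Sigma$ is the graph of a $C^1$-small function $u$ over $\Pi$, continuous up to $0\in\partial\Pi$ with $u(0)=0$ and $\nabla u(0)=0$. The function $u$ is a bounded $C^1$ weak solution on $\Pi\setminus\{0\}$ to a quasilinear elliptic CMC equation with an oblique Neumann boundary condition along $\partial\Pi\cap\partial M$ enforcing the contact angle $\theta$. In this 2D setting, an isolated singularity at a boundary point of such an elliptic boundary value problem is removable: one obtains a $W^{1,\infty}$-extension by a capacity argument, then $C^{1,\alpha}$-regularity via the oblique boundary Harnack inequality, and finally smoothness via Schauder theory in Fermi coordinates. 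I expect the main obstacle to be the proof of tangent cone uniqueness together with the pointwise decay $|A|\cdot d\to 0$, since an Allard-type regularity theorem for capillary varifolds with $\theta\neq\pi/2$ is not readily available; the PDE removable singularity itself is standard in principle, but must be set up carefully to honor the oblique Neumann condition at the boundary endpoint $p$.
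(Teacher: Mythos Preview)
Your Step 1 (stationarity via cutoff) matches the paper; you only need the quadratic area bound along a single sequence $r_j\to 0$, which comes directly from the tangent-cone hypothesis, so the detour through monotonicity on annuli is unnecessary.

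The gap is exactly the one you flag: there is no ``standard blow-up argument'' that passes from smooth convergence on dyadic annuli plus $|A|\leq C/r$ to uniqueness of the tangent half-plane. Nothing in that data rules out slow rotation of the limiting half-plane as $r\to 0$; this is why one invokes Allard in the interior case, and as you note, no capillary Allard theorem is available for $\theta\neq\pi/2$. What you have left out is the actual content of White's method, which the paper implements in the capillary setting. Fix a scale $r_j$ at which $\Sigma_j := r_j^{-1}(\Sigma - p)$ is $C^{2,\alpha}$-graphical over the half-plane $P$ on an annulus. Using the implicit function theorem (the linearization is $v\mapsto(\Delta v,\ \sin\theta\,\partial_\eta v)$ with Dirichlet data on an outer circular arc and the oblique condition on the straight boundary segment; this is an isomorphism by mixed-boundary Schauder theory), one constructs a genuine foliation $\{\Sigma_j^\rho\}_{\rho\in[-1/2,1/2]}$ of a Fermi neighbourhood of $0$ by capillary CMC surfaces with mean curvature $cr_j$ and contact angle $\theta$, arranged so that $\partial\Sigma_j^0$ matches $\partial\Sigma_j$ on the outer arc. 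The strong maximum principle against this foliation then traps $\overline\Sigma_j$ on one side of the leaf $\Sigma_j^{\tilde\rho}$ passing through $p$. Consequently every tangent cone of $\Sigma_j$ at $p$ is a capillary half-plane lying in a fixed closed half-space of $T_pM$; there are only two such half-planes, $T_p\Sigma_j^{\tilde\rho}$ and its reflection across the line $T_p(\partial\Sigma_j^{\tilde\rho})\subset T_p(\partial M)$. Since the set of tangent varifolds at $p$ is connected and these two half-planes are isolated from one another, the tangent cone is unique. After that, $C^1$ graphicality over $P$ and elliptic regularity finish the proof, as you indicate.
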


\begin{remark}
    When $\theta=\pi/2$, 
    hypothesis (1) in Theorem \ref{theorem:removal.singularity} may be replaced by the assumption that $\Omega$ is $\A$-stationary in $U$ (in particular, without assuming stability), thanks to the classical Allard regularity theorem for free boundary stationary varifolds (\cite{GruterJost1986allard}). It is tempting to conjecture that an Allard type regularity theorem holds for capillary submanifolds with general contact angles. 
\end{remark}


\begin{proof}[Proof of Theorem \ref{theorem:removal.singularity}]
We divide the proof into several steps. 

\vspace{0.5em}
\textbf{Step 1}: Stationarity of $\Omega$ for $\A$.
\vspace{0.5em}

We first prove that $\Omega$ is stationary for the $\A$ functional in $U$. By assumption (2), there exists a sequence $\{r_j\}_{j=1}^\infty$ such that ${\eta_{r_j}}_*(\Sigma)$ converge as varifolds to a limit supported on a half planes in $\R^3$, here $\eta_{r_j}(x):=r_j^{-1}(x-p)$. In particular, for $j\gg 1$, $\|\Sigma\|(B_{r_j}(p))<2\pi r_j^2$. On the other hand, we have that $\|\partial \Omega\lc \partial M\|(B_{r_j}(p))\le \|\partial M\|(B_{r_j}(p))=\pi r_j^2$. Thus, there is $C_0$ independent of $j$ such that
\begin{equation}\label{E:density.ratio.upper.bound}
    \|\partial^\theta \Omega\|(B_{r_j}(p))<C_0r_j^2.
\end{equation}

Let $X\in \X_{tan}(U)$. Denote by $r$ the geodesic distance on $M$ to $p$. For $\ep>0$, take a function $\varphi_\ep (r)$ such that $\varphi_\ep (r)=0$ when $r\in [0,\ep^2]$, $\varphi_\ep(r)=1$ when $r\ge \ep$, and $|\varphi_\ep'(r)|<\tfrac{2}{\ep}$. Taking $\varphi_\ep (r) X$ into the first variation formula \eqref{E: 1st variation for A}, we have
\[\int_{\partial\Omega} \varphi_\ep (r) \Div_{\partial \Omega} X d\mu_\theta - c\int_{\partial \Omega} \varphi_\ep (r) \langle X,\nu\rangle d\mu_{\partial \Omega} = -\int_{\partial\Omega} \varphi'_\ep (r) \langle \nabla_{\partial \Omega} r ,X\rangle.\]
Note that
\begin{equation*}
    \left|\int_{\partial \Omega} \varphi_\ep'(r) \langle \nabla_{\partial \Omega} r,X\rangle \right| d\mu_\theta= \left|\int_{\partial \Omega\lc B_\ep (p)} \varphi_\ep'(r)\langle \nabla_{\partial \Omega} r,X\rangle d\mu_\theta \right|
    \le \frac{2\|X\|_{C_0(U)}}{\ep} \|\partial^\theta \Omega\|\left(B_\ep(p)\right).
\end{equation*}
Taking $\ep=r_j\searrow 0$ in the above inequality and using \eqref{E:density.ratio.upper.bound}, we conclude that $\delta \A_\Omega(X)=0$ for all $X\in \X_{tan}(U)$, and thus $\Omega$ is stationary in $U$.

\vspace{0.5em}
\textbf{Step 2}: Construction of a capillary CMC foliation near $p$.
\vspace{0.5em}

Denote by $H_0$ the mean curvature of $\Sigma$. Take a sequence $\{r_j\}_{j=1}^\infty$ converging to $0$ such that the rescaled surfaces $\Sigma_j:=r_j^{-1}(\Sigma-p)$ converge in the varifold sense to a capillary half-plane $P$ in $\R^3_+$ with multiplicity one. Moreover, by Theorem \ref{theorem.curvature.estimate}, $\Sigma_j\cap \left(B_1(0)\setminus B_{1/2}(0)\right)$ is properly embedded and has bounded curvature, and thus subsequentially they converge graphically smoothly to $P$ in $B_1(0)\setminus B_{1/2}(0)$ (we continue to denote the subsequence by $\Sigma_j$). Take local Fermi coordinates relative to $\partial M\subset M$. By an affine transformation, we may assume $p=(0,0,0)$, $P=\{x_3=0,x_1\ge 0\}$ and $\partial M=\{x_3=x_1\tan\theta\}.$ Denote by $M_{r_j}=r_j^{-1}(M-p)$. Take the transformed Fermi coordinates $\{x_1,x_2,x_3\}$ near $0\in M_{r_j}$ similar as before, and define $Y_{r_j}\in \X_{tan}(M_{r_j})$ to be the smooth vector field  $Y_{r_j}=\cos\theta \partial_{x_1}+\sin\theta \partial_{x_3}$.

For any fixed angle $\gamma\in (\pi/3,\pi/2)$, we describe a foliation of a neighborhood of $0$ as follows. For each $r\in (0,1)$, consider the domains 
\[T_r:=\left\{(x_1,x_2,0): 0\le x_1\le (r^2-x_2^2)^{1/2}-r\cos \gamma\right\},\]
\[C_r:=\left\{(x_1,x_2,x_3): 0\le x_1\le (r^2-x_2^2)^{1/2}-r\cos \gamma\right\}.\]
We also denote 
\[S_r=\left\{(x_1,x_2,0): x_1\ge 0, (x_1+r\cos\gamma)^2+x_2^2=r^2\right\},\quad \Gamma_r=\{(0,x_2,0):-r\sin\gamma \le x_2\le r\sin\gamma\}.\]
Clearly the set $\{S_r\}_{r\in (0,3/2)}$ gives a foliation of a neighborhood of $p$ on $P$. 

By Theorem \ref{theorem.curvature.estimate}, 
\begin{equation}\label{equation.removal.sing.1}
    r_j^{-1}\left(\Sigma\cap (C_{3r_j/2}(p)\setminus C_{r_j/2}(p))\right)\rightarrow  T_{3/2}(0)\setminus T_{1/2}(0)
\end{equation}
as $C^\infty$ graphs. In particular, there exists a $C^\infty$ function $\overline{w}_j$, such that 
\[r_j^{-1}(\Sigma\cap (C_{5r_j/4}(p)\setminus C_{4r_j/5}(p)))=\{\overline{w}_j(x)Y_{r_j}(x): x\in U_j\},\] 
where $U_j\subset P$ is an open domain that limits to $T_{5/4}\setminus T_{4/5}$. Denote by $w_j=\overline{w}_j|_{S_1}$. For $\alpha\in (0,1)$ to be specified later, it follows from \eqref{equation.removal.sing.1} that
\[\|w_j\|_{C^{2,\alpha}(S_1)}=\varepsilon_j\rightarrow 0.\]
We then extend $w_j$ to be a $C^{2,\alpha}$ function on $T_{1}$ (which we still call $w_j$) with $\|w_j\|_{2,\alpha,T_1}\le 2\varepsilon_j$. By the monotonicity formula, for $j$ sufficiently large, $\spt (\Sigma_j\cap C_{3/2}(0))$  is contained in a $1/2$ tubular neighborhood of $P$.

For $r_j$ sufficiently small, we construct a local foliation $\{\Sigma_j^\rho\}$ of a neighborhood of $0\in M_j$ by properly embedded capillary CMC surfaces (with mean curvature $H_0$) with boundary on $\partial M$, such that $\Sigma_j^0$ is also the graph of $w_j|_{S_1}$ over $S_1$. We do this by the implicit function theorem as follows. 

Given $r\in (0,r_0)$, we consider the rescaled manifold $M_r=r^{-1}(M-p)$. Let $C_0^{2,\alpha}(T_1)=\{u\in C^{2,\alpha}(T_1): u=0 \text{ on }S_1\}$. Given functions $u\in C_0^{2,\alpha}(T_1)$, $w\in C^{2,\alpha}(T_1)$ and $t\in \R$, consider the surface
\[\Sigma_{(u+w+t)}=\{(u(x)+w(x)+t)Y_r(x):x\in T_1\}\]
written as the graph of the function $u+w+t$ along the vector field $Y_r$ inside $M_r$. For sufficiently small $r$, define a function
\[h:\R\times \R\times C^{2,\alpha}(T_1)\times C_0^{2,\alpha}(T_1)\rightarrow C^{0,\alpha}(T_1)\]
by letting $h(r,t,w,u)=H_{(u+w+t)}-H_0r$, where $H_{(u+w+t)}$ equals to the mean curvature of $\Sigma_{(u+w+t)}$ in $M_r$. Similarly, define
\[\xi: \R\times \R\times C^{2,\alpha}(T_1)\times C_0^{2,\alpha}(T_1)\rightarrow C^{1,\alpha}(\Gamma_1)\]
by letting $\xi(r,t,w,u)=\lan \nu_\Sigma, \overline{\eta}_r\ran  - \cos\theta$, where $\nu_\Sigma$ is the outward conormal unit vector field of $\partial \Sigma_{(u+w+t)}$ in $\Sigma_{(u+w+t)}$, and $\overline{\eta}_r$ is the outward unit normal vector field of $\partial M_r$ in $M_r$. We then consider the function 
\[\Theta=(h, \xi):\R\times \R\times C^{2,\alpha}(T_1)\times C_0^{2,\alpha}(T_1)\rightarrow C^{0,\alpha}(T_1)\times C^{1,\alpha}(\Gamma_1).\]

It is straightforward that $h,\xi$ are $C^1$ functions (see the appendix of \cite{White87space}). When $r\rightarrow 0$, the manifold $M_r$ converges in $C^\infty$ Cheeger-Gromov sense to the Euclidean space. Hence (see Lemma A.2 and A.3 of \cite{Li2017polyhedron})
\[D_4(0,t,0,0)(v)=Lv= \left(\Delta v , \sin\theta \frac{\partial v}{\partial \eta}\right).\]
Clearly the linear operator $L: v\in C_0^{2,\alpha}(T_1)\mapsto (\Delta v, (\sin\theta)v_\eta)$ has trivial kernel. Now we fix $\alpha\in (0,\frac{\pi}{2\gamma})$. Since $\gamma\in (\pi/3, \pi/2)$, we may apply the elliptic regularity theory for mixed boundary conditions developed in \cite{AzzamKreyszig82solutions} and the Fredholm alternative in the appendix B of \cite{AmbrozioCarlottoSharp18compactness}, and obtain
\[\|u\|_{2,\alpha}\le C\left (\|\Delta u\|_{0,\alpha, T_1}+\|u_\eta\|_{1,\alpha, \Gamma_1}\right).\]
Thus $L$ is a Banach space isomorphism, and by the implicit function theorem, for each $t\in [-1/2,1/2]$, sufficiently small $r$ and $\|w\|_{C^{2,\alpha}(T_1)}$, there is a unique function $u=U_{r,t,w}$ in $C_0^{2,\alpha}(T_1)$ such that $\Theta(r,t,w,u)=\Theta(0,t,0,0)$. This implies that the surface $\Sigma_{(u+w+t)}$ is a surface with constant mean curvature $H_0r$ that meets $\partial M_r$ at constant angle $\theta$. Now we let $v_r^t=U_{r,t,w}+w+t$, and define
\[\Sigma_r^\rho=\{v_r^\rho(x)Y_r(x):x\in T_1\}.\]
Since $U$ depends $C^1$ on $t$, the surfaces $\Sigma_r^\rho$ foliate $C_r\cap \left\{|x_3|\le \tfrac 12\right\}$, a neighborhood of $0$ in $M_r$, if $r$ is sufficiently small. Moreover, $\partial \Sigma_r^0$ is the graph of $w$ over $S_1$.

\vspace{0.5em}
\textbf{Step 3:} Removable singularity.
\vspace{0.5em}

Fix $r_j$ sufficiently small, we apply the above construction to $M_{r_j}$, and obtain a foliation $\{\Sigma_j^\rho\}_{\rho\in [-1/2,1/2]}$ of the $\tfrac 12$ tubular neighborhood of $p\in M_{r_j}$ by capillary surfaces of constant mean curvature $H_0 r_j$. 
Let $\tilde \rho\in (-\tfrac 12, \tfrac 12)$ be the unique real number such that $\Sigma_j^{\tilde\rho}$ contains $p$. We now claim that the tangent cone of $\Sigma_j$ at $p$ is unique, and is precisely the half-plane $P = \{x_3= 0 ,x_1\geq 0\}$. We split this into two cases: 

Case 1: $\tilde \rho\ge 0$. Let $\rho_0$ be the supremum of all real numbers $\rho$ such that $\overline{\Sigma}_j\cap \Sigma_j^\rho\ne \emptyset$. Since $p\in \overline{\Sigma}_j\cap \Sigma_j^{\tilde \rho}$, we have that $\rho_0\ge \tilde \rho$. However, if $\rho_0> \tilde\rho$, then $\Sigma_j$ and $\Sigma_j^{\rho_0}$ must touch at a point which is not $p$, nor any point on $\partial \Sigma_j\cap \partial C_{r_j}$ (as $\rho_0>\tilde \rho\ge 0$), violating the strong maximum principle. Thus, $\rho_0=\tilde \rho$, and hence $\overline \Sigma_j$ is contained in the closure of the lower side of $\Sigma_j^{\tilde \rho}$ (i.e. $\{x_3\le 0\}$ in our Fermi coordinates). However, there are only two possible such planes in $T_p M_{r_j} \simeq \R^3_+$: either $T_p\Sigma_j^{\rho_0} = P$; or the reflection $\overline{P}$ of $P$ across the line $T_p(\partial \Sigma_j^{\rho_0}) \subset T_p(\partial M_{r_j})$ (i.e. $\{x_3 = \tan(2\theta- \pi)x_1,x_1\ge 0\}$). But the set of all possible tangent varifolds of $\Omega$ (and hence of $\Sigma_j$) at $p$ must be connected, as it is equal to the limit points of $\{\eta_{p,r}(\Sigma_j), r>1\}$. Since $P,\overline{P}$ are clearly separated in the space of (half-)planes, it follows that the tangent cone of $\Sigma_j$ at $p$ is unique and must be $P$. 

Case 2: $\tilde \rho \le 0$. By a similar argument as above (taking $\rho_0$ to be the infimum of all real numbers $\rho$ such that $\overline{\Sigma}_j\cap \Sigma_j^\rho\ne \emptyset$), we conclude that $\rho_0=\tilde \rho$, and $\overline{\Sigma}_j$ is contained in the closure of the upper side of $\Sigma_j^{\tilde \rho}$ (i.e. $\{x_3 \geq 0\}$). In this case, $T_p\Sigma_j^{\tilde \rho}=P$ is the only such half plane.

Finally, since the tangent cone of $\Sigma_j$ at $p$ is unique, $\overline \Sigma_j$ is, in a neighborhood of $p$, the graph of a $C^1$ function $v$. By standard elliptic regularity, this implies that $v$ is in fact $C^\infty$, and hence $\overline \Sigma_j$ is a smooth embedded surface.
\end{proof}

\section{Setups and main results}
\label{S:setups}

In this section, we establish the technical setup for our min-max theory, including the generic metrics we consider, and compactness results for regular capillary varifolds. This section also contains the statement of the main min-max Theorem \ref{T:main1}. 

Recall that by \cite[Section 5]{Marques-Neves18}, $\C(M)$ is identified with $\bI_{n+1}(M, \mZ_2)$. In particular, the flat $\F$-norm and the mass $\M$-norm are the same on $\C(M)$. Given $\Om_1, \Om_2\in \C(M)$, recall that we have defined the {\em $\mF$-distance} between them as:
\[ \mF(\Om_1, \Om_2)=\F(\Om_1-\Om_2)+\mF(|\partial\Om_1\lc \iM|, |\partial\Om_2\lc \iM|). \] 
Given $\Om\in\C(M)$, we will denote $\overline{\bB}^\mF_\ep(\Om)=\{\Om'\in \C(M): \mF(\Om', \Om)\leq \ep\}$.

\subsection{Generic metrics, regular capillary varifold and compactness results}

We consider metrics with the following property:

\begin{itemize}
\item[($\star$)] If $\Sigma^n \hookrightarrow M$ is an embedded submanifold of constant mean curvature with respect to $g$, then $\Sigma \cap \pr M$ is contained in a countable union of connected, smoothly embedded $(n-1)$-dimensional submanifolds. 
\end{itemize}

Let $\mathcal{S}$ be the set of smooth metrics $g$ on $M$ such that the boundary mean curvature $H_g^{\pr M}$ is a Morse function on $\pr M$. Note that (\hyperref[def:generic]{$\star$}) is clearly satisfied for any $g\in \mathcal{S}$. For instance, one may adapt the argument of \cite[Lemma 2.8]{Zhou-Zhu17} as follows: Given $p\in\Sigma\cap \pM$, write $\Sigma$ and $\pM$ as graphs over their common tangent plane at $p$. The difference $u$ of the graph-defining functions satisfies a linear elliptic PDE $Lu = H^{\pM} - c$, and $u$ does not change sign. Wherever $H^{\pM} - c \neq 0$, the touching set $\Sigma\cap \pM$ near $p$ is contained in the critical set $\{u(x) = 0, Du(x)=0\}$ and is a union of $(n-1)$-dimensional submanifolds by \cite[Lemma 2.8]{Zhou-Zhu17}. On the other hand, the set $\{H^{\pM} - c=0\}$ is automatically contained in a union of $(n-1)$-dimensional submanifolds since $H^{\pM}$ is Morse.

\begin{lemma}\label{lemma.generic.metrics}
    Let $n\ge 2$ and let $(M^{n+1},\partial M)$ be a smooth manifold with boundary. Then $\mathcal{S}$ is open and dense in the space of $C^m$ Riemannian metrics on $M$, for any $m\ge 4$.\end{lemma}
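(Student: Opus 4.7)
The plan is to verify openness and density in the $C^m$ topology separately, using a conformal perturbation near $\partial M$ for the latter.

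\textbf{Openness.} The map $\Phi \colon g \mapsto H_g^{\partial M}$ is continuous from $C^m$ metrics to $C^{m-2}(\partial M)$. Since $m \geq 4$, there is a continuous embedding $C^{m-2}(\partial M) \hookrightarrow C^2(\partial M)$, and the set of Morse functions on the compact closed manifold $\partial M$ is open in $C^2(\partial M)$: non-degeneracy of finitely many critical points is itself a $C^2$-open condition, and no new critical points can arise under small $C^1$ perturbations by compactness of $\partial M$. Hence $\mathcal{S} = \Phi^{-1}(\mathrm{Morse})$ is $C^m$-open.

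\textbf{Density.} Given $g_0$ and $\epsilon > 0$, first approximate $g_0$ by a smooth metric in $C^m$ (so WLOG $g_0$ is smooth) and set $H_0 := H_{g_0}^{\partial M} \in C^\infty(\partial M)$. By the classical density of Morse functions in $C^\infty(\partial M)$ in any $C^k$-topology, choose a smooth Morse function $\tilde H$ on $\partial M$ with $\|\tilde H - H_0\|_{C^m(\partial M)} < \delta$, where $\delta > 0$ will be fixed below. Let $f := (\tilde H - H_0)/n$ and extend $f$ smoothly to $M$ as $\tilde f$ with controlled $C^m$ norm. Using Fermi coordinates $(t,x) \in [0, t_0) \times \partial M$ for $g_0$, define
$$u(t,x) := -t\,\chi(t)\,\tilde f(x),$$
extended by zero off the tubular neighborhood, where $\chi$ is a smooth cutoff with $\chi(0)=1$ and support in $[0, t_0/2]$. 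Set $g := e^{2u} g_0$. Then $u|_{\partial M} \equiv 0$, so the induced metric on $\partial M$ is unchanged, and the standard conformal transformation law for mean curvature gives
$$H_g^{\partial M} = H_{g_0}^{\partial M} + n\, \partial_{\nu_{g_0}} u\big|_{\partial M} = H_0 + n f = \tilde H,$$
which is Morse. Finally $\|u\|_{C^m(M)} \leq C(\chi)\|f\|_{C^m(\partial M)} \leq C\delta$, so $\|g - g_0\|_{C^m(M)} \leq C(g_0, \chi)\delta$; choosing $\delta$ sufficiently small gives $g \in \mathcal{S}$ with $\|g - g_0\|_{C^m} < \epsilon$.

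The only mild subtlety is the conformal transformation law: when $u$ vanishes on $\partial M$, only its normal derivative contributes to $H_g^{\partial M}$, and by the above construction this derivative can be prescribed arbitrarily. This surjectivity of the linearisation of $\Phi$ onto smooth perturbations of $H_0$ is the core of the density argument, combined with the classical density of Morse functions and a preliminary smoothing of $g_0$.
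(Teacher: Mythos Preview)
Your proposal is correct and follows essentially the same approach as the paper: for density, perturb the boundary mean curvature to a nearby Morse function and realise this by a conformal change $g'=e^{2u}g$ with $u|_{\partial M}=0$ and prescribed normal derivative. Your version supplies extra detail the paper omits (an explicit construction of $u$ in Fermi coordinates and a preliminary smoothing of $g_0$ to handle the merely-$C^m$ case), but the core argument is identical.
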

\begin{proof}
Openness is clear. To show that it is dense, let $H_g$ be the mean curvature of $\partial M$ with respect to the outer unit normal $\nu$ and a given metric $g$. We may choose a Morse function $H'$ on $\partial M$ with $\|H-H'\|_{C^m(\partial M,g)}\le \ep/16$. Denote $h=H'-H$. Then there exists a smooth function $u$ on $M$ such that 
    \[u=0,\quad \pa{u}{\nu}=-\frac{h}{n}\quad \text{ on }\partial M,\quad \|u\|_{C^m(M,g)}\le \ep/8.\]
    Let $g'=e^{2u}g$. It follows that $H_{g'}=e^u\left(H_g - n \pa{u}{\nu}\right)=H_g+h=H'$. 
\end{proof}

Henceforth, by a generic metric on $M$ we mean one satisfying (\hyperref[def:generic]{$\star$}). 


\begin{definition}[Regular capillary varifold]\label{definition.regular.capillary.varifold}

    Let $(M,\partial M)$ be a smooth manifold with boundary, $U$ a relatively open set of $M$, $g$ a smooth metric satisfying (\hyperref[def:generic]{$\star$}). For a fixed capillary contact angle $\theta\in (0,\pi)$ and $c\in  \R$, we say $V$ is a \textit{regular capillary varifold} in $U$, if:
    \begin{enumerate}
        \item $c\neq 0$ and $V = |\partial^\theta\Omega|$; where $\Omega\in \C(U)$ is a Caccioppoli set such that $\Sigma=\spt(|\partial^\theta\Omega|\lc \mathring{U})$ is a regular, almost properly embedded, capillary CMC surface in $U$.
        \item $c=0$ and $V = |\partial^\theta \Omega| + \sum_i m_i\Sigma_{(i)}$; where:
    \begin{itemize} 
    \item $\Omega\in \C(U)$ is a Caccioppoli set such that $\Sigma=\spt(|\partial^\theta\Omega|\lc \mathring{U})$ is a regular, almost properly embedded minimal surface in $U$; 
    \item $\Sigma_{(i)}$ is a connected, smooth, embedded, minimal surface with no boundary in $U$;
    \item $m_i\geq 2$ are integers.
    \end{itemize}
    \end{enumerate}
    
    We say $V$ is stable if $\Omega$ is stable (in $U$) for the capillary functional $\A$ (and each $\Sigma_{(i)}$ is stable, when $c=0$).
\end{definition}

Note that CMC components without (topological) boundary in $U$ may be included in the capillary boundary $\pr^\theta \Omega$, but they occur with multiplicity 1 and are locally boundaries (of Caccioppoli sets).

\begin{example}
    We illustrate in this example how the generic property (\hyperref[def:generic]{$\star$}) is useful. If one does not enforce (\hyperref[def:generic]{$\star$}), then one can construct, as illustrated in Figure \ref{figure.non.generic.touching}, $(M^{n+1},\partial M,g)$, such that part of $\partial M$ has constant mean curvature, and an embedded capillary CMC surface $\Sigma$ coincide with $\partial M$ on an open set of $\partial M$. Moreover, $\Sigma$ bounds a Caccioppoli set $\Omega$. We emphasize here that $\spt(\partial \Omega\lc \mathring{M})\ne \Sigma$, and that $\partial \Omega\lc \mathring{M}$ is not a capillary CMC surface.
\end{example}

\begin{figure}[htbp]
    \centering
    \includegraphics{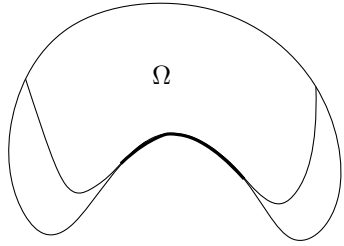}
    \caption{Example: $n$-dimensional touching in non-generic metrics}
    \label{figure.non.generic.touching}
\end{figure}

By simply taking $M\setminus \Omega$ in place of $\Omega$, one obtains the same capillary surface with $H=-c$ and contact angle $\pi-\theta$. Thus, we will focus on the case when $\theta\in (0,\tfrac{\pi}{2}]$ from now on.

Suppose $V=|\partial^\theta \Om|$ is a regular capillary varifold in $(M,\partial M,g)$, $g$ is generic, $\spt(\partial \Omega\lc \mathring{M})$ is an almost properly embedded capillary CMC $\Sigma$, and $p\in \spt V\cap \partial M$. If $p\notin \Sigma$, then in a neighborhood of $p$, $V$ is supported on $\partial M$ with density $\cos\theta$. Assume that $p\in \Sigma\cap \partial M$. We list all possible behaviors of $V$ near $p$; (see also Figure \ref{figure.different.boundary.behaviors}, where shaded regions indicate $\Omega$).

\begin{enumerate}
    \item If $p\in \partial \Sigma\cap \mR(\Sigma)$, $\Sigma$ is locally a properly embedded surface meeting $\partial M$ at $\theta$ near $p$. In particular, $\Theta^n(\|V\|,p)=\frac{1}{2}(1+\cos\theta)$.
    \item If $p\in \partial \Sigma \cap \mS(\Sigma)$, by Remark \ref{remark.regular.and.singular.parts.almost.embedded.surface}, $\Sigma$ is locally the union of two properly embedded surfaces meeting $\partial M$ both at $\theta$ near $p$. Thus $\Theta^n(\|V\|,p)=1$.
    \item If $p\in \interior(\Sigma)\cap \mR(\Sigma)$, $\Sigma$ is locally properly embedded and touches $\partial M$ at $p$. We have $\Theta^n(\|V\|,p)=1$ or $\Theta^n(\|V\|,p) = 1+\cos\theta$ depending on the orientation relative to $\pr M$.
    \item If $p\in \interior(\Sigma)\cap \mS(\Sigma)$, by Remark \ref{remark.regular.and.singular.parts.almost.embedded.surface}, $\Sigma$ is locally the union of two embedded surfaces, each touching $\partial M$ at $p$. In this case $\Theta^n(\|V\|,p)=2$ or $\Theta^n(\|V\|,p) = 2+\cos\theta$ depending on the orientation relative to $\pr M$. 
\end{enumerate}

\begin{figure}[htbp]
    \centering
    \includegraphics{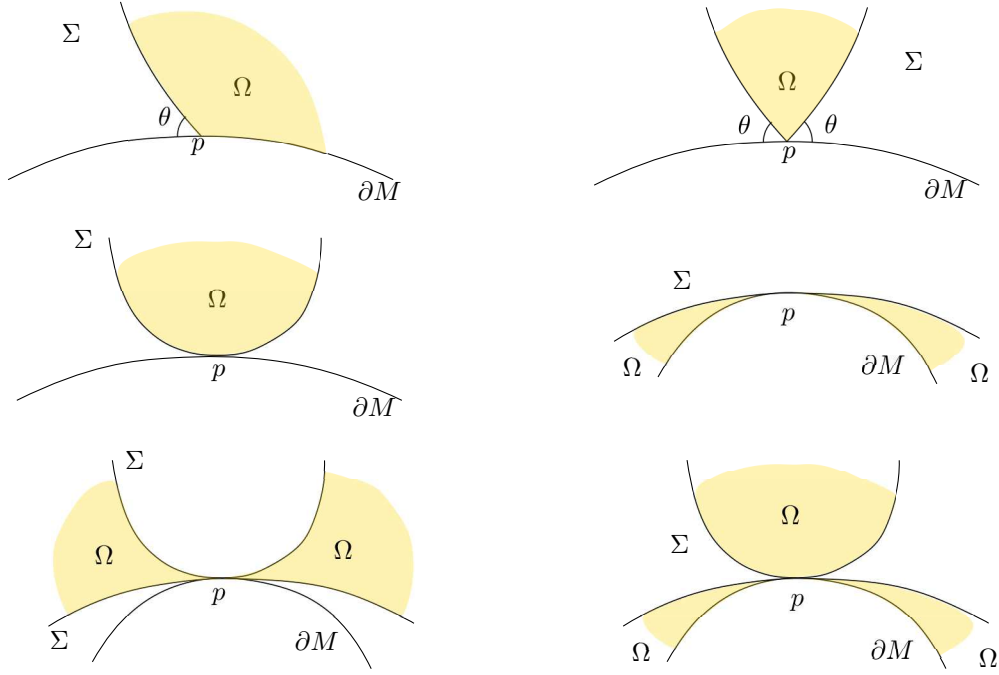}
    \caption{Different boundary behaviors of a regular capillary varifold. }
    \label{figure.different.boundary.behaviors}
\end{figure}

We have the following important compactness property for regular capillary varifolds in generic metrics.


\begin{theorem}
\label{theorem.compactness.almost.properly.embedded.capillary.surfaces}
Let $(M^3,\partial M,g)$ be a smooth Riemannian manifold with boundary, where $g$ satisfies (\hyperref[def:generic]{$\star$}), and $U$ be a relatively open subset of $M$. Suppose $V_k=|\partial^\theta\Omega_k|+\sum_i m_i \Sigma_{k,(i)}$ is a sequence of stable regular capillary varifolds in $U$ with mean curvature $c_k$, constant contact angle $\theta\in (0,\tfrac{\pi}{2}]$, and $\sup_k \|V_k\|(M)<\infty$.  Setting $\Sigma_k=\spt(\|V_k\|\lc \mathring{U})$, the following statements hold.
\begin{enumerate}
    \item Assume that either: $\inf |c_k|>0$; or $\theta\ne \pi/2$ and each connected component of $\Sigma_k$ has nonempty boundary. Then $V_k$ subsequentially converges as varifolds to a stable regular capillary varifold $V_\infty=\partial^\theta \Omega_\infty$. Moreover, $\Sigma_k\rightarrow \Sigma_\infty$ with multiplicity $1$ on $\mR(\Sigma_\infty)$.
    \item If $\inf |c_k|>0$ and $\theta\ne \pi/2$, for any $p\in \mS(\Sigma_\infty)\cap \partial \Sigma_\infty$, there exists a neighborhood $U_p$ of $p$, such that $\mS(\Sigma_\infty)\cap U_p\subset \partial \Sigma_\infty$.
    \item If $c_k\rightarrow 0$, then up to a subsequence, $\Sigma_k$ converges locally smoothly (with multiplicity) to some smooth almost properly embedded stable minimal capillary surface $\Sigma_\infty$ in $U$. Moreover, $\mS(\Sigma_\infty)\subset \partial \Sigma_\infty$.
    \item If $\theta=\pi/2$, then up to a subsequence, $\Sigma_k$ converges locally smoothly to some almost properly embedded free-boundary stable CMC surface $\Sigma_\infty$ in $U$.
\end{enumerate}
\end{theorem}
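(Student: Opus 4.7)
The strategy is to combine the curvature estimate of Theorem \ref{theorem.curvature.estimate} with the strong maximum principle Lemma \ref{lemma.strong.max.principle.embedded} and the removable singularity Theorem \ref{theorem:removal.singularity}. First, using the uniform mass bound I would extract a subsequential varifold limit $V_k \to V_\infty$, together with an $\mF$-subconvergence $\Omega_k \to \Omega_\infty \in \C(U)$; Lemma \ref{L:weak convergence of reduced currents} then yields $\partial^\theta\Omega_k \to \partial^\theta\Omega_\infty$. On any compact $K \subset U$, Theorem \ref{theorem.curvature.estimate} provides a uniform bound on $|A_{\Sigma_k}|$, so after passing to a subsequence the surfaces $\Sigma_k \cap K$ converge smoothly (with possible integer multiplicity) to an almost properly embedded stable capillary CMC limit $\Sigma_\infty$ in $U$; isolated singular points of the convergence that accumulate in the interior are removed via Theorem \ref{theorem:removal.singularity}, while the $\cos\theta$-weighted boundary portion of $|\partial^\theta\Omega_k|$ passes to the limit by BV-convergence of the Caccioppoli sets. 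Combined with the local classification (cases (1)--(4) of Figure \ref{figure.different.boundary.behaviors}), this identifies $V_\infty$ as a stable regular capillary varifold.

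For Case (1), I would rule out higher multiplicity in $\partial^\theta\Omega_\infty$ via Lemma \ref{lemma.strong.max.principle.embedded}. If two sheets of $\Sigma_k$ accumulate near a point $p$ with multiplicity, their outward unit normals (each pointing out of $\Omega_k$) must converge consistently to the same limit normal at interior regular points, and the contact angle condition forces the same conclusion at $\partial M$ when $\theta \neq \frac{\pi}{2}$; Lemma \ref{lemma.strong.max.principle.embedded}(1) then shows the sheets coincide, a contradiction. When $\inf|c_k|>0$, any additional integer-multiplicity minimal components of the limit would have to carry mean curvature $\lim c_k \neq 0$ and hence cannot be minimal, excluding the $\sum_i m_i \Sigma_{(i)}$ piece. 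When $\theta\neq\frac{\pi}{2}$ and every component of $\Sigma_k$ has nonempty boundary, a floating closed component of $\Sigma_\infty$ would arise only if boundary components of $\Sigma_k$ escape to $\partial_{rel}U$ or shrink to a point, both of which are excluded by smooth convergence and the uniform mass bound.

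For Case (2), at $p \in \mS(\Sigma_\infty) \cap \partial\Sigma_\infty$ with $c\neq 0$, the two locally transverse sheets meet along a $1$-dimensional curve in $\partial M$ (by Remark \ref{remark.regular.and.singular.parts.almost.embedded.surface}); an interior self-touching near $p$ would force the two sheets to coincide by Lemma \ref{lemma.strong.max.principle.embedded}(2) (whose interior case reduces to the classical strong maximum principle), contradicting the transverse behavior observed at $p$. Cases (3) and (4) proceed analogously using the curvature estimate and smooth subconvergence, except that higher multiplicity and closed (respectively free-boundary) components are now permitted because the mean curvature equation no longer distinguishes normal directions; the inclusion $\mS(\Sigma_\infty)\subset\partial\Sigma_\infty$ in Case (3) again follows from the interior strong maximum principle together with the generic property (\hyperref[def:generic]{$\star$}). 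The main obstacle is tracking orientation of sheets through the limit in the genuinely capillary regime ($c\neq 0$, $\theta\neq\frac{\pi}{2}$): the contact angle condition at $\partial M$ furnishes the rigidity needed for multiplicity one, but verifying normal compatibility in the limit requires the BV-convergence of $\Omega_k$ from Lemma \ref{L:weak convergence of reduced currents} working in tandem with the stable Bernstein Theorem \ref{theorem.stable.bernstein} that underlies the curvature estimate.
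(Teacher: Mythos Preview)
Your overall strategy---curvature estimates give smooth graphical convergence, then rule out multiplicity using orientation constraints---matches the paper's, but two of your key steps do not go through as written.

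For assertion (1), you argue that if two sheets $\Sigma_k^1,\Sigma_k^2$ of $\Sigma_k$ converge to $\Sigma_\infty$ near a regular point with the same limit normal, then Lemma~\ref{lemma.strong.max.principle.embedded}(1) forces them to coincide. But that lemma requires the two surfaces to \emph{touch}, and for each fixed $k$ the sheets are disjoint graphs over $\Sigma_\infty$; nothing prevents a stack of parallel CMC graphs with a common normal direction. The paper closes this gap with a current-theoretic step you omit: writing $\Omega_k\lc U_p=\sum_i a_iU_i$ (constancy theorem) with $a_i\in\{0,1\}$ since $\Omega_k$ is a Caccioppoli set, the alternation of the $a_i$ forces the number of ``upward'' and ``downward'' outward normals among the $\Sigma_k^i$ to differ by at most $1$. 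Combined with the fact that all normals point the same way (from $c_\infty\neq 0$ or from $\langle\nu_k^i,\bar\eta\rangle=\cos\theta\neq 0$ at boundary points), this gives $m_k=1$. Your invocation of Theorem~\ref{theorem:removal.singularity} is also unnecessary: the curvature bound from Theorem~\ref{theorem.curvature.estimate} is uniform on compact subsets of $U$, so the convergence is already smooth with no singular points to remove.

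For assertion (2), your claim that an interior self-touching point of $\Sigma_\infty$ forces the two sheets to coincide by Lemma~\ref{lemma.strong.max.principle.embedded}(2) is incorrect: that lemma requires $\Sigma_2\subset\overline{U_1}$ where $U_1$ is the region into which $\nu_1$ does \emph{not} point, and in the almost-embedded boundary configuration (Remark~\ref{remark.regular.and.singular.parts.almost.embedded.surface}) the sheets touch with $\nu_1=-\nu_2$ and $\Omega$ lying on the \emph{outside} of the pinch, so the hypothesis fails. The paper instead argues by contradiction: if interior self-touching points $p_j\in\mS(\Sigma_\infty)\cap\mathring\Sigma_\infty$ accumulate at $p\in\partial\Sigma_\infty$, then $\nu_1=-\nu_2$ at each $p_j$ and hence (by smoothness) at $p$; but the capillary condition $\langle\nu_i,\bar\eta\rangle=\cos\theta$ then yields $\cos\theta=-\cos\theta$, forcing $\theta=\pi/2$.
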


\begin{proof}
By assumption, $\sup \Area(\Sigma_k)<\infty$. Thus, by Theorem \ref{theorem.curvature.estimate}, $\sup |A_{\Sigma_k}|<\infty$. Therefore, by standard elliptic PDE estimates, a subsequence of $\Sigma_k$ (still denoted by $\Sigma_k$) locally smoothly converges (possibly with multiplicity) to a stable regular capillary surface $\Sigma_\infty$. Also, since each $\Sigma_k$ is almost properly embedded with uniform curvature bounds, we conclude that $\Sigma_\infty$ is almost properly embedded. By the generic assumption on $g$, $\Sigma_\infty$ does coincide with $\partial M$ on any open subset of $\partial M$.

We now prove assertion (1). Suppose $\Sigma_k=\partial \Omega_k\lc \mathring{U}$ for $\Omega_k\in \C(U)$. By the standard compactness theorem, up to a subsequence, $\Omega_k\rightarrow \Omega_\infty$, $\partial \Omega_k\rightarrow \partial \Omega_\infty$ as integral currents, and $\Omega_\infty\in \C(U)$. Further more, replacing $\Omega_k$ by $U\setminus \Omega_k$ if necessary, we assume that $c_k\ge 0$ and $\theta\in (0,\pi)$. We must prove $\Sigma_\infty=\spt(\partial\Omega_\infty\cap \mathring{U})$ as varifolds. 

We first assume $\inf c_k>0$. Fix a point $p\in \mR(\Sigma_\infty)$. By Lemma \ref{lemma.strong.max.principle.embedded}, there is an open neighborhood $U_p$ of $p$, such that for sufficiently large $k$, $\Sigma_k\cap U_p$ has a graphical decomposition $\sqcup_{i=1}^{m_k} \Sigma_k^i$, where $m_k\ge 1$ is the multiplicity of convergence, and that the outward unit normal of $\Sigma_k^i$ points to the same direction (without loss of generality, say they are all upward, when viewed as an oriented graph over $\Sigma_\infty$).

We claim that $m_k=1$. By the constancy theorem, $\Omega_k\lc U_p=\sum_{i=0}^{m_k}a_i U_i$, here $a_i\in \mZ$, $U_i$ are the open subsets of $U_p$ separated by $\{\Sigma_k^i\}$. Thus, $\Sigma_k\lc U_p=\partial (\Omega_k\lc U_p)=\sum_i a_i \partial U_i$. Hence, along $\{\Sigma_k^i\}$ where each $\Sigma_k^i$ viewed as an oriented graph over $\Sigma_\infty$, the number of upward unit normal and the number of downward unit normal may differ by at most $1$. This is only possible when $m_k=1$.

Alternatively, we assume that $\theta\ne \pi/2$, and that each connected component of $\Sigma_k$ has nonempty boundary. Thus $\Sigma_\infty$ satisfies the same assumption. Fix a point $p\in \partial\Sigma_\infty$. We separate two cases. First suppose $p\in \mR(\Sigma_\infty)$. Let $\nu_p$ be the outward unit normal of $\Sigma_\infty$ at $p$. By taking a small neighborhood $U_p$ of $p$ and sufficiently large $k$, we can decompose $\Sigma_k\cap U_p$ graphically as $\sqcup_{i=1}^{m_k}\Sigma_k^i$. Let $\nu_k^i$ be the outward unit normal vector field of $\Sigma_k$ at $p$. By smooth convergence, we have either $\nu_k^i\rightarrow \nu_p$ or $\nu_k^i\rightarrow -\nu_p$. However, since $\theta\ne \pi/2$ and each $\Sigma_k$ satisfies $\bangle{\nu_k^i,\overline{\eta}}=\cos\theta\ne 0$ along the boundary, we must have that $\nu_k^i\rightarrow \nu_p$, and in particular, all of the outward unit normal of $\Sigma_k^i$ points to the same direction as $\nu_p$. Thus, by the same proof as before, we must have $m_k=1$ near $p$, and hence $\Sigma_k\rightarrow \Sigma_\infty$ in multiplicity $1$ as currents.

Now assume that $p\in \mS(\Sigma_\infty)$. In a small neighborhood $U_p$ of $p$, let $\Sigma_\infty^1,\cdots,\Sigma_\infty^l$ be the properly embedded sheets of $\Sigma$. By Lemma \ref{lemma.strong.max.principle.embedded}, $T_p \Sigma_\infty^j$, $j=1,\cdots,l$, are capillary half planes in $\R^3_+$ without interior intersection. Thus $l=2$, and $\Omega\cap U_p$ is the region bounded by $\Sigma_\infty^1$ and $\Sigma_\infty^2$. We then perform the same analysis as before over $\Sigma_\infty^1$, $\Sigma_\infty^2$, and conclude that the convergence $\Sigma_k\rightarrow \Sigma_\infty$ is with multiplicity $1$.

For assertion (2), suppose, for the sake of contradiction, that there exists $p\in \mS(\Sigma_\infty)\cap \partial \Sigma_\infty$, and a sequence $p_j\in \mS(\Sigma_\infty)\cap \mathring{\Sigma}_\infty$ converges to $p$. Since each $p_j$ is in the interior of $\Sigma_\infty$, by the maximum principle, at each $p_j$, there exist two sheets of $\Sigma_\infty$ touching at $p_j$, and the outward unit normal of $\Sigma_\infty$ at $p_j$ are opposite. Since $p_j\rightarrow p$ and $\Sigma_\infty$ is smooth, the same holds for the two sheets meeting at $p$. On the other hand, the angle between the outward unit normal vectors of $\Sigma$ and $M$ is constant $\theta$. This implies $2\theta=\pi$, contradiction.

Assertion (3) and (4) follows directly from Theorem \ref{theorem.curvature.estimate} and the interior strong maximum principle for minimal surfaces.
\end{proof}

\subsection{Main theorems}
\label{sec:setup-main}

\vspace{1em}
Let $I=[0, 1]$ be the unit interval. Let $\Phi_0: I\to (\C(M), \mF)$ be a continuous map under the $\mF$-topology, and so that $\Phi_0(0)=\emptyset$ and $\Phi_0(1) = M$. We usually call such $\Phi_0$ a {\em sweepout}. We use $\Pi$ to denote the set of all continuous maps $\Psi: I\to (\C(M), \mF)$ such that  $\Psi(0)=\emptyset, \Psi(1)=M$ and $\Psi$ and $\Phi_0$ are homotopic to each other in $C(M)$ with the flat toplogy.

\begin{definition}[Width and min-max sequences]
The {\em $(c, \theta)$-width} (or simply called {\em width}) of $\Pi$ is defined by
\[\bL(\Pi)= \inf_{\Phi\in\Pi}\sup_{x\in I} \A(\Phi(x))\]
A sequence $\{\Phi_i\}_{i\in\N} \subset \Pi$ is called a min-max sequence if $\bL(\Phi_i) := \sup_{x\in I} \A(\Phi_i(x))$ satisfies
\[ \bL(\{\Phi_i\}_{i\in\N}) :=\limsup_{i\to\infty}\bL(\Phi_i) =\bL(\Pi). \]
\end{definition}

\begin{definition}
Given a min-max sequence $\{\Phi_i\}_{i\in\N}$ in $\Pi$, the {\em critical set} of $\{\Phi_i\}$ is defined as
\[ \bC(\{\Phi_i\})=\big\{ V=\lim_{j\to\infty}|\partial^\theta \Phi_{i_j}(x_j)| \text{ as varifolds: with } \lim_{j\to\infty}\A(\Phi_{i_j}(x_j))=\bL(\Pi) \big\}.\]
\end{definition}

Now we state the min-max theorem for capillary surfaces in a three dimensional compact manifold.

\begin{theorem}[Min-max theorem]
\label{T:main1}
Let $(M^3, g)$ be a compact manifold with smooth boundary $\partial M$ and metric $g$ satisfying (\hyperref[def:generic]{$\star$}), $c\in\R$, and $\theta\in (0, \frac{\pi}{2}]$. Given a map $\Phi_0: I \to (\C(M), \mF)$ continuous in the $\mF$-topology with $\Phi_0(0)=\emptyset$ and $\Phi_0(1)=M$, let $\Pi$ be the associated homotopy class. Suppose
\begin{equation}
\label{E:nontriviality}
    \bL(\Pi)>\max\{\A(\emptyset), \A(M)\}=\max\{\cos\theta\cdot\mH(\partial M)-c\vol(M), 0\}.
\end{equation}
Let $\{\Phi_i\}_{i\in\N}\subset \Pi$ be a min-max sequence for $\Pi$. Then there exists $V\in \bC(\{\Phi_i\})$ such that $V\lc \iM$ is induced by a nontrivial, smooth, almost properly embedded surface $\Si\subset M$ of prescribed mean curvature $c$ and smooth boundary $\partial \Sigma$ contacting $\partial M$ at angle $\theta$. Moreover, if $c\neq 0$ and $\theta\neq \frac{\pi}{2}$, then $V$ has multiplicity one. 
\end{theorem}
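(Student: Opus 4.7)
The plan is to implement the Almgren--Pitts min-max scheme adapted to the capillary functional $\A$, broadly following \cite{Zhou-Zhu17,Zhou-Zhu18,Li-Zhou16} but with several capillary-specific modifications outlined in the introduction. The proof divides into four phases: pull-tight, extraction of an almost-minimizing varifold, replacement/regularity, and the multiplicity analysis.

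First, I would carry out a pull-tight procedure. Using the first variation formula \eqref{E: 1st variation for A} together with Corollary \ref{C:various closeness} and Proposition \ref{L:communicates lemma}, one constructs a continuous assignment $\Om \mapsto \{F_t^\Om\}$ of isotopies generated by vector fields in $\X_{tan}(M)$ which strictly decrease $\A(\Om)$ whenever $\Om$ is not $\A$-stationary, depending continuously on $\Om$ in the $\mF$-topology. Applying this deformation to the min-max sequence $\{\Phi_i\}\subset \Pi$ produces a new min-max sequence $\{\Phi_i^*\}$ with $\bC(\{\Phi_i^*\}) \subset \bC(\{\Phi_i\})$, every element of which has $c$-bounded first variation with respect to $\X_{tan}(M)$; equivalently, $V\lc \iM$ has weak mean curvature $c$ and the natural free boundary condition holds for $|\partial^\theta\Om|$.

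Next, I would extract a varifold with the almost-minimizing property. Via Pitts' combinatorial argument in the capillary setting of Section \ref{S:AM}, the nontriviality assumption \eqref{E:nontriviality} yields some $V \in \bC(\{\Phi_i^*\})$ which is $\A$-almost-minimizing in sufficiently small annuli $\wt{\An}_{s,r}(p)$ for every $p \in M$. Iteratively minimising $\A$ within such an annulus subject to fixed data outside produces a replacement $V^*$. Theorem \ref{theorem.regularity.of.minimizers} gives interior smoothness and capillary boundary behaviour at each minimising step, and Theorem \ref{theorem.compactness.almost.properly.embedded.capillary.surfaces} (which rests on the curvature estimate Theorem \ref{theorem.curvature.estimate}, and hence on the stable Bernstein Theorem \ref{theorem.stable.bernstein}) transfers this regularity to the limit $V^*$ as an almost properly embedded stable capillary CMC surface on the annulus. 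The regularity of $V$ itself on $\wt{\An}_{s,r}(p)$ then follows by a gluing argument: a secondary replacement $V^{**}$ of $V^*$ on an overlapping annulus must coincide with $V^*$ on the overlap, and iterating yields regularity away from $p$. In the interior this proceeds as in \cite{Zhou-Zhu17}, but where the replacement meets the $\cos\theta$-weighted portion of $\partial^\theta\Om$ on $\pr M$, a novel $C^1$-matching argument at the boundary is required; Theorem \ref{thm:vfld-mp}, Lemma \ref{lemma.strong.max.principle.embedded}, and property $(\star)$ rule out components supported entirely in $\pr M$. Regularity across $p$ then follows from the tangent cone classification combined with the removable singularity Theorem \ref{theorem:removal.singularity}, whose hypotheses are verified using the monotonicity \eqref{eq.monotonicity} and the regularity of the replacements.

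Finally, when $c\neq 0$ and $\theta \neq \pi/2$, the multiplicity-one assertion follows from the constancy-theorem argument used in Theorem \ref{theorem.compactness.almost.properly.embedded.capillary.surfaces}(1): the boundary condition $\bangle{\nu,\bar\eta}=\cos\theta\neq 0$ forces all graphical sheet normals to align with $\nu$, and the only Caccioppoli configuration consistent with a multiplicity-$m$ graphical convergence is $m=1$; since the same analysis applies to each replacement, $V$ itself has multiplicity one on its regular set. I expect the main obstacles to be, first, the tangent cone classification at boundary points (including the novel $\mathbb{N}+\cos\theta$-multiples of the barrier plane), where Allard-type regularity and reflection techniques are unavailable, and second, the boundary gluing of secondary replacements across the $\pr M$-weighted piece of $\partial^\theta\Om$; both are resolved by leaning on the monotonicity formula, the replacement method, and the CMC foliation constructed in the proof of Theorem \ref{theorem:removal.singularity}.
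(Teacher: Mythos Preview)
Your proposal is correct and follows essentially the same approach as the paper: the paper's own proof simply cites Theorem~\ref{T:existence of almost minimizing capillary varifolds} (pull-tight plus Almgren--Pitts combinatorics yielding a $c$-bounded, almost-minimizing $V$) and Theorem~\ref{T:boundary regularity} (replacement, tangent-cone classification, boundary gluing, and removable singularity), with the multiplicity-one statement for $c\neq 0$, $\theta\neq\pi/2$ coming from the compactness Theorem~\ref{theorem.compactness.almost.properly.embedded.capillary.surfaces}(1) exactly as you indicate. Your outline accurately unpacks the content of these two theorems and correctly identifies the two genuinely new difficulties (tangent-cone classification without Allard/reflection, and gluing across the $\cos\theta$-weighted boundary piece).
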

\begin{remark}
When $c=0$ and $\theta=\frac{\pi}{2}$, this is the free boundary min-max theory established by M. Li and the second author in \cite{Li-Zhou16}. When $M$ is closed, i.e. $\partial M=\emptyset$, this is the constant mean curvature (CMC) min-max theory established by the last two authors in \cite{Zhou-Zhu17}. The results were later generalized to the cases when $c\neq 0, \theta= \frac{\pi}{2}$ in \cite{Sun-Wang-Zhou20}. 
\end{remark}

\begin{proof}[Proof of Theorem \ref{T:main1}.]
As in the above remark, the remaining case is when $\theta<\frac{\pi}{2}$. The theorem follows from combining Theorem \ref{T:existence of almost minimizing capillary varifolds} and Theorem \ref{T:boundary regularity}.
\end{proof}

\subsection{Existence of nontrivial sweepout}
We begin with the following lemma:

\begin{lemma}\label{lemma.A>0.for.small.region}
    Let $(M^{n+1},g)$ be a smooth Riemannian manifold with boundary, $c\in \R,\theta\in (0,\pi)$. Then there exists $\delta>0$, and constants $C_0,V_0>0$ depending only on $M,c,\theta$, such that 
    \[\A(\Omega)>\delta\mH^{n+1}(\Omega)^{\frac{n}{n+1}}, \text{ whenever }\Omega\in \C(M) \text{ and }|\Omega|< V_0.\]
\end{lemma}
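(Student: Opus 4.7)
The plan is to reduce the claim to a capillary isoperimetric inequality on $M$ and to absorb the volume term $c|\Omega|$ as a lower-order correction when $|\Omega|$ is small.

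The first step is to establish a capillary isoperimetric inequality of the form
$$\mH^n(\partial\Omega\lc\mathring M) + \cos\theta\,\mH^n(\partial\Omega\lc\partial M) \ge c_M |\Omega|^{n/(n+1)}$$
for every $\Omega\in\C(M)$ with $|\Omega|<V_0'$, where $c_M,V_0'>0$ depend only on $(M,g)$ and $\theta\in(0,\pi)$. For $\theta\in(0,\tfrac{\pi}{2}]$, which is the range relevant to the paper, this is immediate from the classical relative isoperimetric inequality on a compact manifold with boundary together with $\cos\theta\ge 0$: the wetting term only helps. For general $\theta\in(0,\pi)$ I would instead invoke the sharp capillary isoperimetric inequality on the Euclidean half-space $\R^{n+1}_+$, whose minimisers are spherical caps meeting $\partial\R^{n+1}_+$ at angle $\theta$ and whose isoperimetric constant is strictly positive for $|\cos\theta|<1$, and transfer it to $M$ via a finite cover by Fermi half-balls near $\partial M$ and interior geodesic balls on which the metric is $(1+\epsilon)$-bi-Lipschitz to its Euclidean model; compactness of $M$ then produces a uniform constant $c_M$.

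Granted the capillary isoperimetric inequality, the lemma follows from the elementary estimate
$$\A(\Omega)\ge c_M |\Omega|^{n/(n+1)} - |c|\,|\Omega| = \bigl(c_M - |c|\,|\Omega|^{1/(n+1)}\bigr)|\Omega|^{n/(n+1)},$$
by choosing $V_0\le V_0'$ so small that $|c|V_0^{1/(n+1)}\le c_M/2$ (taking $V_0:=V_0'$ if $c=0$). One then obtains $\A(\Omega)\ge\tfrac{c_M}{2}|\Omega|^{n/(n+1)}$ whenever $|\Omega|<V_0$, so the claim holds with $\delta:=c_M/2$. The main obstacle in the proof is ensuring uniformity of $c_M$ independently of $\Omega$, and, outside of the paper's regime $\theta\in(0,\tfrac{\pi}{2}]$, handling the case $\cos\theta<0$ where the wetting term competes with the free-surface area and one truly needs the capillary inequality on the half-space rather than its simpler relative version.
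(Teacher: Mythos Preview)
Your proposal is correct and follows the same overall architecture as the paper: first prove a capillary isoperimetric inequality $\mH^n(\partial\Omega\lc\mathring M)+\cos\theta\,\mH^n(\partial\Omega\lc\partial M)\ge c_M|\Omega|^{n/(n+1)}$ for small $|\Omega|$, then absorb the lower-order term $c|\Omega|$. The final step is identical.

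The difference is in how the isoperimetric step is justified when $\cos\theta<0$. You invoke the sharp Euclidean capillary isoperimetric inequality on the half-space and a bi-Lipschitz covering to transfer it to $M$; this is valid but imports a nontrivial external result and a localization argument that would need some care. The paper instead gives a self-contained and more elementary argument: using a compactly supported vector field $X$ with $X=-\nu_{\partial M}$ on $\partial M$ and the divergence theorem, one gets $\mH^n(\partial\Omega\lc\partial M)\le \mH^n(\partial\Omega\lc\mathring M)+\lip(X)|\Omega|$, and since $\mH^n(\partial\Omega\lc\mathring M)\ge\mu|\Omega|^{n/(n+1)}$ by the relative isoperimetric profile, the wetting part is at most $(1+o(1))$ times the free-surface part as $|\Omega|\to 0$. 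Hence $\mH^n(\partial\Omega\lc\mathring M)+\cos\theta\,\mH^n(\partial\Omega\lc\partial M)\ge \tfrac{1}{2}(1-|\cos\theta|)\,\mH^n(\partial\Omega\lc\mathring M)$ for small volume. This avoids both the sharp half-space inequality and any covering. For $\theta\in(0,\tfrac{\pi}{2}]$, on the other hand, your observation that one may simply drop the nonnegative wetting term is cleaner than running the paper's full argument.
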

\begin{proof}
    For $V>0$, consider the following two (relative) isoperimetric problems in $M$ for an open subset $\Omega\subset M$:
    \begin{equation*}
        \begin{split}
                 I_1(V)&=\inf\left\{\frac{\mH^n(\partial\Omega\lc \mathring{M})}{\mH^n(\partial\Omega\lc \partial M)}: \mH^{n+1}(\Omega)=V, \Omega\subset M \right\},  \\
             I_2(V)&=\inf \left\{ \mH^n(\partial \Omega\lc \mathring{M}), \mH^{n+1}(\Omega)=V, \Omega\subset M\right\}.
        \end{split}
    \end{equation*}
    $I_2(V)$ is the isoperimetric profile of $M$. It is known from \cite[Proposition 2.1]{BayleRosales2005isoperimetric} that for all sufficiently small $V$, $I_2(V)\ge \mu V^{\frac{n}{n+1}}$. Here $\mu$ is an explicit constant depending only on $n$. 
    
    Since $M$ is compact and $\partial M$ is $C^2$, there exists a vector field $X\in \X(M)$ with $X=-\nu_{\partial M}$ on $\partial M$ and $|X|\le 1$. For instance, let $r_0>0$ be the injectivity radius of $M$. Take a cut-off function $\phi:[0,\infty)\rightarrow [0,1]$, such that  $\phi=1$ on $[0,r_0/2]$, $\phi=0$ on $[r_0,\infty)$, and let $X=-\phi\nabla \dist(\cdot, \partial M)$. Then for any $\Omega\in \C(M)$, $|\Omega|=V$, we have
    \begin{multline*}
        \mH^n(\partial \Omega\lc \partial M)\le \mH^n(\partial \Omega\lc \mathring{M})+\left|\int_\Omega \Div X\right|\\
            \le \mH^n(\partial \Omega\lc \mathring{M})+\lip(X) V\le \left(1+\frac{\lip(X)V^{\frac{1}{n+1}}}{\mu}\right)\mH^n(\partial \Omega\lc \mathring{M}).
    \end{multline*}
    
    Thus, for any $\ep>0$, there exists $V_0=V_0(M,\ep)$, such that $I_1(V)>1-\ep$ for all $V\in (0,V_0)$. Now fix $\ep=\frac{1}{2}(1-|\cos\theta|)$. Then for all $\Omega\in \C(M)$, $|\Omega|=V\in (0,V_0)$, 
    \[\A(\Omega)>\ep\mH^n(\partial \Omega\cap\mathring{M})-|c|\mH^{n+1}(\Omega)>\ep \mu V^{\frac{n}{n+1}}-|c|V>\delta V^{\frac{n}{n+1}},\]
    by letting $\delta=\ep\mu/2$ and possibly further shrinking $V_0=V_0(M,\theta,c)$.
\end{proof}

\begin{theorem}
There exists a homotopy class $\Pi$ satisfying (\ref{E:nontriviality}) for any $c\in\R$ and $\theta\in(0, \frac{\pi}{2}]$. 
\end{theorem}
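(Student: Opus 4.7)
My plan is to construct an explicit sweepout $\Phi_0$ and then apply Lemma \ref{lemma.A>0.for.small.region} both at ``small volume'' and at ``small complement volume'' to obtain a strict lower bound on $\sup_t \A(\Psi(t))$ that is uniform across the entire associated homotopy class $\Pi$. For $\Phi_0$, I would take the distance-ball sweepout $\Phi_0(t) = \{x\in M : \dist_M(x,p) \leq tD\}$ from a fixed interior point $p\in\mathring M$, with $D>\operatorname{diam}(M)$; this gives $\Phi_0(0)=\emptyset$ and $\Phi_0(1)=M$, and $\mF$-continuity holds away from the cut locus times of $p$ by smoothness of the distance function, with the remaining isolated times handled by a standard interpolation (alternatively one may use the Morse-theoretic sweepout as in \cite{Li-Zhou16}).

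The key observation is a symmetry identity for $\A$. Since the reduced interior boundaries of $\Omega$ and $M\setminus\Omega$ coincide and $\mH^n(\partial\Omega\lc\partial M)+\mH^n(\partial(M\setminus\Omega)\lc\partial M)=\mH^n(\partial M)$, a direct computation gives
\[\A_{c,\theta}(\Omega) \;=\; \A_{c,\theta}(M) \;+\; \A_{-c,\pi-\theta}(M\setminus\Omega),\]
where the subscripts indicate the parameters of the capillary functional \eqref{E: A}. Applying Lemma \ref{lemma.A>0.for.small.region} with parameters $(c,\theta)$ yields constants $\delta,V_0>0$ depending only on $(M,c,\theta)$ with $\A_{c,\theta}(\Omega)>\delta|\Omega|^{n/(n+1)}$ whenever $|\Omega|<V_0$. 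Applying the lemma with parameters $(-c,\pi-\theta)$ to the complement $M\setminus\Omega$ (admissible since $|\cos(\pi-\theta)|=|\cos\theta|<1$) and combining with the above symmetry gives $\delta',V_0'>0$ with
\[\A_{c,\theta}(\Omega) \;>\; \A_{c,\theta}(M) \;+\; \delta'\,|M\setminus\Omega|^{n/(n+1)} \qquad\text{when }\ |M\setminus\Omega|<V_0'.\]

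To conclude, fix any $\Psi\in\Pi$. Since $\mF$-convergence on $\C(M)\cong\bI_{n+1}(M,\mZ_2)$ implies convergence of $(n+1)$-dimensional mass, the map $t\mapsto|\Psi(t)|$ is continuous on $I$ with $|\Psi(0)|=0$ and $|\Psi(1)|=\vol(M)$. Fix any $V\in(0,V_0)$ and $V'\in(0,V_0')$; by the intermediate value theorem there exist $t_1,t_2\in I$ with $|\Psi(t_1)|=V$ and $|M\setminus\Psi(t_2)|=V'$, hence
\[\sup_{t\in I}\A(\Psi(t)) \;\geq\; \max\!\bigl\{\delta V^{n/(n+1)},\ \A(M)+\delta'(V')^{n/(n+1)}\bigr\} \;>\; \max\{0,\A(M)\}.\]
The right-hand lower bound is a positive constant independent of $\Psi$, so taking the infimum over $\Psi\in\Pi$ yields $\bL(\Pi)>\max\{\A(\emptyset),\A(M)\}$ as required. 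The only real subtlety is the dual application of Lemma \ref{lemma.A>0.for.small.region} via the symmetry identity; the sweepout construction itself and the continuity of the volume functional in the $\mF$-topology are standard.
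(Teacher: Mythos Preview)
Your proof is correct and follows essentially the same approach as the paper: both construct a Morse-function sweepout, invoke Lemma~\ref{lemma.A>0.for.small.region} at small volume, and then apply it again to the complement via the $(c,\theta)\leftrightarrow(-c,\pi-\theta)$ duality (your symmetry identity $\A_{c,\theta}(\Omega)=\A_{c,\theta}(M)+\A_{-c,\pi-\theta}(M\setminus\Omega)$ is exactly the computation the paper does by hand in its second case). Your only soft spot is the distance-ball sweepout, where $\mF$-continuity at cut-locus times is not as clean as you suggest, but since you explicitly offer the Morse-theoretic alternative---which is precisely what the paper uses---this is harmless.
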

\begin{proof}
Take a Morse function $\phi: M\rightarrow [0,1]$, and define a sweepout $\Phi_0:[0,1]\rightarrow \C(M)$ by letting $\Phi_0(t)=\{x\in M,\phi(x)<t\}$. Let $\Pi$ be the set of sweepouts homotopic to $\Phi_0$, and $\Phi\in \Pi$. We prove $\sup_{x\in [0,1]}\A(\Phi(x))>0$ uniformly in $\Phi$.

We separate the proof into two cases. Suppose first that $\cos\theta\cdot\mH^n(\partial M)-c\vol(M)\le 0$. Let $V_0$ be given as in Lemma \ref{lemma.A>0.for.small.region}. Since $\Phi$ is continuous in the $\mF$-topology, there exists $x_0$ such that $\mH^n(\Phi(x_0))\in (V_0/2,V_0)$. Therefore $\A(\Phi(x_0))>\delta \mH^{n+1}\Phi(x_0))^{\frac{n}{n+1}}>\delta(V_0/2)^{\frac{n}{n+1}}$. Since this holds for any $\Phi\in \Pi$, we conclude that $\bL(\Pi)>\delta(V_0/2)^{\frac{n}{n+1}}$.

Now suppose that $\cos\theta\cdot\mH^n(\partial M)-c\vol(M)> 0$. As before there exists $x_0$ such that $\mH^{n+1}(\Phi(x_0))\in (\vol(M)-V_0,\vol(M)-V_0/2)$. Denote $\Omega=M\setminus \Phi(x_0)$. Applying Lemma \ref{lemma.A>0.for.small.region} to $\Omega$, with $\pi-\theta$ in place of $\theta$ and $-c$ in place of $c$, we obtain
\[\mH^n(\partial \Omega\lc \mathring{M})-\cos\theta \mH^n(\partial \Omega\lc \partial M)+ c\mH^{n+1}(\Omega)>\delta(V_0/2)^{\frac{n}{n+1}}.\]
Adding $\cos\theta\cdot \mH^n(\partial M)-c\vol(M)$ to both sides, we obtain that $\A(\Phi(x_0))>\cos\theta\cdot\mH^n(\partial M)+ \vol(M)+\delta (V_0/2)^{\frac{n}{n+1}}$. Thus $\bL(\Pi)>\cos\theta\cdot\mH^n(\partial M)+ \vol(M)+\delta (V_0/2)^{\frac{n}{n+1}}$.
\end{proof}


Now we sketch the {\bf pull-tight} process applied to $\{\Phi_i\}$. The goal is to make sure that every element in the critical set $\bC(\{\Phi_i\})$ has uniformly bounded first variation with respect to boundary preserving diffeomorphisms. 

\begin{definition}
$V\in \V_n(M)$ is said to have {\em $c$-bounded first variation with respect to $\X_{\tan}(M)$} (or $\X_{\tan}(U)$ for a given relatively open subset $U\subset M$) if:
\[ \int_{G_n(M)} \Div_S X(x) dV(x, S) \leq c\cdot \int_M |X(x)| d\mu_V(x), \, \forall X\in\X_{tan}(M)\, (\text{or} \in \X_{tan}(U)).\]
\end{definition}

\vspace{1em}
Let $L^c=2(\bL(\Pi)+c\vol(M))$. Denote 
\[ \begin{split}
A^c_\infty =
     & \{V\in\V_n(M): \|V\|(M)\leq L^c, V \text{ has $c$-bounded first variation} \\
     & \text{with respect to } \X_{tan}(M)\}.
\end{split}\]

We can follow \cite[Section 4]{Zhou-Zhu17} by changing $\X(M)$ to $\X_{tan}(M)$ to construct a continuous map: 
\[ H: [0, 1]\times (\C(M), \mF)\cap\{\M(\partial \Om)\leq L^c\}\to (\C(M), \mF)\cap \{\M(\partial \Om)\leq L^c\} \]
such that:
\begin{itemize}
\item[(\rom{1})] $H(0, \Om)=\Om$ for all $\Om$;
\item[(\rom{2})] $H(t, \Om)=\Om$ if $|\partial^\theta\Om|\in A^c_\infty$;
\item[(\rom{3})] if $|\partial^\theta\Om|\notin A^c_\infty$,
\[ \A(H(1, \Om))-\A(\Om)\leq -L(\mF(|\partial\Om|, A^c_\infty))<0; \]
here $L: [0, \infty) \to [0, \infty)$ is a continuous function with $L(0)=0$, $L(t)>0$ when $t>0$.
\end{itemize}

\begin{lemma}
\label{L:tightening lemma}
Given a min-max sequence $\{\Phi_i^*\}_{i\in\N}\in\Pi$, we define $\Phi_i(x)=H(1, \Phi^*_i(x))$ for every $x\in I$. Then $\{\Phi_i\}_{i\in\N}$ is also a min-max sequence in $\Pi$. Moreover, $\bC(\{\Phi_i\})\subset \bC(\{\Phi^*_i\})$ and every element of $\bC(\{\Phi_i\})$ has $c$-bounded first variation with respect to $\X_{\tan}(M)$.
\end{lemma}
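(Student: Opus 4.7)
The plan is to establish the three assertions in sequence: (a) each $\Phi_i$ belongs to $\Pi$, (b) $\{\Phi_i\}$ is a min-max sequence, and (c) any $V\in\bC(\{\Phi_i\})$ lies in $\bC(\{\Phi^*_i\})\cap A^c_\infty$. For (a), observe that $|\pr^\theta \emptyset|$ is the zero varifold and $|\pr^\theta M|=\cos\theta\,|\pr M|$ has vanishing first variation against $\X_{tan}(M)$, since such $X$ are tangent to $\pr M$. Both varifolds lie in $A^c_\infty$, so property (\rom{2}) of $H$ gives $H(t,\emptyset)=\emptyset$ and $H(t,M)=M$, hence $\Phi_i(0)=\emptyset$ and $\Phi_i(1)=M$. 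Since $(t,x)\mapsto H(t,\Phi^*_i(x))$ is $\mF$-continuous, it is an explicit homotopy from $\Phi^*_i$ to $\Phi_i$, so $\Phi_i\in\Pi$. For (b), properties (\rom{2}) and (\rom{3}) together yield $\A(H(1,\Om))\leq \A(\Om)$ for every $\Om\in\C(M)$, whence $\bL(\Phi_i)\leq\bL(\Phi^*_i)$ and thus $\bL(\{\Phi_i\})\leq\bL(\Pi)$; the reverse inequality is automatic.

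The substantive content is (c). Let $V_j:=|\pr^\theta\Phi_{i_j}(x_j)|\to V$ with $\A(\Phi_{i_j}(x_j))\to\bL(\Pi)$, and put $W_j:=|\pr^\theta\Phi^*_{i_j}(x_j)|$. The squeeze
\[ \A(\Phi_{i_j}(x_j))\leq \A(\Phi^*_{i_j}(x_j))\leq \bL(\Phi^*_{i_j})\to\bL(\Pi) \]
forces $\A(\Phi^*_{i_j}(x_j))\to\bL(\Pi)$, so after passing to a subsequence $W_j\to V'$ for some $V'\in\bC(\{\Phi^*_i\})$. Property (\rom{3}) further supplies
\[ 0\leq L\bigl(\mF(W_j,A^c_\infty)\bigr)\leq \A(\Phi^*_{i_j}(x_j))-\A(\Phi_{i_j}(x_j))\to 0, \]
so $\mF(W_j,A^c_\infty)\to 0$; closedness of $A^c_\infty$ under $\mF$-limits then forces $V'\in A^c_\infty$.

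The remaining, and main, obstacle is to identify $V=V'$. For this I will invoke the explicit construction of $H$ from \cite[Section 4]{Zhou-Zhu17}, adapted to the present setting by replacing $\X(M)$ with $\X_{tan}(M)$ so that the barrier $\pr M$ is respected. In that construction, $H(t,\Om)$ is obtained by flowing $\Om$ along vector fields $X_\Om\in\X_{tan}(M)$ whose $C^0$-norms are controlled by a continuous function of $\mF(|\pr^\theta\Om|,A^c_\infty)$ vanishing at $A^c_\infty$. Since $\mF(W_j,A^c_\infty)\to 0$, the flows carrying $\Phi^*_{i_j}(x_j)$ to $\Phi_{i_j}(x_j)$ are $C^0$-close to the identity, giving $\mF(\Phi^*_{i_j}(x_j),\Phi_{i_j}(x_j))\to 0$ as Caccioppoli sets. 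Corollary \ref{C:various closeness} then upgrades this to $\mF(W_j,V_j)\to 0$, yielding $V=V'$, which delivers simultaneously the containment $V\in\bC(\{\Phi^*_i\})$ and the $c$-bounded first variation. The adaptation of the Pitts pull-tight procedure to boundary-tangential vector fields should be essentially routine given the capillary continuity in Lemma \ref{L:weak convergence of reduced currents} and the $\mF$-closedness of $A^c_\infty$ (which follows from Allard-type compactness for varifolds with $c$-bounded first variation against $\X_{tan}(M)$).
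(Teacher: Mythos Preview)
Your proof is essentially correct, and parts (a) and (b) are handled just as in the paper (with slightly more care on the endpoints, which is welcome). For part (c), however, you take a different route than the paper for the key identification $V=V'$.

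The paper's argument is more abstract: it observes that $H$, being built from flows of tangential vector fields, extends naturally to a continuous map on the varifold space $\{V\in\V_n(M):\|V\|(M)\leq L^c\}$, and that $\partial^\theta$ commutes with pushforward by boundary-preserving diffeomorphisms (Proposition~\ref{L:communicates lemma}). This yields the clean chain
\[ V=\lim_j |\partial^\theta H(1,\Phi^*_{i_j}(x_j))| = H(1,\lim_j |\partial^\theta\Phi^*_{i_j}(x_j)|)=H(1,V^*)=V^*, \]
the last equality coming from $V^*\in A^c_\infty$ and property~(\rom{2}).

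Your approach instead unpacks the pull-tight construction to argue that the vector fields damp to zero near $A^c_\infty$, so the diffeomorphisms $f_j$ converge to the identity and hence $V_j$ and $W_j$ merge. This is valid in spirit, but your invocation of Corollary~\ref{C:various closeness} is imprecise: that corollary requires one of the two sets to range in a fixed $\mF$-compact subset $\mathcal S\subset\C(M)$, which you have not supplied. The fix is easy---and in fact closer to the paper: since $\Phi_{i_j}(x_j)=f_j(\Phi^*_{i_j}(x_j))$ with $f_j\to\mathrm{id}$ in $C^1$, Proposition~\ref{L:communicates lemma} gives $V_j=(f_j)_\# W_j$, and then $W_j\to V'$ with bounded mass plus $f_j\to\mathrm{id}$ yields $V_j\to V'$ directly, bypassing Corollary~\ref{C:various closeness}. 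What your approach buys is concreteness; what the paper's buys is that one never needs to look inside the construction of $H$---only its continuity on varifolds and property~(\rom{2}).
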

\begin{proof}
By continuity of $H$, we know that $\Phi_i$ is homotopic to $\Phi^*_i$ in the flat topology, so $\{\Phi_i\}\in\Pi$. By (\rom{2})(\rom{3}), $\A(\Phi_i(x))\leq \A(\Phi^*_i(x))$ for every $x\in I$, so $\{\Phi_i\}$ is also a min-max sequence. Finally, given any $V\in \bC(\{\Phi_i\})$, then $V=\lim_{j\to\infty}|\partial^\theta \Phi_{i_j}(x_j)|$ where $\lim_{j\to\infty}\A(\Phi_{i_j}(x_j))=\bL$. Denote $V^*=\lim_{j\to\infty}|\partial^\theta \Phi^*_{i_j}(x_j)|$. By (\rom{3}), $\lim_{j\to\infty}\mF(|\partial \Phi^*_{i_j}(x_j)|, A^c_\infty)=0$ (as $\lim_{j\to\infty}\A(\Phi_{i_j}(x_j))=\lim_{j\to\infty}\A(\Phi^*_{i_j}(x_j))=\bL$), so $V^*\in A^c_\infty$. On the other hand,
\[ V= \lim_{j\to\infty}|\partial^\theta H(1, \Phi^*_{i_j}(x_j))| = H(1, \lim_{j\to\infty} |\partial^\theta\Phi^*_{i_j}(x_j)|)=H(1, V^*)=V^*.\]
We used Proposition \ref{L:communicates lemma} in the second equality. Note that $H$ is also well defined as a continuous map $H: [0, 1]\times \{V\in\V_n(M), \|V\|(M)\leq L^c\} \to \{V\in\V_n(M), \|V\|(M)\leq L^c\}$. Hence $\bC(\{\Phi_i\})\subset \bC(\{\Phi^*_i\})$ and the proof is finished.
\end{proof}

\section{Almost minimizing capillary varifolds}
\label{S:AM}

In this section we define almost minimizing varifolds with respect to the capillary functional $\A$, and establish the existence and properties of replacements. We fix $c\in \mathbb{R}$ and $\theta\in (0, \frac{\pi}{2})$. 

\begin{definition}[capillary almost minimizing varifolds]
\label{D:c-am-varifolds}
Let $\doubleunderline{\nu}$ be the $\F$, $\M$-norms or the $\mF$-metric. For any given $\ep, \de>0$ and a relative open subset $U\subset M$, we define $\sA(U; \ep, \de; \doubleunderline{\nu})$ to be the set of all $\Om\in\C(M)$ such that if $\Om=\Om_0, \Om_1, \Om_2, \cdots, \Om_m\in\C(M)$ is a sequence with:
\begin{itemize}
\item[(i)] $\spt(\Om_i-\Om)\subset U$;
\item[(ii)] $\doubleunderline{\nu}(\Om_{i+1}, \Om_i)\leq \de$;
\item[(iii)] $\A(\Om_i)\leq \A(\Om)+\de$, for $i=1, \cdots, m$,
\end{itemize}
then $\A(\Om_m)\geq \A(\Om)-\ep$.

We say that a varifold $V\in\V_n(M)$ is {\em capillary almost minimizing in $U$} if there exist sequences $\ep_i \to 0$, $\de_i \to 0$, and $\Om_i\in \sA(U; \ep_i, \de_i; \F)$, such that $\mF(|\partial^\theta\Om_i|, V)\leq \ep_i$.
\end{definition}

The following simple fact says that capillary almost minimizing implies $c$-bounded first variation with respect to $\X_{\tan}(U)$.
\begin{lemma}
\label{L:c-am implies c-bd-first-variation}
Let $V\in\V_n(M)$ be capillary almost minimizing in $U$, then $V$ has $|c|$-bounded first variation with respect to $\X_{\tan}(U)$.
\end{lemma}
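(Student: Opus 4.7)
The plan is to exploit the almost minimizing condition along the flow generated by a tangent vector field, conclude that $\delta\A|_{\Om_i}(X) \to 0$, and then convert this into a bound on $\delta V(X)$ via the first variation formula \eqref{E: 1st variation for A}. Since $X \in \X_{tan}(U)$ is tangent to $\pr M$, the generated flow $\phi_t$ preserves $\pr M$, and by Proposition \ref{L:communicates lemma} we have $\pr^\theta(\phi_t\Om_i) = (\phi_t)_*\pr^\theta\Om_i$.

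First I would choose a schedule $T_i \to 0$ satisfying $\ep_i / T_i \to 0$ and $T_i \to 0$ faster than some fixed multiple of $\de_i$ (for instance $T_i = \sqrt{\ep_i\de_i}$, after passing to a subsequence if needed). For each $i$, partition $[0,T_i]$ into $m_i$ subintervals $0 = t_0 < \cdots < t_{m_i} = T_i$ with $m_i$ large enough that $\F(\phi_{t_{k+1}}\Om_i,\phi_{t_k}\Om_i) \leq \de_i$; this is possible because $\F$-distance along the flow is controlled linearly in time by $\|X\|_{C^0}$ and $\A(\Om_i)$, which is uniformly bounded by $\mF(|\pr^\theta\Om_i|,V) \leq \ep_i$. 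Since $t \mapsto \A(\phi_t\Om_i)$ is $C^1$ with derivative bounded uniformly in $i$, the choice $T_i \ll \de_i$ guarantees $\A(\phi_{t_k}\Om_i) \leq \A(\Om_i) + \de_i$. The sets $\phi_{t_k}\Om_i$ differ from $\Om_i$ only in $\spt X \subset U$, so they form an admissible sequence for the class $\sA(U;\ep_i,\de_i;\F)$, and the almost minimizing condition yields $\A(\phi_{T_i}\Om_i) \geq \A(\Om_i) - \ep_i$. A Taylor expansion gives
\[
    T_i \,\delta\A|_{\Om_i}(X) + O(T_i^2) = \A(\phi_{T_i}\Om_i) - \A(\Om_i) \geq -\ep_i,
\]
so $\delta\A|_{\Om_i}(X) \geq -\ep_i/T_i - O(T_i) \to 0$. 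Running the same argument with $-X$ in place of $X$ gives the matching upper bound, hence $\delta\A|_{\Om_i}(X) \to 0$.

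Next I would unpack the first variation formula. Since $X$ is tangent to $\pr M$, the divergence theorem on $M$ gives $\int_{\Om_i} \div X = \int_{\pr\Om_i \cap \mathring M} \langle X,\nu\rangle d\mH^n$ (the $\pr M$ boundary term drops out), so \eqref{E: 1st variation for A} rearranges to
\[
    \delta|\pr^\theta\Om_i|(X) = \delta\A|_{\Om_i}(X) + c\int_{\Om_i}\div X.
\]
The masses $\M(\pr\Om_i)$ are uniformly bounded (via $\mF(|\pr^\theta\Om_i|,V) \leq \ep_i$), so after passing to a subsequence $\Om_i \to \Om_\infty$ in the flat norm by compactness of Caccioppoli sets. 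Then $\int_{\Om_i}\div X \to \int_{\Om_\infty} \div X$ and $\delta|\pr^\theta\Om_i|(X) \to \delta V(X)$, so the previous step yields
\[
    \delta V(X) = c\int_{\Om_\infty} \div X = c\int_{\pr\Om_\infty \cap \mathring M}\langle X,\nu\rangle \,d\mH^n.
\]

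Finally I would extract the desired bound by taking absolute values:
\[
    |\delta V(X)| \leq |c|\int_{\pr\Om_\infty \cap \mathring M} |X| \,d\mH^n \leq |c|\int |X| \,d\mu_{|\pr^\theta\Om_\infty|} \leq |c|\int |X| \,d\mu_V,
\]
where the middle inequality follows because $d\mu_{|\pr^\theta\Om_\infty|} = d\mH^n\lc(\pr\Om_\infty \cap \mathring M) + \cos\theta\, d\mH^n\lc(\pr\Om_\infty \cap \pr M)$ dominates the interior Hausdorff measure, and the last inequality uses $\mu_{|\pr^\theta\Om_\infty|} \leq \mu_V$ as Radon measures, which holds because $\pr^\theta\Om_i \to \pr^\theta\Om_\infty$ weakly as currents (Lemma \ref{L:weak convergence of reduced currents}) while $|\pr^\theta\Om_i| \to V$ as varifolds, so lower semicontinuity of current mass yields the comparison. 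The main technical point is organising the schedule $(T_i, m_i)$ so that each step of the argument is legal; the delicate piece in the final estimate is the semicontinuity step comparing $|\pr^\theta\Om_\infty|$ with $V$, which relies on the fact that varifold convergence upgrades the weak current convergence to control of total mass.
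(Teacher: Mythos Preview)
Your direct approach is different from the paper's contradiction argument, and it has a genuine gap in the scheduling of Step~1. You need $T_i$ to satisfy both $\ep_i/T_i\to 0$ (so the Taylor remainder vanishes) and $CT_i\le \de_i$ (so condition~(iii) in Definition~\ref{D:c-am-varifolds} holds along the flow). Your choice $T_i=\sqrt{\ep_i\de_i}$ gives $\ep_i/T_i=\sqrt{\ep_i/\de_i}$ and $T_i/\de_i=\sqrt{\ep_i/\de_i}$, so both constraints hold only if $\ep_i/\de_i\to 0$; but the definition of capillary almost minimizing imposes \emph{no} relation between $\ep_i$ and $\de_i$ beyond both tending to zero. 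If, say, $\de_i=\ep_i^2$, no schedule works. The paper avoids this entirely by arguing by contradiction: if $V$ fails to have $|c|$-bounded first variation, one finds $X\in\X_{tan}(U)$ with $\de\A|_\Om(X)\le -\tfrac{\ep_0}{2}\int|X|\,d\mu_V<0$ for \emph{every} $\Om$ with $\mF(|\pr^\theta\Om|,V)$ small. The flow of $X$ then \emph{strictly decreases} $\A$ for a uniform time $\tau$, so condition~(iii) is automatic and only the $\F$-step bound~(ii) needs to be arranged (which is trivial by continuity). One then contradicts $\Om_i\in\sA(U;\ep_i,\de_i;\F)$ once $\ep_i<\ep_2$. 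Your argument can be repaired by this same device: if $\de\A|_{\Om_i}(X)\not\to 0$, pass to a subsequence with $\de\A|_{\Om_i}(X)\le -a<0$ (replacing $X$ by $-X$ if necessary), and then the flow decreases $\A$ for a uniform time, yielding the contradiction.

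There is a second, smaller issue in your final step. Your appeal to Lemma~\ref{L:weak convergence of reduced currents} to get $\pr^\theta\Om_i\to\pr^\theta\Om_\infty$ is incorrect: that lemma assumes convergence in the $\mF$-distance, whereas you only have flat convergence $\Om_i\to\Om_\infty$ from compactness. The restriction $\lc\,\iM$ does not commute with weak current limits, so the semicontinuity comparison $\mu_{|\pr^\theta\Om_\infty|}\le \mu_V$ is not justified as written. This is easily bypassed: do not pass to $\Om_\infty$ at all. From $\de|\pr^\theta\Om_i|(X)=\de\A|_{\Om_i}(X)+c\int_{\pr\Om_i}\langle X,\nu\rangle$ and the fact that $\langle X,\nu\rangle=0$ on $\pr\Om_i\cap\pM$, one has $\big|c\int_{\pr\Om_i}\langle X,\nu\rangle\big|\le |c|\int_{\pr\Om_i\cap\iM}|X|\le |c|\int|X|\,d\mu_{|\pr^\theta\Om_i|}$, and the right side converges to $|c|\int|X|\,d\mu_V$ by varifold convergence.
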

\begin{proof}
Suppose not, then there exist $\ep_0>0$ and a smooth vector field $X\in\X_{tan}(U)$ compactly supported in $U$, such that
\[ \left|\int_{G_n(M)} \div_S X(x) dV(x, S)\right|\geq (|c|+\ep_0)\int_M|X|d\mu_V>0. \]
By changing the sign of $X$ if necessary, we have
\[ \int_{G_n(M)} \div_S X(x) dV(x, S)\leq -(|c|+\ep_0)\int_M |X|d\mu_V. \]
By continuity, we can find $\ep_1>0$ small enough depending only on $\ep_0, V, X$, such that if $\Om\in \C(M)$ with $\mF(|\partial^\theta\Om|, V)<2\ep_1$, then 
\[ \de\A|_\Om(X) \leq \int_{\partial\Om} \div_{\partial\Om} X d\mu_\theta + |c|\int_{\partial\Om} |X|d\mu_{\partial\Om}\leq -\frac{\ep_0}{2}\int_{M} |X| d\mu_{V} <0. \]

If $\mF(|\partial^\theta\Om|, V)<\ep_1$, then by deforming $\Om$ along the $1$-parameter flow $\{\Phi^X(t): t\in[0, \tau)\}$ of $X$ for a uniform short time $\tau>0$, we can obtain a $1$-parameter family $\{\Om_t=\Phi^X(\Om)\in\C(M): t\in[0, \tau)\}$, such that $t\to\Om_t$ is continuous under the $\mF$-topology, with $\spt(\Om_t-\Om)\subset U$, $\mF(|\partial^\theta\Om_t|, V)<2\ep_1$ and $\A(\Om_t)\leq \A(\Om_0)=\A(\Om)$ for all $t\in[0, \tau)$, but with $\A(\Om_{\tau})\leq \A(\Om)-\ep_2$ for some $\ep_2>0$ depending only on $\ep_0, \ep_1, V, X$. 

Summarizing the above, given any $\ep<\min\{\ep_1, \ep_2\}$ and $\de>0$, if $\Om\in\C(M)$ and $\mF(|\partial^\theta\Om|, V)<\ep$, then $\Om\notin \sA(U; \ep, \de; \F)$; this contradicts the capillary almost minimizing property of $V$.
\end{proof}

Now we formulate and solve a natural constrained minimization problem which will be used in the construction of replacements.
\begin{lemma}[A constrained minimization problem]
\label{L:minimisation}
Given $\ep, \de>0$, a relatively open subset $U\subset M$ and any $\Om \in \sA(U;\ep,\de;\F)$, fix a compact subset $K\subset U$. Let $\C_\Om$ be the set of all $\La\in\C(M)$ such that there exists a sequence $\Om=\Om_0, \Om_1, \cdots, \Om_m=\La$ in $\C(M)$ satisfying:
 \begin{itemize}
\item[(a)] $\spt(\Om_i-\Om)\subset K$;
\item[(b)] $\F(\Om_i - \Om_{i+1})\leq \de$;
\item[(c)] $\A(\Om_i)\leq \A(\Om)+\de$, for $i=1, \cdots, m$.
\end{itemize}

Then there exists $\Omega^* \in \C(M)$ such that:
\begin{itemize}
\item[(i)] $\Om^* \in \C_\Om$, and \[ \A(\Om^*)=\inf\{\A(\La):\ \La\in\C_\Om\};\]
\item[(ii)] $\Om^*$ is stable and locally $\A$-minimizing in the relative interior $\interior_M(K)$ (relative to $\pM$);
\item[(iii)] $\Om^*\in \sA(U;\ep,\de;\F)$.
\end{itemize}
\end{lemma}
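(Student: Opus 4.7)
The plan is to obtain $\Omega^*$ via the direct method as an $\A$-minimizer in $\C_\Omega$, and then derive properties (ii) and (iii) by concatenating chains. Throughout, I exploit that on $\C(M) = \bI_{n+1}(M,\mZ_2)$ the flat norm $\F$ coincides with the measure of symmetric difference, so the chain's flat-step requirement can be discharged on any set of sufficiently small volume.

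For (i), I take a minimizing sequence $\Lambda_j \in \C_\Omega$ with $\A(\Lambda_j) \searrow \inf_{\C_\Omega}\A$. The uniform bound $\A(\Lambda_j) \leq \A(\Omega) + \de$ forces a uniform mass bound on $\partial \Lambda_j$, so by compactness of Caccioppoli sets a subsequence $\F$-converges to some $\Omega^* \in \C(M)$. Lower semicontinuity of $\A$ gives $\A(\Omega^*) \leq \inf_{\C_\Omega}\A$; noting that $\Omega \in \C_\Omega$ via the trivial length-zero chain, in particular $\A(\Omega^*) \leq \A(\Omega)$. To place $\Omega^*$ into $\C_\Omega$, I pick $j$ large enough that $\F(\Lambda_j, \Omega^*) < \de$ and append $\Omega^*$ as a final term to any chain realising $\Lambda_j \in \C_\Omega$: conditions (a)--(c) persist because $\spt(\Omega^* - \Omega) \subset K$ (by flat closure of the support constraint), the flat step is $<\de$, and $\A(\Omega^*) \leq \A(\Omega) + \de$. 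This gives $\A(\Omega^*) = \inf_{\C_\Omega}\A$.

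For (ii), I show $\Omega^*$ is $\A$-minimizing on every sufficiently small relatively open neighborhood of each $p \in \interior_M(K)$ (a geodesic ball if $p \in \mathring{M}$, a Fermi half-ball if $p \in \partial M$). Choose such a neighborhood $V$ with $V \subset \interior_M(K)$ and $|V| < \de$. For any competitor $\Lambda \in \C(M)$ with $\spt(\Lambda - \Omega^*) \subset V$, the minimizing claim is trivial unless $\A(\Lambda) \leq \A(\Omega^*)$; assuming the latter, I have $\F(\Lambda, \Omega^*) = |\Lambda \triangle \Omega^*| \leq |V| < \de$, $\A(\Lambda) \leq \A(\Omega) + \de$, and $\spt(\Lambda - \Omega) \subset V \cup \spt(\Omega^* - \Omega) \subset K$. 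Appending $\Lambda$ to any chain reaching $\Omega^*$ places $\Lambda \in \C_\Omega$, so minimality of $\Omega^*$ forces $\A(\Lambda) \geq \A(\Omega^*)$. Stability and the remaining regularity assertions then follow from local $\A$-minimality together with Theorem \ref{theorem.regularity.of.minimizers}.

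For (iii), let $\Omega^* = \Lambda_0, \ldots, \Lambda_m$ be any chain admissible for testing $\Omega^* \in \sA(U;\ep,\de;\F)$, and let $\Omega = \Omega_0, \ldots, \Omega_N = \Omega^*$ be any chain in $\C_\Omega$. Their concatenation is a chain starting from $\Omega$ whose support-differences from $\Omega$ lie in $K \cup U = U$, whose consecutive $\F$-steps are $\leq \de$, and whose $\A$-values are $\leq \A(\Omega) + \de$ (the bound on the second half uses $\A(\Lambda_j) \leq \A(\Omega^*) + \de \leq \A(\Omega) + \de$). The hypothesis $\Omega \in \sA(U;\ep,\de;\F)$ then yields $\A(\Lambda_m) \geq \A(\Omega) - \ep \geq \A(\Omega^*) - \ep$. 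The step I anticipate to be most subtle is (ii): both the support condition and the flat-step condition in the chain definition of $\C_\Omega$ must be simultaneously discharged by one size condition on $V$, which works precisely because of the flat-volume equivalence on codimension-zero currents and because $K$ lies in the open set $U$.
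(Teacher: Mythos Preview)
Your argument is correct and follows essentially the same route as the paper: direct method for (i), chain-appending on a small neighborhood for local minimality in (ii), and chain concatenation for (iii). One imprecision in (ii): stability in all of $\interior_M(K)$ does not follow from local $\A$-minimality in small balls (local stability of a quadratic form does not bootstrap to global stability), nor from Theorem~\ref{theorem.regularity.of.minimizers}, which concerns regularity only. As in the paper, stability is instead a direct consequence of minimality in $\C_\Omega$: given any destabilizing flow $\{\Phi_t\}$ supported in $\interior_M(K)$, for small $t$ one has $\F(\Phi_t(\Omega^*),\Omega^*)<\delta$ and $\A(\Phi_t(\Omega^*))\leq \A(\Omega)+\delta$, so $\Phi_t(\Omega^*)\in\C_\Omega$ by your appending argument, contradicting $\A(\Phi_t(\Omega^*))<\A(\Omega^*)$. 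Also note that the lemma itself makes no regularity claim; that is deferred to Proposition~\ref{L:regularity of capillary replacement}.
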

\begin{proof}

The proof follows a similar structure to \cite[Lemma 5.7]{Zhou-Zhu17}:

\textit{Proof of (i):} Take any minimizing sequence $\{\La_j\}\subset\C_{\Om}$, i.e.
\[ \lim_{j \to \infty} \A(\La_j) = \inf\{\A(\La):\ \La\in\C_{\Om}\}.\]
Notice that $\spt(\La_j-\Om) \subset K$ and $\A(\La_j) \leq \A(\Om) + \de$ for all $j$. By standard compactness \cite[Theorem 6.3]{Simon83}, after passing to a subsequence, $\La_j$ converges weakly to some $\Om^*$ with $\Om^* \in\C(M)$ and $\spt(\Om^*-\Om) \subset K$. 

We will show that $\Om^*$ is our desired minimizer. Since $\La_j$ converges weakly to $\Om^*$, we have that $\M(\partial^\theta\Om^*)\leq \lim_{j\to\infty}\M(\partial^\theta\La_j)$ (by Lemma \ref{L:weak convergence of reduced currents}), and $\mH^{n+1}(\Om^*)=\lim_{j\to\infty}\mH^{n+1}(\La_j)$ (by definition of weak convergence). Therefore,
\begin{equation}
\label{E:Om^*}
\A(\Om^*) \leq \lim_{j\to\infty} \A(\La_j) = \inf\{\A(\La):\ \La\in\C_{\Om}\}.
\end{equation}
It remains to show that $\Om^*\in \C_{\Om}$. For $j$ sufficiently large, we have $\F(\La_j-\Om^*)<\de$. Since $\La_j \in \C_{\Om}$, there exists a sequence $\Om=\Om_0, \Om_1, \cdots, \Om_m=\La_j$ in $\C(M)$ satisfying conditions (a-c) above. Consider now the sequence $\Om=\Om_0, \Om_1, \cdots, \Om_m=\La_j, \Om_{m+1}=\Om^*$ in $\C(M)$; it trivially satisfies conditions (a) and (b). Moreover, using (\ref{E:Om^*}), we also have
\[ \A(\Om^*)\leq \A(\La_j)\leq \A(\Om)+\de.\]
Therefore, $\Om^*\in \C_{\Om}$ and hence (i) has been proved. 

\vspace{0.5em}
\textit{Proof of (ii):} For $p$ in the relative interior $\interior_M(K)$ of $K$, we claim that there exists a small relative open ball $\sB_r(p) \subset \interior_M(K)$ such that
\begin{equation}
\label{E:Om^*2}
\A(\Om^*)\leq \A(\La),
\end{equation}
for any $\La\in\C(M)$ with $\spt(\La-\Om^*)\subset \sB_r(p)$. To establish (\ref{E:Om^*2}), first choose $r>0$ small so that $\vol(\tilde B_r(p))<\delta/2$. 
Suppose (\ref{E:Om^*2}) were false, then there exists $\Om'\in\C(M)$ with $\spt(\Om'-\Om^*)\subset \sB_r(p)$ such that $\A(\Om')<\A(\Om^*)$. We will show that $\Om'\in\C_{\Om}$, which contradicts that $\Om^*$ is a minimizer from part (i). 

To see that $\Om' \in \C_\Om$, take a sequence $\Om=\Om_0, \Om_1, \cdots, \Om_m=\Om^*$ in $\C(M)$ satisfying (a-c) above, and append $\Om_{m+1}=\Om'$ to the sequence. Since $\spt(\Om^*-\Om) \subset K$ and $\spt(\Om'-\Om^*) \subset K$, we have $\spt(\Om'-\Om)\subset K$. 
By the facts that $\spt(\Om'-\Om^*)\subset \sB_r(p)$, we have $\F(\partial\Om'-\partial\Om^*) \leq \vol(\tilde B_r(p))<\de$. Finally note $\A(\Om') < \A(\Om^*) \leq \A(\Om)+ \de$. Therefore $\Om'\in\C_{\Om}$.

Similarly we can prove that $\Omega^*$ is stable with respect to the $\A$ functional in $\interior_M(K)$. If this were not true, then there exists a flow $\{\Phi_t\}_{|t|\ll 1}$ supported in $\interior_M(K)$ ($\Phi_t$ preserves $K\cap\pM$ if $K\cap\pM \neq\emptyset$), such that $\A(\Phi_t(\Omega^*))<\A(\Omega^*)$ for $0<|t|\ll 1$. Append $\Omega_{m+1} = \Phi_t(\Omega^*)$ to the sequence $\Om=\Om_0, \Om_1, \cdots, \Om_m=\Om^*$ above for some small enough $0<|t|\ll 1$ with $\F(\partial \Phi_t(\Omega^*), \partial\Omega^*)<\delta$; hence we have $\Phi_t(\Omega^*) \in\C_{\Om}$, which again contradicts $\Omega^*$ being a minimizer. This proves part (ii). 

\vspace{0.5em}
\textit{Proof of (iii):} Suppose that the claim is false. Then by Definition \ref{D:c-am-varifolds} there exists a sequence $\Om^*=\Om^*_0, \Om^*_1, \cdots, \Om^*_\ell $ in $\C(M)$ satisfying
\begin{itemize}
\item $\spt(\Om^*_i-\Om^*)\subset U$;
\item $\F(\Om^*_i - \Om^*_{i+1})\leq \de$;
\item $\A(\Om^*_i)\leq \A(\Om^*)+\de$, for $i=1, \cdots, \ell$,
\end{itemize}
but $\A(\Om^*_\ell) < \A(\Om^*)-\ep$. Since $\Om^* \in \C_\Om$ by part (i), there exists a sequence $\Om=\Om_0, \Om_1, \cdots, \Om_m=\Om^*$ satisfying conditions (a-c) above (with $K$ changed by $U$ in (a)). Then the sequence $\Om=\Om_0,\Om_1,\cdots, \Om_m, \Om^*_1, \cdots, \Om^*_\ell$ in $\C(M)$ still satisfies those conditions (a-c), since $\A(\Om^*)\leq \A(\Om)$ implies that $\A(\Om^*_i)\leq \A(\Om)+\de$. Therefore $\Om\in \sA(U;\ep,\de;\F)$ implies that $\A(\Om^*_\ell)\geq \A(\Om)-\ep\geq \A(\Om^*)-\ep$, which is a contradiction. This proves part (iii). 
\end{proof}

\begin{proposition}[Existence and properties of capillary replacements]
\label{P:good-replacement-property}
Let $V\in\V_n(M)$ be a capillary almost minimizing in a relatively open set $U \subset M$ and $K \subset U$ be a compact subset, then there exists $V^{*}\in \V_n(M)$, called \emph{a capillary replacement of $V$ in $K$} such that
\begin{enumerate}[label=$(\roman*)$]
\item $V\lc (M\backslash K) =V^{*}\lc (M\backslash K)$;
\item $-|c| \vol(K)\leq \|V\|(M)-\|V^{*}\|(M) \leq |c| \vol(K)$;
\item $V^{*}$ is capillary almost minimizing in $U$;
    \item moreover, $V^{*} =\lim_{i \to \infty} |\partial^\theta\Om^*_i|$ as varifolds for some $\Om^*_i\in\C(M)$ such that $\Om^*_i\in \sA(U; \ep_i, \de_i; \F)$ with $\ep_i, \de_i \to 0$; furthermore $\Om^*_i$ is stable and locally minimizes $\A$ in $\interior_M(K)$;
\item if $V$ has $|c|$-bounded first variation with respect to $\X_{tan}(M)$, then so does $V^*$.
\end{enumerate}
\end{proposition}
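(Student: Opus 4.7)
The plan is to build $V^*$ as a varifold limit of minimizers of the constrained problem from Lemma \ref{L:minimisation}. By Definition \ref{D:c-am-varifolds}, there exist sequences $\ep_i, \de_i \to 0$ and $\Om_i \in \sA(U;\ep_i,\de_i;\F)$ with $\mF(|\partial^\theta\Om_i|,V) \leq \ep_i$. For each $i$, I would apply Lemma \ref{L:minimisation} with input $(\Om_i, K, \ep_i, \de_i)$ to obtain $\Om^*_i \in \C_{\Om_i}$ that minimizes $\A$ over $\C_{\Om_i}$, is stable and locally $\A$-minimizing in $\interior_M(K)$, and still lies in $\sA(U;\ep_i,\de_i;\F)$. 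Since $\Om^*_i$ satisfies $\A(\Om^*_i) \leq \A(\Om_i) \leq \A(\Om_i) + \de_i$, the uniform bound $\sup_i \M(\partial^\theta\Om^*_i) < \infty$ follows from $\sup_i\A(\Om_i)<\infty$ together with $|\vol(\Om^*_i)-\vol(\Om_i)|\leq \vol(K)$. After passing to a subsequence, I extract a varifold limit $V^* := \lim_i |\partial^\theta\Om^*_i|$, adjusting the indices so that $\mF(|\partial^\theta\Om^*_i|, V^*) \leq \ep_i$. This immediately gives (iv).

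For (i), the inclusion $\spt(\Om^*_i - \Om_i) \subset K$ (property (a) of $\C_{\Om_i}$) ensures $|\partial^\theta\Om^*_i|\lc(M\setminus K) = |\partial^\theta\Om_i|\lc(M\setminus K)$; passing to the varifold limit via Lemma \ref{L:weak convergence of reduced currents} and Corollary \ref{C:various closeness} yields $V^*\lc(M\setminus K) = V\lc(M\setminus K)$. For (iii), since $\Om^*_i \in \sA(U;\ep_i,\de_i;\F)$ with $\mF(|\partial^\theta\Om^*_i|, V^*) \leq \ep_i$, Definition \ref{D:c-am-varifolds} directly produces the capillary almost minimizing property of $V^*$.

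For (ii), I would use a two-sided comparison. The minimization property gives $\A(\Om^*_i)\leq \A(\Om_i)$, which rearranges to $\M(\partial^\theta\Om^*_i) \leq \M(\partial^\theta\Om_i) + c\,(\vol(\Om^*_i)-\vol(\Om_i)) \leq \M(\partial^\theta\Om_i) + |c|\vol(K)$. Conversely, since $\Om^*_i$ itself belongs to the admissible competitor sequence and $\Om_i \in \sA(U;\ep_i,\de_i;\F)$, the almost-minimizing condition yields $\A(\Om^*_i) \geq \A(\Om_i)-\ep_i$, which gives $\M(\partial^\theta\Om^*_i) \geq \M(\partial^\theta\Om_i) - |c|\vol(K) - \ep_i$. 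Taking $i\to\infty$ and using mass continuity under $\mF$-convergence gives $|\,\|V\|(M)-\|V^*\|(M)\,|\leq |c|\vol(K)$.

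Finally, for (v), suppose $V$ has $|c|$-bounded first variation with respect to $\X_{\tan}(M)$. Lemma \ref{L:c-am implies c-bd-first-variation} applied to $V^*$ already yields the bound against any $X\in\X_{\tan}(U)$. For general $X\in\X_{\tan}(M)$, choose a cutoff $\chi\in C^\infty_c(U)$ with $\chi \equiv 1$ on a neighborhood of $K$; then $\chi X \in \X_{\tan}(U)$ (tangency to $\pM$ is preserved by scalar multiplication) and $(1-\chi)X \in \X_{\tan}(M)$ vanishes near $K$. Decomposing $\delta V^*(X) = \delta V^*(\chi X) + \delta V^*((1-\chi)X)$ and using property (i) to replace $V^*$ by $V$ on $\spt((1-\chi)X)$ gives the required bound. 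The main technical point to be careful about is (ii): one must use both the minimization inequality and the almost-minimizing inequality to control $\|V^*\|(M)$ from above \emph{and} below, because $V^*$ is only approximately, not exactly, a minimizer relative to $V$.
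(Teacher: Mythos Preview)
Your proposal is correct and follows essentially the same approach as the paper: construct $\Om^*_i$ via Lemma~\ref{L:minimisation}, pass to a varifold limit, and verify (i)--(v) using the constrained minimization inequality $\A(\Om_i)-\ep_i\le\A(\Om^*_i)\le\A(\Om_i)$ together with the cutoff trick for (v). The only minor remark is that for (i) the citations of Lemma~\ref{L:weak convergence of reduced currents} and Corollary~\ref{C:various closeness} are not really needed---the equality $V^*\lc(M\setminus K)=V\lc(M\setminus K)$ follows directly from the pointwise identity $|\partial^\theta\Om^*_i|\lc(M\setminus K)=|\partial^\theta\Om_i|\lc(M\setminus K)$ and the definition of varifold convergence on open sets.
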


\begin{proof}
The proof follows similarly to \cite[Proposition 5.8]{Zhou-Zhu17}:

Let $V\in \V_n(M)$ be capillary almost minimizing in $U$. By definition there exists a sequence $\Om_i\in \sA(U; \ep_i, \de_i; \F)$ with $\ep_i, \de_i \to 0$ such that $V$ is the varifold limit of $|\partial^\theta\Om_i|$. By Lemma \ref{L:minimisation} we can construct a minimizer $\Om_i^* \in \C_{\Om_i}$ for each $i$. Since $\M(\partial^\theta\Om_i^*)$ is uniformly bounded, by compactness there exists a subsequence $|\partial^\theta\Om_i^*|$ converging as varifolds to some $V^* \in \V_n(M)$. We claim that $V^*$ satisfies items (i)-(v) in Proposition \ref{P:good-replacement-property} and thus is our desired capillary replacement. 
\begin{itemize}[leftmargin=0.8cm]
\item First, by part (i) of Lemma \ref{L:minimisation}, we have $\Om_i^* \in \C_{\Om_i}$ and thus $\spt(\Om_i^*- \Om_i) \subset K$. Hence the varifold limits satisfy $V^* \lc (M \backslash K) = V \lc (M \backslash K)$. 
\item Second, as $\Om_i \in \sA(U,\ep_i,\de_i; \F)$ and $\Om_i^* \in \C_{\Om_i}$, we have 
\[  \A(\Om_i)-\ep_i\leq \A(\Om_i^*)\leq \A(\Om_i);  \]
thus by (\ref{E:A'}), 
\[ \M(\partial^\theta\Om_i)-c\mH^{n+1}(\Om_i)-\ep_i\leq \M(\partial^\theta\Om_i^*)-c\mH^{n+1}(\Om_i^*)\leq \M(\partial^\theta\Om_i)-c\mH^{n+1}(\Om_i). \] 
Note that $|\mH^{n+1}(\Om_i)-\mH^{n+1}(\Om_i^*)|\leq \vol(K)$; taking $i \to \infty$, we have \[-|c| \vol(K)\leq \|V\|(M)-\|V^{*}\|(M) \leq |c| \vol(K).\] 
\item Since each $\Om_i^* \in \sA(U; \ep_i, \de_i; \F)$ by Lemma \ref{L:minimisation}(iii), by definition $V^*$ is capillary almost minimizing in $U$. 
\item (iv) follows from Lemma \ref{L:minimisation}(ii). 
\item Finally by (iii) and Lemma \ref{L:c-am implies c-bd-first-variation}, $V^*$ has $|c|$-bounded first variation with respect to $\X_{tan}(U)$. By (i) and a standard cutoff trick it is easy to show that $V^*$ has $|c|$-bounded first variation with respect to $\X_{\tan}(M)$ whenever $V$ does. 
\end{itemize}
\end{proof}

Finally, we establish the regularity of our replacements. 

\begin{proposition}
\label{L:regularity of capillary replacement}
Let $V^*$ be as given by Proposition \ref{P:good-replacement-property}. Then $V^*$ is a stable regular capillary varifold in $\interior_M(K)$, in the sense of Definition \ref{definition.regular.capillary.varifold}. 
\end{proposition}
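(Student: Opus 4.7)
The plan is to combine the regularity of the approximating minimizers from Proposition~\ref{P:good-replacement-property}(iv) with the compactness Theorem~\ref{theorem.compactness.almost.properly.embedded.capillary.surfaces}, and then identify the resulting varifold limit with the structure of Definition~\ref{definition.regular.capillary.varifold}. By Proposition~\ref{P:good-replacement-property}(iv), one has $V^* = \lim_i |\partial^\theta \Omega_i^*|$ as varifolds, where each $\Omega_i^* \in \sA(U;\epsilon_i,\delta_i;\F)$ locally minimizes $\A$ in $\interior_M(K)$, with $\epsilon_i,\delta_i \to 0$. First I would apply Taylor's regularity Theorem~\ref{theorem.regularity.of.minimizers} to conclude that $\Sigma_i := \spt(\partial \Omega_i^* \lc \mathring{M}) \cap \interior_M(K)$ is a smooth, properly embedded capillary CMC surface of mean curvature $c$ and contact angle $\theta$. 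Being a local minimizer, $\Sigma_i$ is automatically capillary-stable.

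Next, using that $\|V^*\|(M) \leq \|V\|(M) + |c|\vol(K)$ by Proposition~\ref{P:good-replacement-property}(ii), the surfaces $\Sigma_i$ satisfy a uniform area bound, so the curvature estimate of Theorem~\ref{theorem.curvature.estimate} yields a uniform second fundamental form bound on compact subsets of $\interior_M(K)$. I would then pass to a subsequence (using standard compactness for Caccioppoli sets) so that $\Omega_i^* \to \Omega_\infty^*$ in $\C(M)$; Lemma~\ref{L:weak convergence of reduced currents} then provides $\partial^\theta\Omega_i^* \to \partial^\theta\Omega_\infty^*$ as currents. A further subsequence provided by Theorem~\ref{theorem.compactness.almost.properly.embedded.capillary.surfaces} converges smoothly (with integer multiplicity) to a stable, almost properly embedded capillary CMC surface $\Sigma_\infty$ in $\interior_M(K)$.

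The final step is to identify the varifold limit with the structure required by Definition~\ref{definition.regular.capillary.varifold}. On $\partial M \cap \interior_M(K)$, the boundary portion of $V^*$ is $\cos\theta \cdot |\partial\Omega_\infty^* \lc \partial M|$ by the current convergence and the fact that $\mathcal{F}$- and mass-norms coincide for currents supported on $\partial M$. On $\mathring{M}\cap \interior_M(K)$, the smooth graphical convergence identifies $V^*$ as $|\partial\Omega_\infty^* \lc \mathring{M}|$ plus, in the $c=0$ case, a finite sum $\sum_i m_i \Sigma_{(i)}$ of smooth embedded minimal components with integer multiplicities $m_i \geq 2$, exactly as in Definition~\ref{definition.regular.capillary.varifold}(2); when $c \neq 0$ (or $\theta\neq \pi/2$ and each component has nonempty boundary), Theorem~\ref{theorem.compactness.almost.properly.embedded.capillary.surfaces}(1) forces multiplicity one, yielding $V^* \lc \interior_M(K) = |\partial^\theta\Omega_\infty^*|$. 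Stability of $V^*$ for $\A$ in $\interior_M(K)$ is inherited from the minimality (hence stability) of each $\Omega_i^*$ via the smooth convergence on the regular part.

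The main obstacle will be coordinating the two modes of convergence---varifold convergence from Proposition~\ref{P:good-replacement-property} and smooth convergence with integer multiplicity from Theorem~\ref{theorem.compactness.almost.properly.embedded.capillary.surfaces}---to ensure that no mass is lost or gained at the barrier-touching set $\Sigma_\infty \cap \partial M$, where the density may take several distinct values $\tfrac{1+\cos\theta}{2}$, $1$, or $1+\cos\theta$ depending on whether the point is a properly embedded boundary point, a self-touching boundary point, or an interior barrier-touching point. Ensuring consistency of these densities requires a careful local analysis near $\partial M$, using the maximum principle (Lemma~\ref{lemma.strong.max.principle.embedded}) to rule out anomalous sheet configurations, together with Lemma~\ref{L:weak convergence of reduced currents} to control the $\partial M$-portion.
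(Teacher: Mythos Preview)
Your proposal is correct and follows essentially the same approach as the paper's proof, which is considerably terser: the paper simply observes that by Proposition~\ref{P:good-replacement-property}(iv) and Theorem~\ref{theorem.regularity.of.minimizers}, $V^*$ is a limit of stable properly embedded capillary surfaces in $\interior_M(K)$, and then invokes Theorem~\ref{theorem.compactness.almost.properly.embedded.capillary.surfaces} to conclude the limit is a stable regular capillary varifold. Your final ``main obstacle''---reconciling the varifold and smooth limits at the barrier-touching set---is already absorbed in the conclusion of Theorem~\ref{theorem.compactness.almost.properly.embedded.capillary.surfaces}, so no separate local analysis is required here.
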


\begin{proof}
By Lemma \ref{P:good-replacement-property}(iv) and Theorem \ref{theorem.regularity.of.minimizers}, $V^*$ is a limit of stable, properly embedded capillary surfaces in $\interior_M(K)$. Theorem \ref{theorem.compactness.almost.properly.embedded.capillary.surfaces} implies that this is a smooth limit, which implies the result. 
\end{proof}

\section{Existence of Almost minimizing capillary varifolds}
\label{S:existence}

In this section, we complete the pull-tight process by showing that there is a min-max varifold which has $c$-bounded first variation, and is capillary almost minimizing. 

Let $p\in M$ and $r>0$. Assume that $r<\dist_M(p, \partial M)$ if $p\notin\partial M$. Recall that we defined the open annular neighbourhoods as 
\[ \wt{\An}_{s,r}(p) = \begin{cases} \wt{B}_r(p)\setminus \Clos(\wt{B}_s(p)), & p\in\mathring{M} \\ \wt{B}^+_r(p)\setminus \Clos(\wt{B}^+_s(p)), & p \in \partial M.\end{cases}\]

\begin{definition}
\label{D:almost minimizing in annuli}
A varifold $V\in\V_n(M)$ is called {\em capillary almost minimizing in small annuli}, if for each $p\in M$, there exists $r_{am}(p)>0$ such that $V$ is capillary almost minimizing in $\An_{s, r}(p)$ for all $0<s<r\leq r_{am}(p)$. If $p\notin \partial M$, we further require that $r_{am}(p)<\dist_{\R^L}(p, \partial M)$. (We may equivalently use the intrinsic annuli $\wt{\An}_{s, r}(p)$ as they are comparable at small scales.)
\end{definition} 

We need the following equivalence result among several capillary almost minimizing concepts using the three topologies induced by $\F$, $\mF$, and $\M$. It implies that we can work with the strongest $\M$-norm at the expense of shrinking the annuli. The proof follows from a standard interpolation process (see \cite[Proof of Proposition 5.3(c) on page 483]{Zhou-Zhu17}) using Lemma \ref{L:interpolation lemma} in place of \cite[Lemma A.1]{Zhou-Zhu17}.

\begin{proposition}
\label{P:equivalence of a.m.}
Given $V\in \V_n(M)$, the following statements satisfy $(a)\Longrightarrow (b)\Longrightarrow (c)\Longrightarrow (d)$:
\begin{itemize}
    \item[(a)] $V$ is capillary almost minimizing in some open set $U\subset M$;
    \item[(b)] For any $\ep>0$, there exists $\de>0$ and $\Om\in \sA(U; \ep, \de; \mF)$ such that $\mF(|\partial^\theta\Om|, V)\leq \ep$;
    \item[(c)] For any $\ep>0$, there exists $\de>0$ and $\Om\in \sA(U; \ep, \de; \M)$ such that $\mF(|\partial^\theta\Om|, V)\leq \ep$;
    \item[(d)] $V$ is capillary almost minimizing in $W$ for any relatively open set $W\subset\subset U$ with compact closure in $U$.
\end{itemize}
\end{proposition}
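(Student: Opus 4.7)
My plan is to follow the interpolation argument of \cite[Proof of Proposition 5.3(c)]{Zhou-Zhu17}, replacing their technical interpolation lemma \cite[Lemma A.1]{Zhou-Zhu17} by its capillary counterpart, Lemma \ref{L:interpolation lemma}. The implication $(a) \Rightarrow (b)$ is immediate from the definitions: since $\mF(\Om', \Om'') \ge \F(\Om' - \Om'')$ on $\C(M)$, every $\mF$-close testing sequence is automatically $\F$-close at the same threshold, so $\sA(U; \ep, \de; \F) \subseteq \sA(U; \ep, \de; \mF)$, and choosing $\Om = \Om_i$ for $i$ large enough in the sequence witnessing (a) yields (b).

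For $(b) \Rightarrow (c)$ and $(c) \Rightarrow (d)$ the common strategy is: given a testing sequence $\Om = \Om_0, \dots, \Om_m$ close in a weaker metric (in which the hypothesis is formulated), refine it via Lemma \ref{L:interpolation lemma} into a longer sequence close in the stronger metric, with supports still contained in $U$ and with $\A$ along the refinement exceeding $\max(\A(\Om_i), \A(\Om_{i+1}))$ by at most a prescribed small amount; then the almost-minimizing property in the stronger metric forces the desired lower bound on $\A(\Om_m)$. Concretely, for $(b) \Rightarrow (c)$ the refinement turns $\M$-small steps into $\mF$-small ones, and for $(c) \Rightarrow (d)$ it turns $\F$-small steps into $\M$-small ones. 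The extra point in $(c) \Rightarrow (d)$ is the shrinkage: fixing an intermediate open set $W \subset\subset W' \subset\subset U$, one arranges Lemma \ref{L:interpolation lemma} so that the refinement of any $\F$-close chain supported in $W$ stays in $W'$, and therefore in $U$, where the $\M$-hypothesis applies.

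The main obstacle is Lemma \ref{L:interpolation lemma} itself. The capillary functional $\A$ weights the portion of $\pr\Om$ along $\pr M$ by $\cos\theta \ne 1$, so any interpolation that redistributes mass between $\pr\Om\lc \iM$ and $\pr\Om\lc \pM$ must track this weight precisely. Working with the capillary boundary current $\pr^\theta\Om$ and exploiting the $\mF$-continuity of $\Om \mapsto \pr^\theta\Om$ recorded in Lemma \ref{L:weak convergence of reduced currents} and Corollary \ref{C:various closeness}, the Pitts-style interpolation carries over; once Lemma \ref{L:interpolation lemma} is in place, the rest of the proof is a direct transcription of the corresponding argument in \cite{Zhou-Zhu17}.
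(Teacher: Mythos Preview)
Your proposal is correct and matches the paper's own proof, which likewise defers to \cite[Proof of Proposition 5.3(c)]{Zhou-Zhu17} with Lemma \ref{L:interpolation lemma} substituted for \cite[Lemma A.1]{Zhou-Zhu17}. One small clarification: just as you observe for $(a)\Rightarrow(b)$, the implication $(b)\Rightarrow(c)$ is also a straightforward containment $\sA(U;\ep,\de;\mF)\subseteq \sA(U;\ep,\de;\M)$ (an $\M$-small step is automatically $\mF$-small), so the interpolation lemma is only genuinely needed for $(c)\Rightarrow(d)$, where an $\F$-close chain in $W$ must be refined to an $\M$-close chain supported in $U$.
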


\begin{theorem}
\label{T:existence of almost minimizing capillary varifolds}
Under the assumption of Theorem \ref{T:main1}, there exists $V\in \bC(\{\Phi_i\})$ such that 
\begin{itemize}
    \item $V$ has $c$-bounded first variation with respect to $\X_{tan}(M)$;
    \item $V$ is capillary almost minimizing in small annuli.
\end{itemize} 
\end{theorem}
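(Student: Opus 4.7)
The plan is to first pull the min-max sequence tight to secure $c$-bounded first variation, and then run a one-parameter Almgren-Pitts combinatorial argument to extract a critical varifold that is additionally capillary almost minimizing in small annuli.

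First I would apply Lemma \ref{L:tightening lemma} to replace the given $\{\Phi_i\}$ by a tightened min-max sequence whose critical set $\bC(\{\Phi_i\})$ is contained in the original one and every element of which has $c$-bounded first variation with respect to $\X_{\tan}(M)$. It then remains to produce at least one $V \in \bC(\{\Phi_i\})$ that is capillary almost minimizing in small annuli, and this I would establish by contradiction, in the spirit of \cite{Pitts}, \cite[\S6]{Zhou-Zhu17}, and \cite[\S5]{Li-Zhou16}. Assume no such $V$ exists. Then for each $V \in \bC(\{\Phi_i\})$ and each $p \in \spt\|V\|$, there exists a scale at which $V$ is not capillary almost minimizing in the annulus $\wt{\An}_{s,r}(p)$. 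One then selects, for each $V$, a finite collection of disjoint small annuli (two suffice per $V$ since the parameter space $I$ is one-dimensional) in which $V$ fails the almost-minimizing condition simultaneously.

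The failure of the almost-minimizing property furnished by Definition \ref{D:c-am-varifolds} produces, for every $\Omega$ sufficiently $\F$-close to $V$, a sequence of Caccioppoli deformations supported in each chosen annulus that decreases $\A$ by a definite amount. I would use Proposition \ref{P:equivalence of a.m.} to upgrade the failure from the $\F$-topology to the stronger $\M$-topology after slightly shrinking annuli, and Corollary \ref{C:various closeness} to ensure continuity of the interpolation in the $\mF$-topology. Proposition \ref{L:communicates lemma} guarantees that the boundary-preserving diffeomorphisms employed commute with the capillary boundary operator $\partial^\theta$, so the local deformations can be patched over $x \in I$ to give a modified sweepout $\tilde\Phi_i \in \Pi$. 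A uniform accounting, arranged so that the deformations are trivial near the endpoints $\emptyset, M$ (which is possible thanks to (\ref{E:nontriviality})), would then yield $\sup_{x} \A(\tilde\Phi_i(x)) \leq \bL(\Pi) - \eta$ for some $\eta > 0$ independent of large $i$, contradicting the min-max definition of $\bL(\Pi)$.

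The hardest step will be carrying out the annular deformations compatibly with the capillary structure at points $p \in \pM$: one must work with the Fermi half-annuli $\wt{\An}^+_{s,r}(p)$, use flows generated by vector fields $X \in \X_{\tan}(U)$ so that the weighted boundary measure $d\mu_\theta$ is transformed correctly, and verify that interpolation between deformed and undeformed sweepout elements is continuous in the $\mF$-metric rather than merely in $\F$. The equivalence result of Proposition \ref{P:equivalence of a.m.} is what makes this possible, reducing the combinatorial argument to the same structure as in the closed or free-boundary cases while incorporating the $\cos\theta$-weighted boundary term from Definition \ref{D:capillary boundary varifold}.
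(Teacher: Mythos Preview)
Your overall strategy---pull-tight via Lemma~\ref{L:tightening lemma}, then contradiction through an Almgren--Pitts combinatorial argument---matches the paper's approach. However, you gloss over a step that the paper treats as the technical core: the passage between continuous and discrete sweepouts.

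The Almgren--Pitts combinatorial deformation is fundamentally a discrete construction; it is not performed directly on the continuous maps $\Phi_i: I \to (\C(M),\mF)$. The paper first discretizes each $\Phi_i$ to a map $\varphi_i$ on a cell complex using \cite[Theorem~3.3]{Sun-Wang-Zhou20}, runs the combinatorics on the discrete sequence $S=\{\varphi_i\}$ to produce a competitor $\widetilde{S}=\{\widetilde{\varphi}_i\}$ with strictly smaller critical value, and then interpolates $\widetilde{\varphi}_i$ back to a continuous $\widetilde{\Phi}_i \in \Pi$ via \cite[Theorem~3.4, Proposition~3.5]{Sun-Wang-Zhou20}. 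Your description of ``patching local deformations over $x \in I$'' does not explain how one stays in the homotopy class $\Pi$ or why the resulting map is $\mF$-continuous; this is precisely what the discretization/interpolation machinery provides.

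There is also a capillary-specific subtlety you do not address. The discretization theorem from \cite{Sun-Wang-Zhou20} controls the fineness in terms of $\M(\partial\,\cdot\,\lc\iM)$, not $\M(\partial^\theta\,\cdot\,)$. The paper must therefore verify separately that discretization preserves the $\A$-functional and the capillary boundary varifold up to vanishing error; this is done in \eqref{eq:first discretization inequality} and \eqref{eq:second discretization inequality} using Corollary~\ref{C:various closeness} and the decomposition $\partial\Om\lc\pM = \frac{1}{1-\cos\theta}(\partial\Om - \partial^\theta\Om)$ to recover mass control on the boundary piece. Without these estimates one cannot conclude $\bL(S)=\bL(\{\Phi_i\})$ and $\bC(S)=\bC(\{\Phi_i\})$, which is what lets the discrete contradiction transfer back to the continuous width.
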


\begin{proof}
We first note that the discretization and interpolation results needed here have been addressed in Theorem 3.3, Theorem 3.4, and Proposition 3.5 in \cite{Sun-Wang-Zhou20}. The proof follows that of Theorem 1.7 in \cite{Zhou19} verbatim. We will only provide a sketch here with an emphasis on some minor differences.

We start with a pulled-tight minimizing sequence $\{\Phi_i\}_{i\in\N}\subset \Pi$ (existence guaranteed by Lemma \ref{L:tightening lemma}); namely, every element of $\bC(\{\Phi_i\})$ has $c$-bounded first variation with respect to $\X_{\tan}(M)$. In the following we will need notions of discrete sweepouts, which can be found in \cite[Section 4]{Zhou-Zhu18}.

To use Almgren-Pitts combinatorial argument to deduce the existence of a capillary almost minimizing varifold in $\bC(\{\Phi_i\})$, we need to discretize the continuous sweepouts to discrete sweepouts. For each $\Phi_i$, we can apply \cite[Theorem 3.3]{Sun-Wang-Zhou20} to find a sequence of discrete maps:
\[ \phi_i^j: I(k_i^j)_0 \to \C(M), \]
with $k_i^j<k_i^{j+1}$ and a sequence of positive numbers $\de_i^j\to 0$ (as $j\to \infty$), such that 
\begin{itemize}
    \item[(i)] the fineness
    \[\f(\phi_i^j) = \sup\{\M\big((\partial\phi_i^j(x)- \partial\phi_i^j(y))\lc \iM\big): \text{$x, y$ are adjacent vertices in $I(k_i^j)_0$}\}  \leq \de_i^j;\]
    \item[(ii)] $\sup\{\F(\phi_i^j(x), \Phi_i(x)): x\in I(k_i^j)_0\}< \de_i^j$;
    \item[(iii)] $\M(\partial\phi_i^j(x)\lc \iM) \leq \M(\partial \Phi_i(x)\lc \iM)+\de_i^j$;
    \item[(iv)] $\sup\{\mF(\phi_i^j(x), \Phi_i^j(x)): x\in I(k_i)_0\} \leq \de_i^j$.
\end{itemize}
By (ii) (iv) and Corollary \ref{C:various closeness} (letting $\mathcal S = \Phi_i(I)$), we can find for each each $i$ a sufficiently large $j(i)$, such that, denoting $\varphi_i = \phi_i^{j(i)}$,  
\begin{equation}
\label{eq:first discretization inequality}
  \sup\{ \F(\partial^\theta\varphi_i(x), \partial^\theta\Phi_i(x)) + \mF(|\partial^\theta\varphi_i(x)|, |\partial^\theta\Phi_i(x)|): x\in \text{dmn}(\varphi_i) \} \leq \eta_i,  
\end{equation}
where $\eta_i\to 0$ as $i\to\infty$. 

Note that 
\[\partial\Om\lc \pM  = \frac{1}{1-\cos\theta}\big(\partial\Om - \partial^\theta\Om \big), \] so (ii) and ~\eqref{eq:first discretization inequality} imply that 
\[ 
\begin{aligned}
    \M\big(\partial\varphi_i(x)\lc \pM & - \partial \Phi_i(x)\lc \pM \big) = \F\big( \partial\varphi_i(x)\lc \pM - \partial \Phi_i(x)\lc \pM \big) \\
    & \leq \frac{1}{1-\cos\theta}\Big( \F(\partial\varphi_i(x) - \partial\Phi_i(x)) + \F(\partial^\theta\varphi_i(x) - \partial^\theta\Phi_i(x)) \Big) \to 0
\end{aligned} 
\]
uniformly in $x \in \text{dmn}(\varphi_i)$ as $i\to\infty$ (note that $\F$ in the first line denotes the flat norm in $\pM$, which coincides with the flat norm in $M$ for $n$-dimensional currents supported in $\pM$). This together with (ii) and (iii) implies that
\begin{equation}
\label{eq:second discretization inequality}
    \sup\{ \A(\varphi_i(x)): x\in \text{dmn}(\varphi_i) \} \leq \sup\{\A(\Phi_i(x)): x\in I\} + \eta_i',
\end{equation}
where $\eta_i'\to 0$ as $i\to\infty$.

Denote by $S=\{\varphi_i\}$, and define as usual the associated critical value $\bL(S)$ and critical set $\bC(S)$ as
\[ \bL(S) = \limsup_{i\to\infty} \sup\{\A\big(\varphi_i(y)\big): y\in \text{dmn}(\varphi_i)\}, \]
\[ \bC(S) = \{V=\lim_{j\to\infty}|\partial^\theta\varphi_{i_j}(y_j)| \text{ as varifolds: with } \lim_{j\to\infty}\A(\varphi_{i_j}(y_j)) = \bL(S)\}. \]

\begin{claim}
By ~\eqref{eq:first discretization inequality} and ~\eqref{eq:second discretization inequality}, We have $\bL(S)=\bL(\{\Phi_i\})$ and $\bC(S)=\bC(\{\Phi_i\})$.
\end{claim}

Now we are in the place to apply the Almgren-Pitts combinatorial argument to find a $V\in \bC(S)$ which is capillary almost minimizing in small annuli with respect to the $\M$-norm. 

\begin{claim}
There exists $V\in \V_n(M)$ satisfying: for any $p\in M$ and any small annulus $\mathcal A_{r_1, r_2}(p)$, there exists two sequences of positive numbers $\ep_j\to 0$, $\de_j\to 0$, a subsequence $\{i_j\}\subset \{i\}$, and $y_j\in \text{dmn}(\varphi_{i_j})$ (the domain of $\varphi_{i_j}$), such that
\begin{itemize}
    \item $\lim_{j\to\infty} \A(\varphi_{i_j}(y_j)) = \bL(S)$;
    \item $\varphi_{i_j}(y_j)\in \sA(\An_{r_1, r_2}(p); \ep_j, \de_j; \M)$; and
    \item $\lim_{j\to\infty} |\partial^\theta \varphi_{i_j}(y_j)|=V$.
\end{itemize}
\end{claim}
If Claim 2 were not true, then using the Almgren-Pitts combinatorial argument (see \cite[Theorem 1.16]{Zhou19}), we can find another sequence of discrete maps $\widetilde{S} = \{\widetilde{\varphi}_i\}$ so that $\widetilde{\varphi}_i$ is homotopic to $\varphi_i$ in the discrete sense (see \cite[page 22]{Sun-Wang-Zhou20}), but the critical value decreases strictly $\bL(\widetilde S)< \bL(S)$. One can then use the interpolations results \cite[Theorem 3.4, Proposotion 3.5]{Sun-Wang-Zhou20} to extend $\varphi_i$ and $\widetilde\varphi_i$ to continuous maps in $\mF$-topology
\[ \overline{\Phi}_i: \widetilde{\Phi}_i: I\to \big(\C(M), \mF\big). \]
Moreover, the maps are both homotopic to $\Phi_i$ in the $\mF$-topology, and
\[ \limsup_{i\to\infty}\sup\{\A(\widetilde\Phi_i(x)): x\in I\}\leq \bL(\widetilde{S})< \bL(S) =\bL(\{\Phi_i\}). \]
This is a contradiction to the definition of critical value. Hence we finish checking Claim 2. 

By Proposition \ref{P:equivalence of a.m.}, $V$ is capillary almost minimizing in any smaller annuli inside $\An_{r_1, r_2}(p)$. The proof is completed.  
\end{proof}

\section{Regularity of almost minimizing capillary varifolds}
\label{S:regularity}

In this section we complete the proof of Theorem \ref{T:main1} by proving regularity for the varifold obtained by Theorem \ref{T:existence of almost minimizing capillary varifolds}. Throughout this section $(M^3, g)$ will be a compact manifold with smooth boundary $\partial M$ and metric $g$ satisfying (\hyperref[def:generic]{$\star$}), $c\in\R$, and $\theta\in (0, \frac{\pi}{2}]$.

\subsection{Characterization of tangent varifolds and rectifiability}

We start this section by describing the possible types of tangent varifolds of a capillary almost minimizing varifold which has $c$-bounded first variation with respect to $\X_{tan}(M)$. We will identify the tangent space $T_pM$ of $M$ at a boundary point $p\in\pM$ with the upper half space $\mathbb R^3_+=\{(x_1, x_2, x_3)\in\mathbb R^3: x_3\geq 0\}$. Hence $T_p(\pM)$ is identified with 
\[B=\{x_3=0\}.\] Given a half space $H\subset \mathbb R^3_+$ so that $H\cap \partial \mathbb R^3_+=L$ is a line passing through the origin, we denote by $B_-$ the half of $\partial\mathbb R^3_+$ cut by $L$ which makes angle $\pi-\theta$ with $H$. 


\begin{theorem}[Characterization of tangent varifolds and rectifiability]
\label{thm:tangent-varifolds}
Let $V$ be the varifold obtained by Theorem \ref{T:existence of almost minimizing capillary varifolds}. Fix $c\in\mathbb{R}$ and $\theta < \frac{\pi}{2}$. Then the tangent varifolds of $V$ at $p\in\spt\|V\|\cap \pM$ 
is one of the following 4 cases: 

\begin{enumerate}
    \item\label{type1} $[H]+\cos\theta[ B_-]$ for some half plane $H$ and the associated half plane $B_-\subset B$;
    \item\label{type2} $[H+H']$, where $H,H'$ are two half planes that intersect along a common line $L\subset B$;
    \item\label{type3} $k[B]$ for some $k\in\mathbb N$;
    \item\label{type4} $(k+\cos\theta)[B]$ for some $k\in \mathbb N$.
\end{enumerate}

Furthermore, the type of tangent varifold is unique, that is: if one tangent varifold is of a particular type (1-4), then all are of the same type.
\end{theorem}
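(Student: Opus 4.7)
The strategy combines the monotonicity formula for capillary varifolds, the almost-minimizing replacement procedure of Section \ref{S:AM}, and a direct classification of stationary capillary $2$-cones in $\mathbb{R}^3_+$. I identify $T_pM$ with $\mathbb{R}^3_+$ via Fermi coordinates and write $B = \{x_3=0\}$. First, I would produce tangent cones with capillary boundary structure. Since $V$ has $c$-bounded first variation with respect to $\X_{tan}(M)$ by Theorem \ref{T:existence of almost minimizing capillary varifolds}, the capillary monotonicity formula \eqref{eq.monotonicity} gives that $\Theta^2(\|V\|,p)$ exists and every blowup sequence $\eta_{p,r_j\#}V$ subsequentially converges to a cone $C\subset \mathbb{R}^3_+$ stationary for the capillary functional at contact angle $\theta$ (the $c$-term is lower order under rescaling). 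Moreover, since $V$ is a varifold limit of capillary boundary currents $|\pr^\theta\Omega_i|$ coming from the pull-tight sweepout, $C$ inherits the structure $C = |\pr^\theta \tilde\Omega| + W$ for a Caccioppoli cone $\tilde\Omega \subset \mathbb{R}^3_+$ and a stationary integer-rectifiable cone $W$ accounting for additional interior multiplicity developed in the limit.

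Next, I would classify $C$ using the replacement machinery. For each small $r > 0$ I apply Proposition \ref{P:good-replacement-property} to construct a capillary replacement $V_r^*$ of $V$ in the annulus $\wt{\An}_{r,r_0}(p)$; by Proposition \ref{L:regularity of capillary replacement} this $V_r^*$ is a stable regular capillary varifold in the relative interior and agrees with $V$ on $\wt B^+_r(p)$, so the set of tangent cones of $V$ at $p$ coincides with that of $V_r^*$. Rescaling by $\eta_{p,r}$, the curvature estimate Theorem \ref{theorem.curvature.estimate} and the compactness Theorem \ref{theorem.compactness.almost.properly.embedded.capillary.surfaces} show that as $r\to 0$ the annular piece limits to a stable regular capillary cone on $\mathbb{R}^3_+\setminus\{0\}$; since $C$ is itself a cone this lifts to global structure on $\mathbb{R}^3_+$. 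This gives a dichotomy. If $\spt C\subset B$, the constancy theorem forces $C = d[B]$, with the density $d$ determined by the capillary boundary structure of $\tilde\Omega$: an integer contribution from interior sheets collapsed onto $B$, plus a $\{0,\cos\theta\}$-contribution from $\pr\tilde\Omega\lc \pr\mathbb{R}^3_+$ (whose multiplicity is forced to lie in $\{0,1\}$ because the indicator $\chi_{\Omega_i}$ has mass $\leq 1$, so mass convergence on $\pr M$ rules out higher multiplicity on the boundary stratum). This gives types (\ref{type3}) and (\ref{type4}). Otherwise $\spt C \cap \mathring{\mathbb{R}^3_+}$ is a union of half-planes from the origin, each meeting $B$ along a line in $B$ at angle $\theta$; for $\theta\ne \frac{\pi}{2}$, consistency of the outward unit normal of $\tilde\Omega$ with the prescribed capillary angle on each half-plane (which would flip sign in any alternating multi-sector Caccioppoli configuration) rules out three or more half-planes, and Lemma \ref{lemma.strong.max.principle.embedded} rules out further coincidences of sheets. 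This leaves type (\ref{type1}) (a single half-plane plus the forced $\cos\theta[B_-]$ from the wetting side of $\tilde\Omega$) or type (\ref{type2}) (two half-planes bounding the wedge $\tilde\Omega$ above the common line $L$, so $\tilde\Omega$ does not touch $B$ away from $L$ and there is no $B$-contribution).

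Finally, uniqueness of type follows from the connectedness of $T_pV$ in $\mathcal V_2(\mathbb{R}^3_+)$ as the set of cluster points of the continuous one-parameter family $r \mapsto \eta_{p,r\#}V$. Each of the four types is a closed subset distinguished by two discrete invariants: whether $\spt C \cap \mathring{\mathbb{R}^3_+}$ is empty, and the density at origin, which equals $\frac{1+\cos\theta}{2},\, 1,\, k,\, k+\cos\theta$ respectively. For $\theta\in(0,\frac{\pi}{2})$ these values are pairwise distinct except for the coincidence of type (\ref{type2}) with type (\ref{type3}) at $k=1$, which is separated by the interior-support dichotomy. Combined with the density equality $\Theta^2(\|C\|,0) = \Theta^2(\|V\|,p)$ forced by monotonicity, connectedness forces every tangent cone at $p$ to be of the same type. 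The main obstacle is the interior classification step: without a boundary Allard-type theorem for general capillary angle, one cannot extract smoothness of $C$ near origin from stationarity alone, so the detour through the replacement machinery (using Theorems \ref{theorem.regularity.of.minimizers}, \ref{theorem.curvature.estimate} and \ref{theorem.compactness.almost.properly.embedded.capillary.surfaces} to produce smooth stable capillary annuli, to which Lemma \ref{lemma.strong.max.principle.embedded} applies) is essential; this is also where the capillary almost-minimizing property of $V$, rather than merely stationarity of the underlying support, is used.
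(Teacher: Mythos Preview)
Your overall strategy matches the paper's, but there is a genuine gap in the replacement step. You take a replacement $V_r^*$ on $\wt{\An}_{r,r_0}(p)$ with $r_0$ fixed and $r\to 0$, and note that $V_r^*=V$ on $\wt{B}^+_r(p)$ so they share tangent cones at $p$. That is correct, but it makes the replacement useless: the tangent cones at $p$ only see $V_r^*$ on arbitrarily small balls around $p$, where $V_r^*$ is just $V$. The regularity of $V_r^*$ lives on $\wt{\An}_{r,r_0}(p)$, which after rescaling by $\eta_{p,r}$ becomes $\An_{1,r_0/r}$---exterior to the unit ball, not a punctured neighborhood of the origin. So the stable-capillary compactness never touches the tangent cone. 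The paper instead replaces on annuli $\An_{\alpha r_i,r_i}(p)$ with $\alpha\in(0,\tfrac14)$ fixed and \emph{both} radii shrinking along the blowup sequence. After rescaling by $r_i$ the replacement is regular on the fixed annulus $\An_{\alpha,1}$, and its varifold limit $C^*$ exists. The essential step you are missing is then to prove $C^*=C$: one uses that $C$ and $C^*$ agree outside $\An_{\alpha,1}$ (hence have equal density at $0$ and at infinity), that both are stationary and rectifiable after reflecting across the now-flat barrier $T_p(\pM)$, and that Proposition \ref{P:good-replacement-property}(ii) forces equal mass in every large ball; monotonicity then makes $C^*$ a cone, so $C=C^*$. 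Only after this identification does the regularity of $C^*\lc\An_{\alpha,1}$ transfer to $C$ and allow the half-plane classification.

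Two supporting gaps make the above step impossible as written. First, you never establish a uniform density lower bound on $\spt\|V\|\cap\pM$; the paper devotes Step 1 to this (via a separate annular replacement and the maximum principle), and it is needed to conclude rectifiability of the blowup. Second, you do not prove $C$ is rectifiable---as the remark following the theorem emphasizes, there is no reflection available for $V$ itself, only for its blowup across the \emph{flat} limit barrier, and this is what the paper uses to invoke Allard rectifiability. Finally, your asserted structure $C=|\pr^\theta\tilde\Omega|+W$ does not follow from $V$ being a varifold limit of $|\pr^\theta\Omega_i|$: varifold limits need not preserve such a decomposition, and the paper does not rely on it.
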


\begin{remark}
As the main challenge to obtain characterization of tangent cones for $V$, 
there is no natural way to extend $V$ to a varifold with bounded first variation, and one can not directly invoke Allard Rectifiability Theorem \cite{Allard72}. Nevertheless, we note that blowups of $V$ can be simply extended by reflection (across $T_p\pM$ which is a plane) as a stationary varifold. Similar observation first appeared in \cite{Li-Zhou16}.
\end{remark}

\begin{proof}
By the classical min-max theory for minimal hypersurfaces \cite{Almgren65, Pitts, Schoen-Simon81} and the CMC min-max theory \cite{Zhou-Zhu17}, we know that $V\lc G_2(\iM)$ is induced by a smooth almost embedded hypersurface $\Si$ of constant mean curvature $c$ (with multiplicity one when $c\neq 0$). 

 Now we divide the proof into the following three steps. 

\vspace{0.5em}
\noindent{\bf Step 1}: $V$ has uniform density lower bound at $p\in \spt\|V\|\cap \partial M$. 
\vspace{0.5em}

Fix $p\in \spt\|V\|\cap \pM$. Choose $0<r_0<r_{am}(p)/4$ small enough such that for any $r<r_0$ the mean curvature $H$ of $\partial B_r(p)\cap \widetilde{M}$ is greater than $c$. Let $V^*$ be a capillary replacement of $V$ in $\Clos(A_{r, 2r}(p))\cap M$ produced by Proposition \ref{P:good-replacement-property}.

By a standard argument using the Maximum Principle \ref{thm:vfld-mp}, 
 for any $r<s<2r$, we have 
\begin{equation}
\label{E:first replacement nontrivial}
 \spt \|V^*\|\cap (\partial B_s(p)\cap M)\neq \emptyset.   
\end{equation}
By Proposition \ref{L:regularity of capillary replacement}, inside $\An_{r, 2r}(p)\cap M$ the replacement $V^*$ is induced by $\partial^\theta\Omega + \sum_{i} m_i \Sigma_{(i)}$, where $\Si=\spt (\|\pr^\theta\Omega\| \lc \mathring{M})$ is a smooth, almost embedded, capillary constant mean curvature surface of mean curvature $c$, and the $\Sigma_{(i)}$ are disjoint, connected, smooth embedded, minimal surfaces (clearly no $\Sigma_{(i)}$ arise if $c\neq 0$). 

We remark that $\Si$ may be empty (when $\partial^\theta\Omega$ is supported entirely on $\pM$) or a surface with no boundary. 

By (\ref{E:first replacement nontrivial}), there exists some $q\in \spt\|V^*\|\cap (\partial B_{3r/2}(p)\cap M)$. We may choose $q$ to have a uniform density lower bound; we do so differently according to the following cases:

\begin{itemize}
    \item[Case 1:] $\Si$ is empty. Then $\partial^\theta\Omega$, which is nontrivial, must be identical to $\cos\theta (\pM \cap \An_{r, 2r}(p))$. In this case, we pick $q\in \partial B_{3r/2}(p)\cap \pM$. Then $\Theta^2(\|V^*\|, q)\geq \cos\theta$.
    \item[Case 2:] At least one $\Si_{(i)}\neq \emptyset$, or $\Sigma\neq \emptyset$ and at least one connected component $\Sigma_0$ of $\Si$ satisfies $\partial\Si_0\cap \mathcal \An_{r, 2r}(p)=\emptyset$. That is, one of the $ \Sigma_{(i)}$ or $\Sigma_0$ is a smooth nonempty surface with no boundary in $\An_{r, 2r}(p)\cap M$. In this case, we pick $q\in \Si_0\cap \partial B_{3r/2}(p)$ or $q\in \Si_{(i)}\cap \partial B_{3r/2}(p)$. Then $\Theta^2(\|V^*\|, q)\geq 1$.
    \item[Case 3:] $\Si\neq \emptyset$, and all components of $\Si$ have nontrivial boundary on $\An_{r, 2r}(p)\cap \pM$. In this case, we pick $q\in \partial B_{3r/2}(p)\cap \Si$. Then $\Theta^2(\|V^*\|, q)\geq \frac{1}{2}(1+\cos\theta)$. 
\end{itemize}

With the choice of $q$, we know by the monotonicity formulae and Proposition \ref{P:good-replacement-property} (i) and (ii) that ($C_i$, $i=1, 2, \cdots$ denote uniform constants depending only on $\widetilde{M}$)
\[
\begin{aligned}
\frac{\|V\|(B_{4r}(p))}{\pi(4r)^2} 
& \geq \frac{\|V^*\|(B_{4r}(p))-c\cdot \vol(\An_{r, 2r}(p)\cap M)}{\pi(4r)^2} \geq \frac{\|V^*\|(B_{r/2}(q))-c\cdot C_1 r^3}{\pi(4r)^2}\\
& \geq \frac{1}{64 C_2} \lim_{s\to 0}\frac{\|V^*\|(B_s(q))}{\pi s^2}- c\cdot C_3r = \frac{1}{64 C_2} \Theta^2(\|V^*\|, q)- c\cdot C_3r\\
& \geq \frac{1}{128 C_2} -c\cdot C_3 r.
\end{aligned}
\]
As a crucial ingredient, in the second inequality we have used the fact that $\vol(\An_{r, 2r}\cap M)\leq C_1 r^3$. Note that we have used the interior Monotonicity Formula \cite{Simon83} in Case 2, and the boundary version \cite[Theorem 2.3]{Li-Zhou16} in Cases 1 and 3. 

Since the above inequality holds true for all $r>0$ small enough, letting $r\to 0$, we derive the uniform lower density bound $\Theta^2(\|V\|, p)\geq \frac{1}{128 C_2}$.


\vspace{0.5em}
\noindent{\bf Step 2}: Given a tangent varifold $C\in \VarTan(V, p)$, where $p\in \spt\|V\|\cap \partial M$, $C$ is a stationary $2$-rectifiable cone in $T_pM$ with free boundary. 
\vspace{0.5em}

Here by \cite[Definition 2.1]{Li-Zhou16}, a varifold is stationary with free boundary in $M$ if and only if it has $0$-bounded first variation with respect to $\X_{tan}(M)$. Let $V_i=(\bleta_{p, r_i})_{\#}V$ for some $r_i\to 0$ such that 
\[C=\lim V_i\in \V_2(T_pM) \quad \text{as varifolds}.\] 
Here $\bleta_{p, r}: \R^L\to \R^L$ is the rescaling map $\bleta_{p, r}(x)=\frac{x-p}{r}$. We know that $\bleta_{p, r_i}(M)$ converges smoothly to the half space $T_pM$. Hence the blowup limit $C$ is stationary with respect to $\X_{tan}(T_pM)$, and is stationary with free boundary in $T_pM$. By the varifold convergence and the uniform area ratio lower bound of $V$ as in Step 1, we have $\Theta^2(\|C\|, q)\geq \theta_0>0$ for all $q\in\spt\|C\|$.

Consider the doubled varifold $\overline{C}$ (obtained by taking the union of $C$ and its reflection to $T_p\widetilde{M}\setminus T_pM$ across $T_p(\pM)$). Then $\overline{C}$ is stationary in $T_p\widetilde{M}$ by \cite[Lemma 2.2]{Li-Zhou16}. Note that $\overline{C}$ has uniform density lower bound $\Theta^2(\|C\|, q)\geq \min\{1, 2\theta_0\}>0$ for every $q\in \spt\|\overline{C}\|$, so $\overline{C}$ is rectifiable by the Recifiability Theorem \cite[42.4]{Simon83}. Therefore $\overline{C}$ is a rectifiable cone by \cite[Theorem 19.3]{Simon83}, and so is $C$.

\vspace{0.5em}
\noindent{\bf Step 3}: Now we use the smooth convergence for stable capillary surfaces to obtain characterization of tangent varifolds. 
\vspace{0.5em}

Pick $\al\in (0, \frac{1}{4})$. For each $i$ large enough, we can apply Proposition \ref{P:good-replacement-property} with the compact set $K_i=\Clos(\An_{\al r_i, r_i}(p)\cap M)$ to obtain a replacement $V^*_i\in \V_2(M)$ of $V$ in $K_i$. As in (\ref{E:first replacement nontrivial}), we know that $\|V^*_i\|\lc \An_{\al r_i, r_i}(p) \neq 0$.
By Proposition \ref{P:good-replacement-property} (ii), there exists some $C_1>0$ depending only on $M$ so that for any $R>r_i$,
\begin{equation}
\label{E:mass comparison between V and its replacement}
    -c\cdot C_1 r_i^3 \leq \|V\|(B_R(p))-\|V^*_i\|(B_R(p))\leq c\cdot C_1 r_i^3.
\end{equation}
By weak compactness of varifolds with bounded total mass, after passing to a subsequence, we obtain varifold limit:
\[ C^*:=\lim_{i\to\infty} (\bleta_{p, r_i})_{\#}V_i^* \in \V_2(T_pM). \]

Now $C^*$ satisfies the following properties.
\begin{enumerate}

    \item By Proposition \ref{P:good-replacement-property} (i), $C$ is identical to $C^*$ outside $\Clos(\An_{\al, 1}(0))$.
    \item By Proposition \ref{P:good-replacement-property} (iii) and (v), $V^*_i$ is also capillary almost minimizing in small annuli and has $c$-bounded first variation with respect to $\X_{tan}(M)$. So by similar arguments as in Step 1 and 2, we know that $V^*_i$ has uniform lower density bound, and hence $C^*$ is rectifiable and stationary with free boundary in $T_pM$.
    \item As a crucial corollary of (\ref{E:mass comparison between V and its replacement}), for any $R> 1$, we have
    \[ -c\cdot C_1 r_i \leq \|(\bleta_{p, r_i})_{\#}V\|(B_R(p)) - \|(\bleta_{p, r_i})_{\#}V_i^*\|(B_R(p)) \leq c\cdot C_1 r_i. \]
    Therefore, letting $i\to\infty$, we have
    \[ \|C^*\|(B_R(0)) = \|C\|(B_R(0)),\quad \text{for any } R>1. \]

    \item Lastly, by Proposition \ref{L:regularity of capillary replacement} and Theorem \ref{theorem.compactness.almost.properly.embedded.capillary.surfaces}, we know that $C^*\lc (\An_{\al, 1}(0)\cap \iM)$ is an integer multiple of an embedded, stable, capillary minimal surface, and if a connected component $\Si$ of $C^*\lc (\An_{\al, 1}(0)\cap \iM)$ has non-empty boundary on $\An_{\al, 1}(0)\cap \pM$, then the multiplicity of $\Si$ is one.
\end{enumerate}

\vspace{0.5em}
\textit{Claim 1: $C=C^*$}.
\vspace{0.5em}

\textit{Proof of Claim 1}: We consider the doubled varifolds $\overline{C}, \overline {C^*} \in \V_2(T_p\widetilde{M})$ of $C, C^*\in \V_2(T_pM)$ across $T_p(\pM)$ as in Step 2. We will to show that $\overline{C}$ and $\overline{C^*}$ coincide. This follows from a standard argument as in \cite[Theorem 7.8]{Pitts}. Indeed, Since $\overline{C}$ is a rectifiable cone, we have 
\[ \frac{\|\overline{C}\|(B_r(0))}{\pi r^2} \equiv \text{constant}; \]
by (1) and (3), $\overline{C}$ and $\overline{C^*}$ have the same density at $0$ and the same area ratio near infinity. Using that $\overline{C^*}$ is stationary and rectifiable, the classical monotonicity formula implies that
\[ \frac{\|\overline{C^*}\|(B_r(0))}{\pi r^2} \equiv \text{constant}; \]
thus $\overline{C^*}$ is also a cone by \cite[Theorem 19.3]{Simon83}; by (1) again, we have $\overline{C}=\overline{C^*}$ and hence $C=C^*$.\qed

\vspace{1em}
Next, we prove the characterization of $C$. 

\vspace{0.5em}
\textit{Claim 2: $C$ is one of the following,}
\begin{enumerate}[label=$(\roman*)$]
    \item $[H+\cos\theta B_-]$, where $H$ intersects with $T_p(\pM)$ with angle $\theta$ along a line $L\subset T_p(\pM)$, and $B_-$ is half of $T_p(\pM)$ cut by $L$ making angle $\pi-\theta$ with $H$;
    \item $[H+H']$ where $H,H'$ are two half planes which intersect along a line $L\subset T_p(\pM)$ and each make angle $\theta$ with $T_p(\pM)$;
    \item $(k+\cos\theta)[T_p(\pM)]$ or $k'[T_p(\pM)]$ for some $k, k'\in\N$.
\end{enumerate}
    
\textit{Proof of Claim 2}: Since $\overline{C}$ is a rectifiable stationary cone, the restriction to the unit sphere $\overline{C}\lc S_1(0)$ is a rectifiable stationary varifold. Also as the densities of $\overline{C}$ have a uniform lower bound by Step 2, so does $\overline{C}\lc S_1(0)$. Hence by \cite{Allard-Almgren76}, $\overline{C}\lc S_1(0)$ consists of geodesic networks. Since $C=C^*$ and by (4), we know that $C\lc \interior(T_pM)$ is smoothly embedded, so $C$ consists of finitely many half planes intersecting along a line which passes $0$ and lies in $T_p(\pM)$. 

We first assume that $C\lc \interior(T_pM)\neq \emptyset$. By Theorem \ref{theorem.compactness.almost.properly.embedded.capillary.surfaces}, $C\lc \interior(T_pM)$ consists of one half plane $H$ intersecting with $T_p(\pM)$ along a line $L$ with angle $\theta$, or two half planes $H_1$ and $H_2$ which each intersect $T_p(\pM)$ along a common line $L$ with angle $\theta$. In the first case, by Theorem \ref{theorem.compactness.almost.properly.embedded.capillary.surfaces} again, $C=[H]+\cos\theta [B_-]$, where $B_-$ is half of $T_p(\pM)$ cut by $L$ making an angle $\pi-\theta$ with $H$. In the second case, by by Theorem \ref{theorem.compactness.almost.properly.embedded.capillary.surfaces}, $C=[H]+[H']$.

Now we assume that $C\lc \interior(T_pM)= \emptyset$. In this situation, $C$ can only be supported on $T_p(\pM)$, so it is a constant multiple of $T_p(\pM)$ by the Constancy Theorem \cite[\S 41]{Simon83} (applied to $\overline{C}$). By Theorem \ref{theorem.compactness.almost.properly.embedded.capillary.surfaces} and (4), $C$ is then the smooth limit of almost properly embedded stable CMC surfaces together with possible a $\cos\theta$ multiple of $\bleta_{p, r_i}(\pM)$, so $C$ is either $(k+\cos\theta)[T_p(\pM)]$ or $k'[T_p(\pM)]$ for some $k, k'\in\N$.\\

It remains to show that the type of tangent varifold is unique. The density immediately distinguishes types (1) and (4) as unique: Note that in type (1), the density is $\frac{1}{2}(1+\cos\theta) <1$, whilst types (2-3) have integer density and type (4) has non-integer density $k+\cos\theta >1$. Thus we only need to show that one cannot have one tangent varifold of type (2) and another of type (3; $k=1$). 

Let $\mathcal{C}$ be the set of all planes in $T_pM$ intersecting $B=T_p\partial M$ at angle $\theta$. Note that all elements of $\mathcal{C}$ are rotations of each other around $B$. Then $\mathcal{C}$ is clearly separated from $B$ in $G_2(\mathbb{R}^L)$; in particular there is a neighbourhood $\mathcal{U}$ of $B$ in $G_2(\mathbb{R}^L)$, disjoint from $\mathcal{C}$, and a smooth function $\beta$ supported in $\mathcal{U}$ such that $\beta(B)=1$ (and $\beta(\mathcal{C})=0$)

It then follows from the definition of $\mathbf{F}$-metric that $\mathbf{F}([B], \mathcal{C}_+)>0$, where $\mathcal{C}_+$ be the set of all type (2) varifolds, i.e. induced by the union of two capillary half-planes intersecting along a common line. But the set of tangent varifolds must be connected (as it corresponds to the limit points of the connected set $\{(\eta_{p,r})_\#(V)| r>1\}$), so this implies the uniqueness of tangent cone type.

\end{proof}


\subsection{Regularity of capillary min-max varifolds}

In this section, we prove the main regularity Theorem \ref{T:boundary regularity}, and by doing so complete the proof of Theorem \ref{T:main1}. The idea for regularity is to get as far as we can using the interior gluing and regularity which was already established in \cite{Pitts, Schoen-Simon81, Zhou-Zhu17}. However, there is a subtlety at the boundary, which can be overcome using the smooth maximum principle so long as the replacement is $C^1$ up to the gluing interface. For this reason, we use a double replacement method. 

In this section, a \textit{regular capillary surface} will refer to a smooth, almost properly embedded surface $\Sigma$ of constant mean curvature $c$ and (smooth) boundary $\pr \Sigma$, which meets $\pr M$ with angle $\theta$. The smooth maximum principle we need is as follows:

\begin{lemma}
\label{lem:gluing-smooth-mp}
Let $c\in \mathbb{R}$ and $\theta\in(0,\frac{\pi}{2})$. Consider a connected, regular capillary surface $\Sigma$ in the annular region $\wt{\An}_{s_1,s_2}(p)$ which is $C^1$ up to the outer interface $\wt{S}^+_{s_2}(p)$. 

There exists $s_*=s_*(n,c,\theta)$ and $s_0 = s_0(n,c,\theta,s_2)$ such that if $s_1<s_0<s_2<s_*$, and $\Sigma$ is as above, then either:
\begin{enumerate}
    \item $\Clos(\Sigma) \cap \wt{S}^+_{s_2}(p) \cap \mathring{M} \neq \emptyset$; or
    \item $\Clos(\Sigma)$ is tangent to $\pr M$ at every point of $\Clos(\Sigma) \cap \wt{S}^+_{s_2}(p) \subset \pr M$.
\end{enumerate}
\end{lemma}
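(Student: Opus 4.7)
The plan is to argue by contradiction using a sliding-barrier argument with a local foliation by capillary CMC surfaces, very much in the spirit of the foliation construction in Step 2 of the proof of Theorem \ref{theorem:removal.singularity}. First, I would fix $s_*=s_*(n,c,\theta)$ so that within Fermi coordinates, for every $s_2\in (0,s_*)$ the outer Fermi half-sphere $\wt{S}^+_{s_2}(p)$ has mean curvature (with respect to its inward normal into $\wt{B}^+_{s_2}(p)$) strictly greater than $|c|$ and meets $\pr M$ orthogonally in a smoothly embedded submanifold; the constant $s_0=s_0(n,c,\theta,s_2)$ will emerge from the foliation construction below. Suppose case (1) fails, so $\Clos(\Sigma)\cap \wt{S}^+_{s_2}(p)\subset \pr M$, and suppose for contradiction that some $q\in \Clos(\Sigma)\cap \wt{S}^+_{s_2}(p)\cap \pr M$ satisfies $T_q\Sigma\neq T_q\pr M$. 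By the capillary condition and the $C^1$-regularity of $\Sigma$ up to the outer interface, $T_q\Sigma$ must meet $T_q\pr M$ at angle $\theta$, and $q$ lies in the closure of $\pr\Sigma$.

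Next I would construct, in Fermi coordinates centered at $q$, a smooth one-parameter foliation $\{\Sigma_\rho\}_{\rho\in (-\ep,\ep)}$ of a neighborhood $W$ of $q$ by capillary CMC surfaces of mean curvature $c$ and contact angle $\theta$. The construction proceeds by parametrising graphs over a capillary half-plane with the desired tangent data at $q$, and applying the implicit function theorem to the capillary CMC equation, a quasilinear elliptic PDE with an oblique Neumann-type boundary condition at $\pr M$ (as in the argument near $(0,0,0)$ in the proof of Theorem \ref{theorem:removal.singularity}). I would parameterise so that $\Sigma_0$ passes through $q$ with tangent plane and outward normal matching those of $\Sigma$, and arrange $W$ to extend across $\wt{S}^+_{s_2}(p)\cap W$ so that each leaf $\Sigma_\rho$ protrudes outside $\Clos(\wt{B}^+_{s_2}(p))$. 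The constant $s_0$ is then chosen small enough (depending on the size of $W$) that $\Sigma\cap W\subset \wt{\An}_{s_0,s_2}(p)$ stays within the foliated region.

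With $\rho$ oriented so that increasing $\rho$ corresponds to leaves moving outward across $\wt{S}^+_{s_2}(p)$, define the first-touching parameter $\rho^*=\inf\{\rho\geq -\ep:\Sigma_\rho\cap \Clos(\Sigma\cap W)\neq \emptyset\}$. Since $q\in \Sigma_0\cap\Clos(\Sigma)$ we have $\rho^*\leq 0$. The claim is that $\rho^*=0$: if $\rho^*<0$, then at the first-touching point $q'\neq q$, $\Sigma\cap W$ lies on one side of $\Sigma_{\rho^*}$, and by the strong maximum principle for capillary CMC surfaces (Lemma \ref{lemma.strong.max.principle.embedded}, items (1)--(2)) combined with unique continuation along the connected surface $\Sigma$, $\Sigma\cap W$ would coincide with $\Sigma_{\rho^*}$, contradicting $q\in \Sigma\cap \Sigma_0$ with $\rho^*\neq 0$. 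Thus $\rho^*=0$, so $\Sigma\cap W$ lies on one side of $\Sigma_0$, shares the boundary point $q$, and has matching outward normals; Lemma \ref{lemma.strong.max.principle.embedded}(2) applied at $q$ forces $\Sigma=\Sigma_0$ in a neighborhood of $q$. But $\Sigma_0$ extends across $\wt{S}^+_{s_2}(p)$ into the exterior of $\wt{B}^+_{s_2}(p)$, whereas $\Sigma\subset \Clos(\wt{B}^+_{s_2}(p))$, and this contradiction establishes case (2).

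The main obstacle is the sliding step: verifying that the first-touching parameter $\rho^*$ must equal $0$ rather than occurring at some interior touching. This rests on unique continuation for the capillary CMC equation (essentially the strong maximum principle applied to the difference PDE on a component of $\Sigma\cap W$) together with the connectedness of $\Sigma$ hypothesised in the lemma. A secondary technical point is calibrating $s_0$ and $W$ so that $\Sigma\cap W$ indeed sits inside the foliated region; this is where the dependence $s_0=s_0(n,c,\theta,s_2)$ enters.
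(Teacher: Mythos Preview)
Your approach is genuinely different from the paper's, and it contains a gap in the sliding step that I do not see how to repair.

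\textbf{What the paper does.} The paper constructs a \emph{global} foliation $\{\wt{S}_{s_2,\gamma}(p)\}_{\gamma\in[\theta,\pi/2]}$ of a region in $\wt{B}^+_{s_2}(p)$ by perturbed spherical caps, all sharing the common boundary circle $\wt{S}^+_{s_2}(p)\cap\partial M$, with contact angle at most $\gamma$ and mean curvature strictly greater than $c$ (since $s_2<s_*$). These are \emph{strict barriers}. The constant $s_0$ is chosen so that the innermost leaf $\wt{S}_{s_2,\theta}(p)$ stays outside $\wt{B}^+_{s_0}(p)$. Because $\Sigma$ lies entirely in $\wt{B}^+_{s_2}(p)$, one can take the largest $\gamma_*$ for which $\Sigma$ meets $\wt{S}_{s_2,\gamma_*}(p)$ and obtain a genuine one-sided tangential contact, to which the (strict) maximum principle applies.

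\textbf{The gap in your argument.} Your foliation $\{\Sigma_\rho\}$ lives only in a small neighbourhood $W$ of $q$, and the leaves have the \emph{same} mean curvature $c$ and contact angle $\theta$ as $\Sigma$. Since the $\Sigma_\rho$ foliate $W$, every point of $\Sigma\cap W$ sits on exactly one leaf, and the set $\{\rho:\Sigma_\rho\cap\Clos(\Sigma\cap W)\neq\emptyset\}$ is simply the image of $\Clos(\Sigma\cap W)$ under the leaf-projection. There is no reason the infimum $\rho^*$ is attained at a tangential touching: $\Sigma$ enters and exits $W$ through the lateral boundary $\partial W\setminus(\wt{S}^+_{s_2}(p)\cup\partial M)$, and the extremal $\rho$-value will typically be realised at such a lateral boundary point, where $\Sigma$ may cross $\Sigma_{\rho^*}$ transversally and no maximum principle applies. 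Your calibration of $s_0$ controls how far $\Sigma\cap W$ extends \emph{radially} toward $p$, but does nothing to prevent $\Sigma$ from exiting $W$ sideways. Without a one-sided touching, you cannot invoke Lemma~\ref{lemma.strong.max.principle.embedded} to conclude $\Sigma=\Sigma_{\rho^*}$ locally.

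A secondary issue: even granting the sliding step, to apply the strong maximum principle at $q$ itself you would need $\Sigma$ to lie on one side of $\Sigma_0$ near $q$. But $\Sigma$ is only $C^1$ up to $\wt{S}^+_{s_2}(p)$ (there is no equation imposed on that interface), so $\Sigma$ could oscillate about $\Sigma_0$ while remaining tangent to first order at $q$. The constraint $\Sigma\subset\wt{B}^+_{s_2}(p)$ forces $\Sigma$ to one side of $\wt{S}^+_{s_2}(p)$, not of $\Sigma_0$ --- and these are transverse at $q$. The paper avoids this difficulty because its barriers are strict and share the boundary circle with $\wt{S}^+_{s_2}(p)$, so the containment $\Sigma\subset\wt{B}^+_{s_2}(p)$ is exactly what is needed.
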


That is, either $\Sigma$ meets the outer interface at an interior point, or it only meets the outer interface along false boundary points. Note that $\Clos(\Sigma) \cap \wt{S}^+_{s_2}(p)$ is nonempty for small enough $s_*$ by Theorem \ref{thm:vfld-mp}. The proof of Lemma \ref{lem:gluing-smooth-mp} is deferred to Appendix \ref{sec:gluing-smooth-mp}. With the lemma in hand, we can establish the main regularity result:

\begin{theorem}\label{T:boundary regularity}
Let $V$ be as in Theorem \ref{T:existence of almost minimizing capillary varifolds}. Then $V$ is a regular capillary varifold in $M$. That is, $V\lc \iM$ is induced by a nontrivial, smooth, closed, almost properly embedded surface $\Si\subset M$ with (possibly empty) smooth boundary $\partial \Si\subset \pM$, and $\Sigma$ has prescribed mean curvature $c$ and meets $\pM$ with contact angle $\theta$. Moreover, when $c\neq 0$ and $\theta\neq \frac{\pi}{2}$, $V$ has multiplicity one.
\end{theorem}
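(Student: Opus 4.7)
The interior regularity is already covered by the CMC min--max theory of \cite{Zhou-Zhu17}: since $V$ is capillary almost minimising in small annuli and has $c$-bounded first variation with respect to $\mathfrak{X}_{tan}(M)$, the portion $V\lc G_2(\mathring M)$ is induced by a smooth, almost properly embedded surface of constant mean curvature $c$, with multiplicity one when $c\neq 0$. The essential task is therefore to analyse the behaviour of $V$ at a point $p\in \spt\|V\|\cap \partial M$, and to show that the interior surface extends smoothly across $\partial M$ as a regular capillary surface. The starting point is the tangent cone classification Theorem \ref{thm:tangent-varifolds}: every tangent varifold at $p$ falls into one of four types (a half-plane plus a $\cos\theta$ portion of $\partial\mathbb{R}^3_+$, two half-planes, an integer multiple of $\partial\mathbb{R}^3_+$, or an $\mathbb{N}+\cos\theta$ multiple). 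In each case the tangent is multiplicity one on the interior sheets.

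I would then carry out a double-replacement procedure adapted to the capillary setting, following the Pitts--Schoen--Simon template. For small $0<s_1<s_2<r_{am}(p)$ produce a first replacement $V^{*}$ of $V$ on $\Clos(\widetilde{\An}_{s_1,s_2}(p))$ via Proposition \ref{P:good-replacement-property}; by Proposition \ref{L:regularity of capillary replacement}, $V^{*}$ is a stable regular capillary varifold in $\widetilde{\An}_{s_1,s_2}(p)$. Next, choose $s_1<s_3<s_2<s_4<r_{am}(p)$ and produce a second replacement $V^{**}$ of $V^{*}$ on $\Clos(\widetilde{\An}_{s_3,s_4}(p))$. Then $V^{**}$ is regular on $\widetilde{\An}_{s_3,s_4}$ and coincides with $V^{*}$ on $\widetilde{\An}_{s_1,s_3}\cup \widetilde{\An}_{s_2,s_4}$, so on the interior overlaps the CMC unique continuation from \cite{Zhou-Zhu17} forces a $C^{1,\alpha}$ match-up (as in Step~2 of \cite[Thm.~5.3]{Zhou-Zhu17}). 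This promotes $V^{**}$ to a stable regular capillary varifold on the union $\widetilde{B}^{+}_{s_4}(p)\setminus \widetilde{B}^{+}_{s_3}(p)$, with boundary $C^{1}$ up to $\partial M$ near the interface $\widetilde{S}^{+}_{s_2}(p)$.

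The main obstacle, already flagged in the introduction, occurs when $\theta\ne\frac{\pi}{2}$: the replacement might a priori consist only of the boundary piece $\cos\theta\,\partial^{\theta}\Omega\lc \partial M$ near a gluing interface, with no interior sheet reaching it. This is precisely where Lemma \ref{lem:gluing-smooth-mp} is invoked: once the regular surface piece of $V^{**}$ is $C^1$ up to $\widetilde{S}^{+}_{s_2}(p)$, the lemma guarantees that either it meets the interface at an honest interior point---in which case the CMC tangential match extends to a smooth gluing across the interface by standard elliptic regularity for mixed boundary value problems---or it touches $\partial M$ everywhere tangentially on the interface, which is incompatible with the capillary angle $\theta\neq \pi/2$. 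Iterating the double replacement on finer and finer annular towers, combined with the compactness Theorem \ref{theorem.compactness.almost.properly.embedded.capillary.surfaces} and the interior gluing of \cite{Zhou-Zhu17}, yields a stable regular capillary surface $\widetilde{\Sigma}$ on the punctured half-ball $\widetilde{B}^{+}_{r_0}(p)\setminus\{p\}$ that induces $V$ there. The tangent cone classification restricts the tangent at $p$ to be a single multiplicity-one capillary half-plane (the other cases either have multiplicity in the interior, which is ruled out by the double-replacement construction, or split into two transverse sheets, each of which is then handled separately). Theorem \ref{theorem:removal.singularity} then removes the singularity at $p$ and shows that $\widetilde{\Sigma}$ extends as a smooth, properly embedded capillary surface across $p$.

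Finally, to conclude $V\lc\mathring{M}=|\Sigma|$ globally, I would run the standard argument comparing $V$ with the replacements in $\mathbf{F}$-metric and invoking \cite[Prop.~5.8]{Pitts}: the approximating sequences $\Omega_i^{*}\in\mathcal{A}(U;\epsilon_i,\delta_i;\mathbf{F})$ converge to a Caccioppoli set whose capillary boundary equals $V$ on the annular regions, hence everywhere by a covering argument. For the multiplicity-one assertion when $c\neq 0$ and $\theta\neq \frac{\pi}{2}$, note that each connected component of the limiting interior surface has nonempty boundary on $\partial M$ (otherwise the component would be a closed CMC hypersurface touching $\partial M$ only tangentially, contradicting the maximum principle via Theorem \ref{thm:vfld-mp} applied at the innermost sphere); Theorem \ref{theorem.compactness.almost.properly.embedded.capillary.surfaces}(1) then forces the multiplicity of convergence of the stable replacements to $\Sigma$ to equal one, and the argument in the proof of that theorem (comparing numbers of upward/downward unit normals along graphical sheets, which must be equal since $\cos\theta\neq 0$) propagates multiplicity one to $V$ itself.
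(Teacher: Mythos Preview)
Your overall architecture---double replacements, gluing across the interface, extension to the punctured ball, removable singularity at $p$, then identifying $V$ with the replacement---matches the paper's. But there are two genuine gaps where your argument would fail.

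\textbf{Gap 1: the second alternative in Lemma~\ref{lem:gluing-smooth-mp}.} You write that if a replacement sheet ``touches $\partial M$ everywhere tangentially on the interface'', this is ``incompatible with the capillary angle $\theta\neq\pi/2$''. That is false: the \emph{interior} of a capillary surface may well be tangent to $\partial M$ (this is exactly the barrier-touching set $\mathcal{T}(\Sigma)$ in the paper's notation; see Figure~\ref{figure.different.boundary.behaviors}, cases (3)--(4)). The capillary angle condition constrains $\partial\Sigma$, not interior touching. The paper does not rule out case~(2) of the lemma by an angle contradiction; instead it first selects $s_2$ so that $\widetilde{S}^+_{s_2}(p)$ meets $\Sigma',\partial\Sigma'$ transversely and $\mathcal{S}(\Sigma'),\mathcal{T}(\Sigma')$ only in a set of zero $1$-measure (condition~(T)). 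Then in Step~2c, if a sheet $\Sigma''_j$ of the second replacement is tangent to $\partial M$ at an interface point $q$, one compares tangent cones of $V^*$ and $V^{**}$ on the $V^*$ side to force $\Theta^2(\|V^*\|,q)\geq 1$, hence $q\in\Sigma'$ (density $\cos\theta<1$ would occur otherwise), and then the sparse intersection with $\mathcal{T}(\Sigma')$ forces $q\in\Clos(\Gamma)$, where the interior gluing already applies. Your proposal omits both the transversality choice~(T) and this density argument; without them the gluing step does not close.

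\textbf{Gap 2: tangent cones of types (3) and (4) at $p$.} You dismiss these as ``ruled out by the double-replacement construction'', but they are not. They correspond to genuine barrier-touching points of a regular capillary varifold (interior sheets tangent to $\partial M$ at $p$, possibly with an additional $\cos\theta$ piece), and they survive the replacement process. The paper treats this as a separate case (Step~4, Case~3): the tangent cone is a multiple of $[T_p\partial M]$, hence unique by density, so for small $\rho$ the surface $\tilde\Sigma\cap(\widetilde{B}^+_\rho(p)\setminus\{p\})$ decomposes graphically over $T_p\partial M$, and Allard plus elliptic regularity applied sheet-by-sheet gives the extension across $p$. Theorem~\ref{theorem:removal.singularity} is not invoked here---its hypothesis~(2) requires a capillary half-plane tangent cone, which is precisely what types~(3) and~(4) are not.
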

\begin{proof}
Recall that we fix $c\in \mathbb{R}$ and $\theta\in(0,\frac{\pi}{2})$. 

The interior regularity of $V\lc \mathring{M}$ was established by \cite{Pitts, Schoen-Simon81, Zhou-Zhu17}. In particular, $\Sigma= \spt\|V\|\cap\mathring{M}$ is a smooth almost properly embedded surface of constant mean curvature $c$. To prove the boundary regularity, we need to show that $\Sigma$ extends smoothly to $\pr M$ as a regular capillary surface.

Let $p\in \spt\|V\|\cap \pM$. Recall that $\widetilde{B}^+_r(p)$ and $\widetilde{S}^+_r(p) = \partial_{rel} \widetilde{B}^+_r(p)$ denote the Fermi half-balls and half-spheres respectively. Fix $r_0\in (0, r_{am}(p)/4)$ sufficiently small so that Theorem \ref{thm:vfld-mp} applies on $\widetilde{B}^+_r(p)$ for all $0<r\leq r_0$ to show that for any $W \in \mathcal{V}_n(M)$ which is stationary with $c$-bounded first variation in $\widetilde{B}^+_r(p)$, and $\|W\|\lc \widetilde{B}^+_r(p) \neq 0$, we have
\begin{equation}
\label{eq:mp-inside-outside}
0\neq \spt\|W\|\cap \wt{S}^+_r(p) = \Clos(\spt \|W\| \setminus \Clos\big(\widetilde{B}^+_r(p))\big) \cap \wt{S}^+_r(p). 
\end{equation}

The idea will be to use the previously established interior gluing to simplify matters, using unique continuation and noting that the first replacement is regular through the gluing interface. The subtle case to rule out is when the second replacement intersects the gluing interface in a subset of the barrier $\partial M$ which is disjoint from $\Sigma'$. 

\vspace{0.5em}
\textbf{Step 1:} \textit{Replacement with boundary structure. }
\vspace{0.5em}

Let $s_*$ be as given by Lemma \ref{lem:gluing-smooth-mp}.

Take a replacement $V^*$ on $\wt{\An}_{s,t}(p)$, with $s<t\ll \min(r_{am}(p),s_*)$. 
By Proposition \ref{L:regularity of capillary replacement}, $V^*$ is regular in $\wt{\An}_{s,t}(p)$, and we set $\Sigma' = \spt (\|V^*\| \lc \wt{\An}_{s,t}(p)\cap \iM)$. 
By regularity of $V^*$ (up to the boundary), the self-touching set $\mathcal{S}(\Sigma')$ and boundary $\pr\Sigma'$ are $1$-rectifiable. Moreover, by property (\hyperref[def:generic]{$\star$}), the barrier-touching set $\mathcal{T}(\Sigma')$ is also $1$-rectifiable. Then we may choose $s_2 \in (s,t)$ so that:
\begin{itemize}
    \item[(T)] \label{def:transverse} $\wt{S}^+_{s_2}(p)$ intersects $\Sigma',\partial \Sigma'$ transversely, and $\mathcal{S}(\Sigma'), \mathcal{T}(\Sigma')$ \textit{sparsely}, that is, in sets of zero $1$-dimensional measure. 
\end{itemize}

We then likewise take a second replacement $V^{**}$ of $V^*$ on $\wt{\An}_{s_1,s_2}(p)$, where $s_1< \min(s,s_0)$ and $s_0$ is again given by Lemma \ref{lem:gluing-smooth-mp}. In particular note $s_1 < s < s_2 < t$.

As in \cite[Step 1, Section 6]{Zhou-Zhu17}, the second replacement $V^{**}$ may be constructed so that:

\vspace{0.5em}
\textit{Claim 1:} There exists $\Omega^{**}\in\mathcal{C}(M)$ satisfying the following: Suppose $q\in \spt\|V^{**}\| \cap \wt{S}^+_{s_2}(p)$, and that $\Sigma', \Sigma''$ have multiplicity 1 in a neighbourhood of $q$. Then there exists $\epsilon>0$ so that:
\begin{enumerate}[label=(\alph*)]
\item $V^*\lc \wt{\An}_{s_2,t}(p)$ and $V^{**}$ are given by by $|\partial^\theta\Omega^{**}|$ in $\wt{\An}_{s_2,t}(p)\cap \wt{B}^+_\epsilon(q)$ and $\wt{\An}_{s_1,s_2}(p)\cap \wt{B}^+_\epsilon(q)$ respectively. 
\item if $\|V^{**}\|(\wt{S}^+_{s_2}(p)\cap\wt{B}^+_\epsilon(q))=0$, then $V^{**}$ is given by $|\partial^\theta\Omega^{**}|$ in $\wt{\An}_{s_1,t}(p)\cap \wt{B}^+_\epsilon(q)$. 
\end{enumerate}


\begin{proof}[Proof of Claim 1]
Fix $0<\tau<s_1$. By Proposition \ref{P:good-replacement-property} we have $V^*= \lim_{i\to \infty} |\partial^\theta \Omega_i^*|$ for some $\Om^*_i\in \sA(\wt{\An}_{\tau,r_0}(p); \ep_i, \de_i; \F)$ with $\ep_i, \de_i \to 0$. Let $U= \wt{\An}_{s_1,s_2}(p)\cap \wt{B}^+_\epsilon(q)$. The regularity of $\partial^\theta\Omega_i^*$ in $\wt{\An}_{s,t}(p)$, the compactness Theorem \ref{theorem.compactness.almost.properly.embedded.capillary.surfaces} and the multiplicity 1 assumption imply that the $\pr^\theta\Omega_i^*$ converge locally smoothly to $\Sigma'$ in $U \cap \mathring{M}$, for small enough $\epsilon$. 

For each $\Omega_i^*$, we may apply Lemma \ref{L:minimisation} in $K=\Clos(\wt{\An}_{s_1,s_2}(p))$ to construct a new sequence $\Omega_i^{**}$ satisfying:
\begin{itemize}
    \item $\spt(\Omega_i^*- \Omega_i^{**})\subset K$;
    \item $\Omega_i^{**}$ is locally $\A$-minimising in $\interior_M(K)$;
    \item $V^{**}= \lim_{i\to \infty} |\partial^\theta \Omega_i^{**}|$;
    \item $\partial^\theta \Omega_i^{**} \lc (U\cap\mathring{M})$ converges locally smoothly to $\Sigma''$ (as in the proof of Proposition \ref{P:good-replacement-property}).
\end{itemize}

Now by Lemma \ref{L:weak convergence of reduced currents}, up to taking a subsequence $\partial^\theta\Omega_i^{**}$ will converge weakly as currents to $\partial^\theta \Omega^{**}$ for some $\Omega^{**} \in \mathcal{C}(M)$. Claim 1(a) follows from the locally smooth convergence. The weak convergence implies that $\|\partial^\theta \Omega^{**}\|(U) \leq \|V^{**}\|(U)$. If $\|V^{**}\|(\wt{S}^+_{s_2}(p)\cap\wt{B}^+_\epsilon(q))=0$, then the locally smooth convergence implies that, in fact, $\|\partial^\theta \Omega^{**}\|(U) = \|V^{**}\|(U)$. Claim 1(b) then follows from \cite[2.1(18)(f)]{Pitts} (which only assumes rectifiability).
\end{proof}

Again, by Proposition \ref{L:regularity of capillary replacement}, $V^{**}$ is regular in $\wt{\An}_{s_1,s_2}(p)$. We set $\Sigma'' = \spt(\|V^{**}\| \lc \wt{\An}_{s_1,s_2}(p)\cap \iM)$.  

\vspace{0.5em}
\textbf{Step 2:} \textit{Gluing $V^*$ and $V^{**}$ along $\wt{S}^+_{s_2}(p)$. }
\vspace{0.5em}

The goal is to show that $V^* \lc \wt{\An}_{s,s_2}(p) = V^{**} \lc \wt{\An}_{s,s_2}(p)$. It suffices to show that $\Sigma' \cap \wt{\An}_{s,s_2}(p) = \Sigma'' \cap \wt{\An}_{s,s_2}(p)$ and that the densities match along $\wt{S}^+_{s_2}(p)$.


Let $\Sigma'_i$ be (the relative closures in $\wt{\An}_{s,t}(p)$ of) the connected components of $\Sigma' \cap \mathring{M}$. Similarly let $\Sigma''_j$ be (the relative closures in $\wt{\An}_{s_1,s_2}(p)$ of) the connected components of $\Sigma'' \cap \mathring{M}$. We will use the following claim, which is immediate by unique continuation:

\vspace{0.5em}
\textit{Claim 2 (Gluing)}: Suppose that for some $i,j$, there is $q \in \wt{S}^+_{s_2}(p)$ and a neighbourhood $U$ of $q$ so that we have the local gluing $\Sigma'_i \cap U \cap \wt{\An}_{s,s_2}(p) = \Sigma''_j \cap U$. Then the components match, i.e. 
$\Sigma'_i \cap \wt{\An}_{s,s_2}(p) = \Sigma''_j \cap \wt{\An}_{s,s_2}(p)$.

Recall that $\Sigma'$ is a regular capillary surface on $\wt{\An}_{s,t}(p)$, in particular across $\wt{S}^+_{s_2}(p)$. Then Lemma \ref{lem:gluing-smooth-mp} and property (\hyperref[def:generic]T) imply that:
\begin{itemize}
    \item[($\dagger$)] Every component $\Sigma'_i$ must intersect $\wt{S}^+_{s_2}(p)$ inside $\mathring{M}$. 
\end{itemize}
(In particular, the sparse intersection with $\mathcal{T}(\Sigma')$ rules out case (2) of Lemma \ref{lem:gluing-smooth-mp}, and transversality implies that each component has points on both sides of $\wt{S}^+_{s_2}(p)$.)

\vspace{0.5em}
\textbf{Step 2a:} \textit{Gluing across interior points. }

In the interior $\mathring{M}$, the gluing procedures in \cite[Step 2, Section 6]{Zhou-Zhu17} (for $c\neq 0$) and \cite[Lemma 7.10]{Pitts} (for $c=0$) proceed verbatim. In particular, from the arguments in \cite[Step 2, Section 6]{Zhou-Zhu17}, we have that $\Sigma'$ glues smoothly with $\Sigma''$ across the interior intersection
\[\Gamma:= \Clos(\Sigma'') \cap \wt{S}^+_{s_2}(p) \cap \mathring{M} = \Sigma'\cap \wt{S}^+_{s_2}(p) \cap\mathring{M}.\] 

(For the readers' convenience we sketch that: for the forward inclusion, one may apply (\ref{eq:mp-inside-outside}) to $V^{**}$ and take the intersection with $\mathring{M}$; 
the reverse inclusion follows from the transverse intersection of $\Sigma'$ with $\wt{S}^+_{s_2}(p)$.)

By ($\dagger$) and Claim 2, the gluing across $\Gamma$ implies that $\Sigma' \cap \wt{\An}_{s,s_2}(p) \subset \Sigma'' \cap \wt{\An}_{s,s_2}(p).$ (It remains to rule out new components of $\Sigma''$ inside $\wt{\An}_{s,s_2}(p)$.)

In particular, $\Clos(\Sigma'')$ is a $C^1$-regular surface with boundary near any point of $\Clos(\Gamma)$. 

\vspace{0.5em}
\textbf{Step 2b:} \textit{$C^1$ regularity in the exceptional case. }

We now investigate the regularity of the remaining interface points of $\Sigma''$, that is, those $q\in (\Clos(\Sigma'') \cap \wt{S}^+_{s_2}(p)\cap \partial M)\setminus \Clos(\Gamma)$. As $(\Sigma',\pr \Sigma')$ intersect $\wt{S}^+_{s_2}(p)$ transversely, any such $q$ has a neighbourhood $U$ on which $U\cap \Clos(\Sigma') = \emptyset$ and $U\cap \Clos(\Sigma'') \cap \wt{S}^+_{s_2}(p)\subset \partial M$.

Consider now a tangent cone $C^{**} = \lim_{i\to\infty} (\mathbf{\eta}_{q,r_i})_\# V^{**}$ of $V^{**}$ at $q$, and the corresponding tangent cone $C^{*} = \lim_{i\to\infty} (\mathbf{\eta}_{q,r_i})_\# V^{*}$ of $V^{*}$. The gluing interface blows up to $T_q\wt{S}^+_{s_2}(p)$, which divides $T_q M$ into two quarter-spaces corresponding to $\wt{\An}_{s,s_2}(p)$ and $\wt{\An}_{s_2,t}(p)$. We refer to these as the $V^{**}$ and $V^*$ side respectively. 

Since $V^{**}$ coincides with $V^*$ on $\wt{\An}_{s_2, t}(p)$, the tangent cones $C^*$ and $C^{**}$ must coincide on the $V^*$ side. But $U\cap \Clos(\Sigma')=\emptyset$, so $C^*$ has no support in the interior and must be either $0$ or $\cos\theta[T_q\partial M]$. Thus the only possibilities for $C^{**}$ are types (1) or (4; $k=0$) in Theorem \ref{thm:tangent-varifolds}.

Now as the density at any point on $\Sigma''$ is a positive integer, so the $A_{s,s_2}(p)$ side of $C^{**}$ contributes density at least $\frac{1}{2}$, This rules out case (4) for $C^{**}$, which has density $\cos\theta$ everywhere. This leaves case (1) - a capillary half-plane - and since $U\cap \Clos(\Sigma'') \cap \wt{S}^+_{s_2}(p) \subset \partial M$, it can only be the unique capillary half-plane on the $V^{**}$ side which has boundary $T_q(\wt{S}^+_{s_2}(p) \cap \partial M)$. 

Thus we have shown that any tangent cone at $q$ is a unique capillary half-plane. This implies that $V^{**}$ is a $C^1$-regular surface with boundary near $q$.  

\vspace{0.5em}
\textbf{Step 2c:} \textit{There is no exceptional case.}

Combining Steps 2a and 2b, we have that $\Clos(\Sigma'')$ is (at least) $C^1$ regular up to the gluing interface $\wt{S}^+_{s_2}(p)$. Then by the smooth maximum principle (Lemma \ref{lem:gluing-smooth-mp}), we have that:
\begin{itemize}
    \item[($\ddagger$)] Each component $\Sigma''_j$ either has nontrivial intersection with $\wt{S}^+_{s_2}(p)\cap \mathring{M}$, or intersects $\wt{S}^+_{s_2}(p)$ only in points tangent to $\pr M$. 
\end{itemize} 

Suppose $\Sigma''_j$ is tangent to $\pr M$ at $q\in \wt{S}^+_{s_2}(p)$. Then $T_q V^{**} = \Theta[T_q \pr M]$, where $\Theta \geq 1$. Since $V^{**}$ coincides with $V^{*}$ on $\wt{\An}_{s_2,t}(p)$, it follows that $T_q V^*= T_q V^{**} = \Theta[T_q \pr M]$ and in particular $q\in \spt\|V^*\|$. If $q\in \spt\|V^*\| \setminus \Sigma'$ then the density at $q$ would be $\cos\theta<1$, which contradicts $\Theta\geq1$. So we must have $q\in \Sigma'$. Then since $\wt{S}^+_{s_2}(p)$ intersects $(\Sigma',\pr\Sigma')$ transversely and $\mathcal{T}(\Sigma')$ sparsely, we conclude that $q\in \Clos(\Sigma' \cap \wt{S}^+_{s_2}(p)\cap \mathring{M})=\Clos(\Gamma)$. 

Thus in either case of ($\ddagger$), the gluing across $\Clos(\Gamma)$, together with Claim 2, now implies that $\Sigma''_j \cap \wt{An}_{s,s_2}(p)\subset \Sigma' \cap \wt{\An}_{s,s_2}(p)$ for each $j$. In particular, the exceptional case of Step 2b does not occur \textit{a posteriori}. Together with Step 2a we conclude that $\Sigma' \cap \wt{An}_{s,s_2}(p) = \Sigma'' \cap \wt{\An}_{s,s_2}(p)$.

\vspace{0.5em}
\textbf{Step 2d:} \textit{Density matching.}

Consider $q\in \Sigma' \cap \wt{S}^+_{s_2}(p) = \Clos(\Sigma'') \cap \wt{S}^+_{s_2}(p)$. As in Step 2b, the tangent cones of $V^*$ and $V^{**}$ at $q$ must coincide on the quarter-space of $T_qM$ corresponding to $\wt{An}_{s_2,t}(p)$. Since $\pr \Sigma'$ intersects $\wt{S}^+_{s_2}(p)$ transversely, it follows from the classification of tangent cones Theorem \ref{thm:tangent-varifolds} that $\Theta^n(\|V^*\|,q) = \Theta^n(\|V^{**}\|,q)$. We conclude that $V^* \lc \wt{\An}_{s,s_2}(p) = V^{**} \lc \wt{\An}_{s,s_2}(p)$ as desired.

\vspace{0.5em}
\textbf{Step 3:} \textit{Unique continuation up to the centre. }
\vspace{0.5em}

By unique continuation, we may repeat the above process for arbitrarily small $s_1$, and define $\tilde{V} = \lim_{s_1\to0} V^{**}_{s_1}$. The surface $\tilde{\Sigma} = \bigcup_{s_1} \Sigma''_{s_1}$ will be a regular capillary surface in $\wt{B}^+_{s_2}(p)\setminus \{p\}$. 

\vspace{0.5em}
\textbf{Step 4:} \textit{Removable singularity at $p$. }
\vspace{0.5em}

We now wish to extend $\tilde{\Sigma}$ as a regular capillary surface through $p$. A blowup argument as in the proof of Theorem \ref{thm:tangent-varifolds} implies that all tangent cones of $\tilde{V}$ at $p$ are of one of the types (1-4). (Noe that $\tilde{V}$ was already constructed via continued replacements, so that argument actually goes through without taking further replacements.) We now divide into cases depending on the tangent cone type:

\vspace{0.5em}
\textit{Case 1:} All tangent cones of $\tilde{V}$ at $p$ are of type (1). 

That is, every tangent cone is induced by a multiplicity 1 capillary half-plane. Then by Claim 1, $\tilde{\Sigma}$ is locally the boundary of some $\tilde{\Omega}$ in $\wt{B}^+_\sigma(p)\setminus \{p\}$. (Note that the hypothesis of Claim 1(b) is automatically satisfied as a consequence of interior gluing.) So we immediately deduce from Theorem \ref{theorem:removal.singularity} that $\tilde{\Sigma}$ extends as a regular capillary surface through $p$ (with unique tangent cone). 

\vspace{0.5em}
\textit{Case 2:} All tangent cones of $\tilde{V}$ at $p$ are of type (2). 

That is, every tangent cone is induced by a pair of multiplicity 1 capillary half-planes. Again by Claim 1, $\tilde{\Sigma}$ is locally the boundary of some $\tilde{\Omega}$ in $\wt{B}^+_\sigma(p)\setminus \{p\}$.
The plan is to apply Theorem \ref{theorem:removal.singularity} to `both sides'. First, since $\tilde{\Sigma}$ is (boundary) regular away from $p$, for sufficiently small $\sigma_0>0$ we have $\partial \tilde{\Sigma}\cap \Clos(\wt{B}^+_{\sigma_0}(p)) = \gamma$, where each $\gamma$ is the union of finitely many continuously embedded segments in $\Clos(\wt{B}^+_{\sigma_0}(p))$, smooth in $\wt{B}^+_{\sigma_0}(p)\setminus \{p\}$, each with one endpoint at $p$ and the other on $\wt{S}^+_{\sigma_0}(p)$. 

Take local coordinates so that $M$ corresponds to $\{x_3 >0\}$, and let $C_\gamma$ be the surface corresponding to the cylinder $\gamma \times \mathbb{R}_+$ over $\gamma$. We say that a surface $S$ is \textit{separated by} $C_\gamma$ if it consists of precisely 2 connected components $S_1,S_2$, each in a distinct component of $(B_s(p) \cap \mathring{M}) \setminus C_\gamma$. We claim that for any sufficiently small $\sigma>0$, the surface $\tilde{\Sigma}\cap \An_{\sigma/2,\sigma}(p)$ is separated by $C_\gamma$. 

First, by regularity of $\tilde{\Sigma}$ away from $p$, we have that 
\begin{equation}
\label{eq:step4-2-0}
(\tilde{\Sigma} \cap \mathcal{N}_\sigma(\gamma)) \setminus B_{\sigma/10}(p) \text{ is separated by $C_\gamma$}
\end{equation} for sufficiently small $\sigma>0$, where $\mathcal{N}_\sigma$ denotes the $\sigma$-neighbourhood. Now since any tangent cone is of type (2), for any sequence $\sigma_i\to 0$ there is a line $L\subset T_p \partial M$ such that 
\begin{equation}\label{eq:step4-2} \eta_{p,\sigma_i}(\tilde{\Sigma})\to [H+H'],\end{equation}
where $H,H'$ is the unique pair of half-planes which each intersect $T_p\partial M$ along $L$ at angle $\theta$. In particular, $\eta_{p,\sigma_i}(\gamma)$ Hausdorff converges to $L$. 

The point is that even though $L$ may depend on the blow-up sequence, the claimed property does not depend on $L$. Indeed, suppose that the claim is false, and consider a sequence of counterexamples $\sigma_i \to 0$. Then we have the convergence in (\ref{eq:step4-2}), and by stability and our curvature estimates, this convergence will be smooth away from $L$. In particular, the convergence is smooth in $\An_{1/2,1}(p) \setminus \mathcal{N}_{1/10}(L)$, which implies that $\tilde{\Sigma} \cap \An_{\sigma_i/2, \sigma_i}(p) \setminus \mathcal{N}_{\sigma_i/9}(\gamma)$ is separated by $C_\gamma$ for large enough $i$. Combining with (\ref{eq:step4-2-0}) by unique continuation implies that $\tilde{\Sigma} \cap \An_{\sigma_i/2,\sigma_i}(p)$ is indeed separated by $C_\gamma$. This contradiction completes the proof of the claim. 

Thus $\tilde{\Sigma}\cap \An_{\sigma/2,\sigma}(p)$ is separated by $C_\gamma$ into $\tilde{\Sigma}_i(\sigma)$, $i=1,2$. As $\sigma$ was arbitrary we can fix $\sigma_0$ and continue $\tilde{\Sigma}_i(\sigma_0)$ to  $B_{\sigma_0}(p)\setminus\{p\}$. For convenience we denote the continuations by $\tilde{\Sigma}_i$. Then each $\tilde{\Sigma}_i$ is a regular capillary surface in $B_{\sigma_0}(p)\setminus\{p\}$, lying in a distinct component of $(B_{\sigma_0}(p) \cap \mathring{M}) \setminus C_\gamma$. (In particular $\tilde{\Sigma}$ is separated by $C_\gamma$ into $\tilde{\Sigma}_i$, $i=1,2$.)


But now all tangent cones of either $\tilde{\Sigma}_i$ are of type (1), so we may apply Theorem \ref{theorem:removal.singularity} to each $\tilde{\Sigma}_i$ to conclude that each extends to a regular capillary surface in $B_{\sigma_1}(p)$. This implies that $\tilde{\Sigma}$ extends to a regular capillary surface in $\wt{B}^+_s(p)$, as desired. 

\vspace{0.5em}
\textit{Case 3:} Suppose the tangent cones of $\tilde{V}$ at $p$ are of type (3) or (4). Then the tangent cone at $p$ is in fact unique, since it is a multiple of $[T_p\partial M]$ distinguished by density. This implies that for sufficiently small $\rho$, $\tilde{\Sigma} \cap(\wt{B}^+_\rho(p)\setminus\{p\})$ has a graphical decomposition over $T_p\partial M$. Applying Allard regularity and then elliptic regularity to each sheet (as in the interior setting) gives the extension across $p$. 

\vspace{0.5em}
\textbf{Step 5:} \textit{$V$ coincides with $\tilde{V}$ on a small enough ball.}
\vspace{0.5em}

It is enough to show that $\Sigma$ glues with $\tilde{\Sigma}$ along $\wt{S}^+_\rho(p)$, for sufficiently small $\rho<s_2$. Following exactly the same gluing process as Step 2, with $V^{**}_\rho,V$ taking the place of $V^*, V^{**}$ respectively, implies the result. In particular, the (boundary) regularity of $\Sigma$ up to $\wt{S}^+_\rho(p)$ is not known \textit{a priori}, but now $\Sigma''_{\rho}$ is regular across $\wt{S}^+_\rho(p)$ (since it extends as $\tilde{\Sigma}$). The gluing procedure of Step 2 shows that $\Sigma$ glues smoothly with $\Sigma''_\rho$ across $\tilde{\Gamma}:= \Sigma \cap \wt{S}^+_{\rho}(p) = \Sigma''_\rho\cap \wt{S}^+_{\rho}(p)$, and that $\Clos(\Sigma)\cap \wt{S}^+_\rho(p) = \Clos(\tilde{\Gamma})$; thus by unique continuation $\Sigma$ coincides with $\tilde{\Sigma}$ and is regular on $\wt{B}^+_\rho(p)$. 
\end{proof}

\appendix

\section{An interpolation lemma}

The following interpolation lemma is an adaption of \cite[Lemma A.1]{Zhou-Zhu17}. Compared with \cite[Lemma A.1]{Zhou-Zhu17}, the main difference is item (iv). Now we need to control the mass of the capillary boundary currents, whereas previously we only need to control the mass of the usual boundary currents.
\begin{lemma}
\label{L:interpolation lemma}
Suppose $L>0$, $\eta>0$, $W$ is a compact subset of a relatively open set $U\subset M$, and $\Omega\in \C(M)$. Then there exists $\delta = \delta(L, \eta, U, W, \Omega)>0$, such that for any $\Omega_1, \Omega_2\in \C(M)$ satisfying
\begin{itemize}
    \item[(a)] $\spt(\Omega_i-\Omega)\subset W$, $i=1, 2$,
    \item[(b)] $\M(\partial\Omega_i)\leq L$, $i=1, 2$,
    \item[(c)] $\F(\Omega_1, \Omega_2)\leq \delta$,
\end{itemize}
there exists a sequence $\Omega_1 =\Lambda_0, \Lambda_2, \cdots, \Lambda_m = \Omega_2 \in \C(M)$ such that for each $j=0, 1, \cdots, m-1$,
\begin{itemize}
    \item[(i)] $\spt(\Lambda_j - \Omega) \subset U$,
    \item[(ii)] $\A (\Lambda_j) \leq \max\{ \A(\Omega_1), \A(\Omega_2)\} + \eta$,
    \item[(iii)] $\M(\partial \Lambda_{j} - \partial\Lambda_{j+1}) \leq \eta$,
    \item[(iv)] $\M(\partial^\theta\Lambda_j) \leq \max\{ \M(\partial^\theta \Omega_1), \M(\partial^\theta\Omega_2)\} + \eta/2$, 
    \item[(v)] $\M(\Lambda_j - \Omega_i) \leq \frac{\eta}{2c}$, for $i =1, 2$.
\end{itemize}
\end{lemma}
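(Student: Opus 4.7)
The proof adapts \cite[Lemma A.1]{Zhou-Zhu17}; the only new ingredient is item (iv), where the capillary mass $\M(\partial^\theta \Lambda_j)$ must be tracked in place of $\M(\partial \Lambda_j)$.

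Choose a small mesh $h > 0$ and a cubical grid in $\R^L$ of size $h$, adapted near $\pM$ via Fermi coordinates so that each cube $K_i$ meeting $W$ either lies in $\iM$ or has exactly one face contained in $\pM$ (and all remaining faces transverse to $\pM$). By a Fubini/translation argument over grid translates, we may further assume that
\[ \textstyle\sum_i \mH^n(\partial_{\mathrm{rel}} K_i \cap (\Omega_1 \triangle \Omega_2)) \leq C\delta/h, \]
where $\partial_{\mathrm{rel}} K_i$ denotes the union of the faces of $K_i$ not contained in $\pM$, and we use that $\M(\Omega_1-\Omega_2) = \F(\Omega_1-\Omega_2) \leq \delta$ since $\Omega_1,\Omega_2$ are top-dimensional. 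Enumerate the finitely many cubes $K_{i_1}, \ldots, K_{i_N} \subset U$ meeting $W$ in non-decreasing order of
\[ \Delta_{i}^\theta := \M(\partial^\theta \Omega_2 \lc K_{i}) - \M(\partial^\theta \Omega_1 \lc K_{i}), \]
set $E_j = \bigcup_{k \leq j} K_{i_k}$ and define $\Lambda_j$ by $\chi_{\Lambda_j} = \chi_{\Omega_2}\chi_{E_j} + \chi_{\Omega_1}(1 - \chi_{E_j})$.

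A direct computation gives $\partial \Lambda_j = \partial \Omega_2 \lc E_j + \partial \Omega_1 \lc (M \setminus E_j) + T_j$, where $T_j$ is an interface current supported on $\partial_{\mathrm{rel}} E_j \cap (\Omega_1 \triangle \Omega_2)$. Crucially, by our choice of grid $\partial_{\mathrm{rel}} E_j \subset \iM$, so $T_j$ is entirely interior. Hence
\[ \M(\partial^\theta \Lambda_j) = \M(\partial^\theta \Omega_1) + \sum_{k \leq j} \Delta_{i_k}^\theta + \M(T_j). \]
Since the $\Delta_{i_k}^\theta$ are ordered non-decreasingly and their full sum equals $\M(\partial^\theta \Omega_2) - \M(\partial^\theta \Omega_1)$, the partial sums $\sum_{k \leq j}\Delta_{i_k}^\theta$ attain their maximum at $j=0$ or $j=N$, so that $\M(\partial^\theta \Omega_1) + \sum_{k \leq j} \Delta_{i_k}^\theta \leq \max\{\M(\partial^\theta \Omega_1), \M(\partial^\theta \Omega_2)\}$. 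Taking $\delta$ small enough that $\M(T_j) \leq C\delta/h \leq \eta/2$ then yields (iv).

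Items (i), (iii), (v) follow as in \cite{Zhou-Zhu17}: (i) holds for $h$ sufficiently small; (v) follows from $\M(\Lambda_j - \Omega_i) \leq \M(\Omega_1 - \Omega_2) \leq \delta$, chosen smaller than $\eta/(2|c|)$ (the case $c=0$ rendering (v) vacuous); and (iii) holds because a single cube swap changes $\partial \Lambda_j$ only within $K_{i_j}$ and $\partial_{\mathrm{rel}} K_{i_j}$, whose combined mass is $O(h^n)$ plus a small interface contribution, controlled by first taking $h$ small and then $\delta$ small. Item (ii) follows from $\A(\Lambda) = \M(\partial^\theta \Lambda) - c\,\mH^{n+1}(\Lambda)$, combined with (iv) and (v). The main technical point, and the only substantive departure from \cite{Zhou-Zhu17}, is the adaptation of the cubical grid to $\pM$: this keeps the interface currents $T_j$ interior, so that they contribute to the capillary mass with the full weight one rather than $\cos\theta$, after which the adaptive ordering by $\Delta_i^\theta$ gives (iv) in direct analogy with the unweighted argument.
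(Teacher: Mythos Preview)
Your approach is genuinely different from the paper's, and it has a gap in item (iii). The paper does not use a cube-swapping construction at all: following \cite[Lemma A.1]{Zhou-Zhu17} and \cite[Lemma B.1, B.3]{Li-Zhou16}, it reduces (by a covering argument) to the case of a fixed target $\Omega_2$, argues by contradiction taking a sequence $\Omega_l$ with $\F(\Omega_l,\Omega_2)\to 0$, passes to the varifold limit $V=\lim|\partial^\theta\Omega_l|$, and uses the inequality $\|\partial^\theta\Omega_2\|(A)\leq \|V\|(A)$ (coming from lower semicontinuity and the decomposition $\partial^\theta\Omega=(1-\cos\theta)\partial\Omega\lc\iM+\cos\theta\,\partial\Omega$) to feed directly into the nested cut-and-paste construction of \cite[Lemma B.3]{Li-Zhou16}. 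The only change from the free-boundary case is replacing $|\partial\Omega_l|$ by $|\partial^\theta\Omega_l|$ in the varifold limit; items (i)(iii)(iv) then follow from that construction, (v) from the same volume estimate as in \cite{Zhou-Zhu17}, and (ii) from (iv) and (v).

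The gap in your argument is the claim that a single cube swap changes $\partial\Lambda_j$ by mass $O(h^n)$. Going from $\Lambda_j$ to $\Lambda_{j+1}$ replaces $\partial\Omega_1$ by $\partial\Omega_2$ inside the cube $K=K_{i_{j+1}}$, so
\[
\M(\partial\Lambda_{j+1}-\partial\Lambda_j)\;\geq\;\M\big((\partial\Omega_2-\partial\Omega_1)\lc\interior K\big),
\]
and there is no reason for $\M(\partial\Omega_i\lc K)$ to be $O(h^n)$: an arbitrary Caccioppoli boundary of total mass $\leq L$ can concentrate essentially all of its mass in a single small cube. Your Fubini bound controls only the \emph{interface} mass on cube faces, not the boundary mass inside cubes. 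This is precisely why the standard Almgren--Pitts interpolation passes through a varifold limit: the limit measure $\|V\|$ gives \emph{uniform} local control on the masses $\M(\partial^\theta\Omega_l\lc B)$, and the ``excess'' $\|V\|(B)-\|\partial^\theta\Omega_2\|(B)$ can be made small on well-chosen small sets $B$, which is what makes the fine cut-and-paste steps in \cite[Lemma B.3]{Li-Zhou16} go through. Your ordering trick for (iv) is neat and does work (the interface currents $T_j$ are indeed interior for a boundary-adapted grid), but without a mechanism to control $\M(\partial\Omega_i\lc K)$ cube by cube, (iii) fails and the chain cannot have small mass steps.
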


Here We only need to consider the case when $U\cap \pM \neq \emptyset$, as otherwise the result is exactly \cite[Lemma A.1]{Zhou-Zhu17}. An interpolation result between two relative integral cycles were discussed in \cite[Lemma B.1, Lemma B.3]{Li-Zhou16}. Our results can be adapted from there with several minor modifications, which we will point out. As noted in the proof of \cite[Lemma A.1]{Zhou-Zhu17}, we only need to prove the case when $\Omega_2$ is fixed, and the general case follows by a covering argument. By a contradiction argument, we can find a sequence $\{\Omega_l\}_{l\in\N}\subset\C(M)$ satisfying (a)(b) and $\F(\Omega_l, \Omega)\to 0$. We need to construct a sequence $\{\Lambda_j\}_{j=1}^m$ connecting $\Omega_l$ to $\Omega$ satisfying (i)-(v) when $l$ is sufficiently large. Since we focus on the mass of capillary boundary currents, the key step in \cite[Lemma B.3]{Li-Zhou16} to modify is to change the varifold limit in the line above \cite[(B.1)]{Li-Zhou16} to $V = \lim_{j\to\infty}|\partial^\theta\Omega_j|$. By the lower semi-continuity of mass and~\eqref{eq:boundary current decomposition}, we have
\[ |\partial^\theta\Omega|(A) \leq \|V\|(A), \text{ for all Borel } A\subset M. \]
One can then follow exactly the remaining construction in \cite[Lemma B.3]{Li-Zhou16} to construct the desired sequence of $\{\Lambda_j\}_{j=1}^m$ which satisfy (i)(iii)(iv). Item (v) will follow from the same reasoning as \cite[Page 483, (2)]{Zhou-Zhu17}. Item (ii) is a direct corollary of (iv) and (v). Note that in terms of notations, we just need to change the $S_j$'s (in \cite{Li-Zhou16}) to $\Omega_l$'s.

\section{Foliations of Fermi half-balls and the maximum principle}

\label{sec:gluing-smooth-mp}

\begin{lemma}
Fix $\theta\in(0,\frac{\pi}{2})$. There exists $s^*$ depending only on $(M,g)$ and $\theta$ such that for any $s< s^*$, $p\in M$, there are smooth surfaces $\wt{S}_{s, \gamma}(p)$, $\gamma \in [\theta, \frac{\pi}{2}]$ which foliate a region in $\wt{B}^+_s(p)$ such that:
\begin{enumerate}
    \item $\wt{S}_{s, \frac{\pi}{2}}(p) = \wt{S}^+_s(p)$;
    \item $\wt{S}_{s,\gamma}(p)$ intersects $\pr M$ precisely on $\wt{S}^+_s(p)\cap \pr M$, with contact angle at most $\gamma$;
    \item The mean curvature on $\wt{S}_{s,\gamma}(p)$ is bounded below by $h(s,\gamma)>0$, which satisfies \[\lim_{s\to 0} h(s,\gamma) = \infty\] for every $\gamma$. 
\end{enumerate}
\end{lemma}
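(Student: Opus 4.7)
The case $p \in \mathring{M}$ with $\wt{B}^+_s(p) = \wt{B}_s(p)$ is easy: I take $\wt{S}_{s,\gamma}(p) := \wt{S}_{s f(\gamma)}(p)$ for a smooth monotone bijection $f: [\theta, \tfrac{\pi}{2}] \to [\tfrac{1}{2}, 1]$ with $f(\tfrac{\pi}{2})=1$. These concentric geodesic spheres foliate an annular region, (2) is vacuous, and the standard comparison of mean curvatures of geodesic spheres against the Euclidean model gives $H_g \geq 2/s - C$, which diverges as $s \to 0$.

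For the main case $p \in \pr M$, I set up Fermi coordinates centered at $p$, identifying a neighborhood with a half-ball in $\mathbb{R}^3_+$ so that $\pr M = \{x_3 = 0\}$, $p = 0$, and the metric has the form $g = dx_3^2 + g_{\alpha\beta}(x)\, dx^\alpha dx^\beta$ with $g_{13} \equiv g_{23} \equiv 0$. In these coordinates, $\wt{B}^+_s(p)$ is the Euclidean half-ball $\{|x| < s,\, x_3 \geq 0\}$, and $\wt{S}^+_s(p)$ is the Euclidean hemisphere of radius $s$. Combining the expansions in geodesic normal coordinates on $\pr M$ with the Gauss lemma yields $g_{\alpha\beta}(x) - \delta_{\alpha\beta} = O(|x|)$ on $\wt{B}^+_s(p)$, with the refined estimate $g_{\alpha\beta}(x',0) - \delta_{\alpha\beta} = O(|x'|^2)$ on $\pr M$. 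For each $\gamma_0 \in (0, \tfrac{\pi}{2}]$, let $\mathcal{C}_{s, \gamma_0}$ denote the Euclidean cap of the sphere of radius $s/\sin\gamma_0$ centered at $(0, 0, -s \cot\gamma_0)$, restricted to $\{x_3 \geq 0\}$. A direct computation shows that each $\mathcal{C}_{s, \gamma_0}$ passes through the common boundary circle $\wt{S}^+_s(p) \cap \pr M = \{|x'| = s,\, x_3 = 0\}$, makes Euclidean contact angle $\gamma_0$ with $\pr M$, has Euclidean mean curvature $2\sin\gamma_0/s$, and that as $\gamma_0$ varies these caps are strictly nested and together foliate the region in the half-ball bounded by the flat disk and the hemisphere.

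I then define $\wt{S}_{s,\gamma}(p) := \mathcal{C}_{s, \gamma_0(\gamma,s)}$, where $\gamma_0(\gamma, s) \in (0, \tfrac{\pi}{2}]$ is chosen so that the supremum over the boundary circle of the \emph{Riemannian} contact angle of $\mathcal{C}_{s, \gamma_0}$ equals $\gamma$. Using $g_{13} \equiv g_{23} \equiv 0$ together with $g_{\alpha\beta}(x', 0) - \delta_{\alpha\beta} = O(|x'|^2)$, the Riemannian contact angle equals $\gamma_0 + O(s^2)$ uniformly along the boundary circle, and exactly $\tfrac{\pi}{2}$ when $\gamma_0 = \tfrac{\pi}{2}$ (the orthogonality of $\wt{S}^+_s(p)$ to $\pr M$ in both metrics); the implicit function theorem then produces a smooth $\gamma_0(\gamma, s) = \gamma + O(s^2) \in [\theta/2, \tfrac{\pi}{2}]$ with $\gamma_0(\tfrac{\pi}{2}, s) = \tfrac{\pi}{2}$, for all sufficiently small $s$. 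Properties (1) and (2) are then immediate. For (3), since $g - \delta = O(s)$ on the half-ball and the Euclidean second fundamental form of $\mathcal{C}_{s, \gamma_0}$ is of order $1/s$, the Riemannian mean curvature satisfies $H_g = 2\sin\gamma_0/s + O(1) \geq \sin\theta/s =: h(s, \gamma)$ for sufficiently small $s$, and $h(s, \gamma) \to \infty$ as $s\to 0$. The main technical step is the uniform bookkeeping of Fermi-coordinate errors for the contact angle and mean curvature, but these reduce to standard calculations; the key geometric point ensuring the sharpness required for (1) is precisely the vanishing of $g_{13}$ and $g_{23}$ in Fermi coordinates.
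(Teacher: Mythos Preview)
Your approach is essentially the paper's: both transplant Euclidean spherical caps (sharing the fixed boundary circle $\wt{S}^+_s(p)\cap\pr M$) into $M$ via Fermi/normal coordinates, then verify (2) and (3) by comparing the Riemannian contact angle and mean curvature to the Euclidean model at scale $s$. The only substantive difference is in how the reparametrisation $\gamma\mapsto\gamma_0$ is chosen. The paper fixes an $s$-independent affine map $\tfrac{\pi}{2}-\alpha(\gamma)=(1+\mu)\bigl(\tfrac{\pi}{2}-\gamma\bigr)$ for small $\mu>0$, so that the Euclidean contact angle $\alpha(\gamma)$ is strictly below $\gamma$ for $\gamma<\tfrac{\pi}{2}$ and equals $\tfrac{\pi}{2}$ at $\gamma=\tfrac{\pi}{2}$; the Riemannian error is then absorbed by this built-in slack, and smooth dependence of the family on $\gamma$ is automatic since $\alpha$ is linear.

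Your choice --- solve $\sup_q\bigl(\text{Riemannian contact angle of }\mathcal{C}_{s,\gamma_0}\bigr)=\gamma$ for $\gamma_0$ --- is tighter but has a small gap: the supremum over the boundary circle of a smooth family is in general only Lipschitz in $\gamma_0$, not $C^1$, so the implicit function theorem does not apply as stated. You can still obtain a \emph{continuous} $\gamma_0(\gamma,s)$ by strict monotonicity and the intermediate value theorem (and continuous dependence is all that the downstream maximum-principle argument in Lemma~\ref{lem:gluing-smooth-mp} actually uses), or simply adopt the paper's slack reparametrisation and sidestep the issue entirely.
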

\begin{proof}
  
  \begin{figure}[ht]
      \centering
      \includegraphics{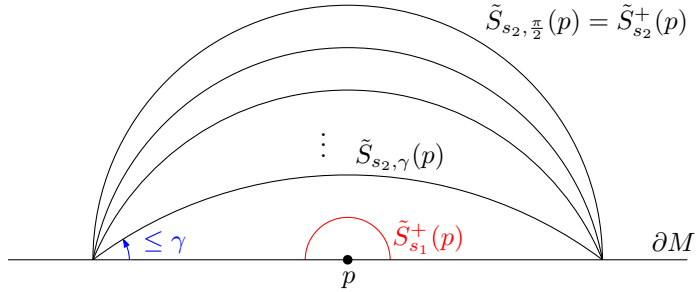}
      \caption{Foliation of $\tilde S_s^+(p)$. Observe that (by the curvature bound) the foliation does not intersect Fermi half-balls $\wt{S}^+_{s_1}(p)$ for small $s_1$.}

  \end{figure}

    We will take $s^*$ smaller than the normal injectivity radius of $\pr M$.
    
    Let $S_{s,\alpha}$ be the spherical cap in $\mathbb{R}^{n+1}_+$ centred on the $x_{n+1}$-axis, which contacts the barrier $\mathcal{B}:=\pr\mathbb{R}^{n+1}_+$ along $S^+_s\cap \mathcal{B}$ at constant angle $\alpha$. (Note $S_{s,\frac{\pi}{2}} = S^+_s$ is the hemisphere of radius $s$ centred at the origin.) We define $\wt{S}^+_{s,\gamma}(p)$ to be the image of $S_{s,\alpha(\gamma)}$ under the normal exponential map centred at $p$, where $\frac{\pi}{2}-\alpha(\gamma) = (1+\mu)(\frac{\pi}{2}-\gamma)$ for some $\mu\in(0,\frac{\theta}{\frac{\pi}{2}-\theta})$. Then item (1) and the first statement of (2) are certainly satisfied.
    
    For the second statement of (2), note that the rescaling $\eta_{0,1/s_i}(\wt{S}^+_{s_i,\gamma}(p_i)-p_i)$ converges smoothly to the spherical cap $S_{1,\alpha(\gamma)}$ (for any sequence $p_i \in \pr M$). Since $\frac{\frac{\pi}{2}-\alpha(\gamma)}{\frac{\pi}{2}-\gamma} =1+\mu>1$, this implies that for small enough $s^*$, $\wt{S}^+_{s,\gamma}$ has boundary contact angle at most $\gamma$ (in fact, equality if and only if $\gamma =\frac{\pi}{2}$). Similarly as the mean curvature of $S_{1,\alpha}$ is strictly positive, item (3) also follows from the smooth convergence. 
\end{proof}

    
    
    
    

\begin{proof}[Proof of Lemma \ref{lem:gluing-smooth-mp}]
Take $s_*$ small enough so that $h(s,\gamma)>c$ for $s<s_*$. Given $s_2<s_*$, take $s_0$ small enough so that $\wt{S}^+_{s_0}(p)\cap \wt{S}_{s_2, \theta}=\emptyset$. As in the statement of the lemma, let $\Sigma$ be a connected, regular capillary surface in the annular region $\wt{\An}_{s_1,s_2}(p)$, which is $C^1$ up to the outer boundary $\wt{S}^+_{s_2}(p)$. Let $\Gamma:= \Clos(\Sigma) \cap \wt{S}^+_{s_2}(p)$ and suppose for the sake of contradiction that $\Gamma \cap \mathring{M} = \emptyset$ and there is some $q_0\in\Gamma$ at which $\Sigma$ is not tangent to $\pr M$. Then $\Sigma$ must meet $\pr M$ at $q_0$ with contact angle $\theta$.

Now suppose that $\Sigma$ is contained in the region bounded by $\wt{S}_{s_2,\theta}(p)$ and $\pr M$. Since $\wt{S}_{s_2,\theta}(p)$ has contact angle at most $\theta$, and mean curvature at least $h(s_2, \theta)>c$, this violates the maximum principle at $q_0$. 

But then $\Sigma$ must intersect some $\wt{S}_{s_2,\gamma}(p)$ in $\mathring{M}$; in particular consider $\gamma_* = \sup \{ \gamma | \Sigma\cap \wt{S}_{s_2,\gamma}(p) \cap \mathring{M} \neq \emptyset\}$. Then $\Sigma$ is contained in the region bounded by $\wt{S}_{s_2,\gamma}(p)$ and $\pr M$. Moreover, since $\wt{S}_{s_2,\gamma}(p) \cap \pr M = \wt{S}^+_{s_2}(p)\cap \pr M$ for every $\gamma$, we see that $\Sigma$ must intersect $\wt{S}_{s_2,\gamma_*}(p)$ tangentially at some point $q\in M$ (which may be either interior or boundary). Since $\wt{S}_{s_2,\gamma_*}(p)$ has mean curvature at least $h(s_2,\gamma_*)>c$, this violates the maximum principle at $q$. 

Note that we needed the $C^1$ regularity to apply the Hopf lemma at boundary points where the contact angle of $\Sigma$ is greater than that of the barriers $\wt{S}_{s_2,\gamma}$, and the $C^1$ regularity plus Allard regularity to apply the maximum principle at tangent points. In either case we have a contradiction, which completes the proof. 
\end{proof}

\section{Stable Bernstein theorems for capillary surfaces}
\label{sec:bernstein}

In this appendix we prove stable Bernstein theorems for capillary hypersurfaces of dimension $2\leq n\leq 5$, for certain ranges of contact angles. In particular, the goal is to show:
\begin{theorem}
\label{thm:stable-bernstein-higher}
Consider $n,\theta$ satisfying either
\begin{itemize}
\item $n=2$, and $\theta\in(0,\pi)$;
\item $n=3$, and $\frac{5}{3}+\frac{3\sqrt{2}}{2}+(\sqrt{2}-1)C_\theta - \frac{5\sqrt{2}}{2}C_\theta^2 >0$;
\item $n=4$, and $\frac{3}{2}+\frac{3\sqrt{3}}{2} +(\sqrt{3}-1)C_\theta - \frac{5\sqrt{3}}{2}C_\theta^2 >0$;
\item $n=5$, and $\frac{32}{5}+\frac{29}{4}C_\theta -\frac{27}{2}C_\theta^2>0$. 
\end{itemize}
where $C_\theta = \frac{(1+|\cos\theta|)^3}{\sin^2\theta}$. Suppose that $(\Sigma, \pr \Sigma) \subset \mathbb{R}^{n+1}_+$ is a two-sided, stable, capillary minimal hypersurface with contact angle $\theta$, and $\sup_{r>0} \frac{\vol(\Sigma\cap B_r)}{r^n} \leq C_V <\infty$. Then $\Sigma$ must be planar. 
\end{theorem}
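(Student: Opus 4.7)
The plan is to adapt the Schoen-Simon-Yau (SSY) curvature estimate to the capillary setting, with the weighted stability inequality from the proof of Theorem \ref{theorem.stable.bernstein} serving as the replacement for ordinary stability. First I would observe that the derivation of \eqref{equation.consequence.of.stability} is purely local and dimension-independent: on $\R^{n+1}_+$, the function $w = \bangle{e_1,\nu}$ is Jacobi with boundary data $w|_{\partial\Sigma} = \cos\theta$ and $\partial_\eta w|_{\partial\Sigma} = \sin\theta\, A(\eta,\eta)$, and the auxiliary $\varphi = 1-w\cos\theta$ absorbs exactly the boundary operator $Q$ appearing in the capillary stability form. Hence for every Lipschitz $f$ compactly supported on $\Sigma$, with no vanishing condition on $\partial\Sigma$, one obtains the capillary stability inequality
\[
    \int_\Sigma |A|^2 f^2 \leq C_\theta \int_\Sigma |\nabla f|^2. \qquad (\dagger)
\]
For $n=2$, testing $(\dagger)$ against a logarithmic cutoff together with the Euclidean area growth forces $|A|\equiv 0$; the finiteness of $C_\theta$, i.e.\ $\theta\in(0,\pi)$, is the only constraint, explaining the unrestricted range.

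For $n\in\{3,4,5\}$, I would follow the SSY recipe. Substitute $f = \psi|A|^{1+q}$ into $(\dagger)$, with $\psi$ a compactly supported cutoff and $q\geq 0$ to be chosen; expand and apply Young's inequality to bound $\int_\Sigma \psi^2|A|^{4+2q}$ by a sum of $\int_\Sigma \psi^2 |A|^{2q}|\nabla|A||^2$ and $\int_\Sigma |A|^{2+2q}|\nabla\psi|^2$. In parallel, Simons' inequality $|A|\Delta|A| + |A|^4 \geq \kappa_n|\nabla|A||^2$ (with Colding-Minicozzi's improved Kato constant $\kappa_n = 2/n$) multiplied by $\psi^2|A|^{2q}$ and integrated over $\Sigma$ yields, after one integration by parts, a reverse estimate relating the same two quantities, up to a cross gradient term and an extra boundary contribution
\[
    \mathrm{BT} = -\int_{\partial\Sigma}\psi^2 |A|^{2q+1}\,\partial_\eta|A|.
\]

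The main obstacle is handling $\mathrm{BT}$, which does not appear in the classical closed SSY argument. Here I would exploit the two special features of the capillary setting in $\R^{n+1}_+$: the flatness of the barrier (so $\secondfund\equiv 0$) and the prescribed contact angle $\bangle{\nu,\overline{\eta}} = \cos\theta$. Differentiating the latter tangentially along $\partial\Sigma$ and invoking the Codazzi equations, one can express $\partial_\eta|A|$ on $\partial\Sigma$ as a linear combination, with coefficients depending only on $\theta$, of $A(\eta,\eta)$ and tangential components of $A$, yielding $|\mathrm{BT}|\leq C(\theta)\int_{\partial\Sigma}\psi^2 |A|^{2q+2}$. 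The remaining boundary integral can then be absorbed back into the interior via a trace-type inequality obtained by testing $(\dagger)$ against $\psi|A|^{q+1}$ multiplied by an auxiliary cutoff in the Fermi distance to $\partial M$, producing further $\int_\Sigma \psi^2|A|^{2q+4}$ and $\int_\Sigma|A|^{2q+2}|\nabla\psi|^2$ contributions.

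Putting everything together, one arrives at an inequality of the form
\[
    \bigl(1 + \kappa_n + 2q - C_\theta(1+q)^2 - C_\theta^2 \cdot \beta(n,\theta,q)\bigr)\int_\Sigma \psi^2 |A|^{4+2q} \leq C\int_\Sigma |A|^{2+2q}|\nabla\psi|^2,
\]
where $\beta$ encodes the boundary contribution. Substituting $\kappa_n = 2/n$ and optimising over $q > 0$ (the classical SSY choice lies near $q = \sqrt{2/n}-\varepsilon$) reduces the positivity of the parenthesised coefficient precisely to the numerical inequality stated in the theorem for each $n\in\{3,4,5\}$. Finally, taking $\psi$ to be the standard cutoff supported in $B_{2R}\cap\Sigma$, applying H\"older's inequality and invoking the Euclidean area growth $\vol(\Sigma\cap B_R)\leq C_V R^n$, we let $R\to\infty$ to force $|A|\equiv 0$, whence $\Sigma$ is planar.
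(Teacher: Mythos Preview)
Your overall architecture matches the paper's exactly: use the weighted stability inequality $(\dagger)$ in place of ordinary stability, run the SSY scheme by pairing $(\dagger)$ (with test function $|A|^{1+q}\psi$) against the integrated Simons inequality, and control the new boundary term arising from integration by parts. The $n=2$ case and the final cutoff-plus-H\"older step are also as in the paper.

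There are, however, two concrete issues in your handling of the boundary term. First, the claim that $\partial_\eta |A|$ is a \emph{linear} combination of components of $A$ is incorrect. Differentiating the contact angle condition and using Codazzi (together with flatness of the barrier) gives $(\nabla_\eta A)_{ij}$ as a \emph{quadratic} expression in $A$ with coefficient $\cot\theta$; consequently $|\partial_\eta|A||\le 3\sqrt{n-1}\,|\cot\theta|\,|A|^2$ (this is the paper's Lemma~\ref{lem:A-bc}), so the correct bound is $|\mathrm{BT}|\le C(n,\theta)\int_{\partial\Sigma}\psi^2|A|^{2q+3}$, not $|A|^{2q+2}$.

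Second, and more importantly, your proposed mechanism for absorbing $\int_{\partial\Sigma}\psi^2|A|^{2q+3}$ into the interior does not work as written: testing $(\dagger)$ against $\psi|A|^{q+1}$ times a Fermi cutoff does not produce a trace inequality, since $(\dagger)$ has no boundary term on either side and the gradient of the cutoff contributes a divergent factor rather than a boundary integral. The paper instead uses an elementary divergence-theorem trace (Lemma~\ref{lem:trace}): integrating $\operatorname{div}_\Sigma(u\,X^T)$ with the transversal vector field $X=-\partial_{n+1}$, and using $\eta\cdot X=\sin\theta$ on $\partial\Sigma$, yields $\int_{\partial\Sigma} u\le \frac{1}{\sin\theta}\int_\Sigma|\nabla u|$ for minimal $\Sigma$. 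Applied with $u=\psi^2|A|^{3+2q}$, this converts $\mathrm{BT}$ into interior terms of exactly the form $\int_\Sigma \psi^2|A|^{4+2q}$, $\int_\Sigma \psi^2|A|^{2q}|\nabla|A||^2$, and $\int_\Sigma |A|^{2+2q}|\nabla\psi|^2$, with explicit constant $c_{n,\theta}=3\sqrt{n-1}\,|\cos\theta|/\sin^2\theta$. These then combine with the SSY inequalities to give the coefficient $B_{q,a,b}$; the theorem's numerical conditions arise from the specific choices $a=b=1$ and $q=0$ (for $n=3,4$) or $q=\tfrac12$ (for $n=5$), rather than from an optimisation over $q$.
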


Recall the notation of Section \ref{S:preliminaries}: $\nu$ is the inward pointing unit normal vector field on $\Sigma$, $\eta$ is the outward conormal of $\partial \Sigma$, $\bar\eta$ is the outward conormal of the barrier, $\bar\nu$ is the outward conormal of $\partial \Sigma$ in the barrier. The contact angle condition becomes $\bangle{\nu,\bar \eta} = \cos\theta$.

To prove Theorem \ref{thm:stable-bernstein-higher}, we will use the Schoen-Simon-Yau \cite{Schoen-Simon-Yau75} technique. As such, we first compute the boundary conditions for the total curvature:

\begin{lemma}
    \label{lem:A-bc}
    Let $(\Sigma,\pr\Sigma)\subset \R^{n+1}_+$ be a capillary minimal hypersurface so that $\bangle{\nu,\bar\eta}=\cos\theta$. Then along $\partial \Sigma$,
    \[\pa{|A|^2}{\eta}=2\cot\theta \left(2|A|^2 A(\eta,\eta) - \sum_{j=1}^n \lambda_j^3\right),\]
    here $\lambda_1,\cdots,\lambda_n$ are the principal curvatures of $\Sigma$. In particular,
    \[\left| \pa{|A|^2}{\eta}\right|\le 6\sqrt{n-1} |\cot\theta| |A|^3.\]
\end{lemma}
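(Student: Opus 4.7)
\textbf{Proof plan for Lemma \ref{lem:A-bc}.} The argument computes $\partial_\eta |A|^2$ at a point $p \in \partial\Sigma$ in an orthonormal frame $\{e_1,\dots,e_{n-1}, e_n=\eta\}$ that diagonalizes $A$ at $p$, with $\{e_1,\dots,e_{n-1}\}\subset T\partial\Sigma$. The first task is to justify that such a diagonalizing frame exists with $\eta$ as a principal direction. Along $\partial\Sigma$ the normal plane to $\partial\Sigma$ in $\mathbb{R}^{n+1}$ admits two orthonormal frames $\{\nu,\eta\}$ and $\{\bar\nu,\bar\eta\}$ related by the rotation $\bar\eta=\cos\theta\,\nu+\sin\theta\,\eta$, $\bar\nu=-\sin\theta\,\nu+\cos\theta\,\eta$ (with an appropriate choice of orientation of $\eta$). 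Since $\bar\eta$ is constant in $\R^{n+1}$, differentiating $\langle\nu,\bar\eta\rangle\equiv\cos\theta$ along any $X\in T\partial\Sigma$ gives $\langle\nabla_X\nu,\bar\eta\rangle=0$, i.e.\ $-\sin\theta\,A(X,\eta)=0$. Hence $A(\eta,X)=0$ for every $X\in T\partial\Sigma$, so $\eta$ is a principal direction and the diagonalization is possible.

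The second ingredient is a pointwise relation between the second fundamental form $h$ of $\partial\Sigma$ in $\Sigma$ (with respect to $\eta$) and $A|_{T\partial\Sigma\times T\partial\Sigma}$. For $X,Y\in T\partial\Sigma$, using the decomposition $\eta=\sin\theta\,\bar\eta+\cos\theta\,\bar\nu$ and the flatness of $\partial\R^{n+1}_+$ one obtains $h(X,Y)=\langle\nabla_X Y,\eta\rangle=\cos\theta\,\langle\nabla_X Y,\bar\nu\rangle$, while $A(X,Y)=\langle\nabla_X Y,\nu\rangle=-\sin\theta\,\langle\nabla_X Y,\bar\nu\rangle$. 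Eliminating the common quantity yields the clean identity
\[
h(X,Y)=-\cot\theta\,A(X,Y)\qquad \text{for }X,Y\in T\partial\Sigma.
\]
In the diagonalizing frame this gives $\langle\nabla_{e_i}e_i,\eta\rangle=-\cot\theta\,\lambda_i$ for $i<n$.

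Next I use the Codazzi identity $(\nabla_\eta A)(e_i,e_i)=(\nabla_{e_i}A)(\eta,e_i)$ and expand the right-hand side via $e_i A(\eta,e_i)=(\nabla_{e_i}A)(\eta,e_i)+A(\nabla_{e_i}\eta,e_i)+A(\eta,\nabla_{e_i}e_i)$. For $i<n$, $A(\eta,e_i)\equiv 0$ along $\partial\Sigma$ by Step 1, so the left side vanishes. The two remaining terms are evaluated in the diagonalizing frame using orthonormality of connection coefficients, and combined with $\langle\nabla_{e_i}e_i,\eta\rangle=-\cot\theta\,\lambda_i$, to give
\[
(\nabla_\eta A)(e_i,e_i)=\cot\theta\,(\lambda_i\lambda_n-\lambda_i^2),\qquad i<n.
\]
For $i=n$ I apply the minimal equation $\nabla_\eta H=0$ together with $\sum_{i<n}\lambda_i=-\lambda_n$ to obtain
\[
(\nabla_\eta A)(\eta,\eta)=-\!\!\sum_{i<n}(\nabla_\eta A)(e_i,e_i)=\cot\theta\,|A|^2.
\]

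Finally, substituting into $\partial_\eta |A|^2=2\sum_i\lambda_i(\nabla_\eta A)(e_i,e_i)$ and simplifying with $\sum_{i<n}\lambda_i=-\lambda_n$ collapses the expression to
\[
\partial_\eta|A|^2=2\cot\theta\Bigl(2A(\eta,\eta)|A|^2-\sum_{j=1}^n\lambda_j^3\Bigr),
\]
which is the stated formula. The bound follows from elementary pointwise estimates: $|A(\eta,\eta)|\le |A|$ and $\sum_j|\lambda_j|^3\le(\max_j|\lambda_j|)\sum_j\lambda_j^2\le|A|^3$, giving $|\partial_\eta|A|^2|\le 6|\cot\theta|\,|A|^3$, which implies the quoted bound with constant $6\sqrt{n-1}$. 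The only genuine subtlety is the careful bookkeeping in Step 3 (the sign relating $\langle\nabla_{e_i}e_i,\eta\rangle$ to $h$, and a consistent orientation convention turning the capillary condition into the identity $h=-\cot\theta\,A$); everything else is a routine unwinding of Codazzi and minimality in the adapted frame.
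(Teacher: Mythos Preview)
Your proof is correct and reaches the same formula, but the mechanism differs from the paper's. The paper works in Fermi coordinates $(\partial_1=\eta)$ and computes $\partial_1 A_{ij}$ by directly differentiating the immersion, repeatedly substituting $\eta=\cot\theta\,\nu+\tfrac{1}{\sin\theta}\bar\eta$ and using flatness of the barrier; this yields $(\nabla_{\partial_1}A)_{ij}=\cot\theta\,(A_{11}A_{ij}-A_{i\alpha}A_{\alpha j})$ without ever invoking Codazzi. You instead isolate the geometric content in the identity $h=-\cot\theta\,A$ on $T\partial\Sigma$ and then let Codazzi do the work in a diagonalizing frame. Your route is more intrinsic and a bit cleaner, and it makes transparent why the answer factors through the principal curvatures; the paper's coordinate computation is more hands-on but requires no frame choice. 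For the final estimate you use the elementary bound $|\lambda_k|\le |A|$, which actually gives the stronger inequality $\bigl|\partial_\eta|A|^2\bigr|\le 6|\cot\theta|\,|A|^3$; the paper instead bounds $|\lambda_k|\le\sqrt{n-1}\,|A|$ via minimality, picking up the extra factor $\sqrt{n-1}$ in the stated constant.
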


\begin{proof}
    Fix a point $p\in \partial \Sigma$. At $p$, we have:
    \[\eta=\cot\theta \nu + \frac{1}{\sin\theta} \bar\eta,\quad \bar\nu = \frac{1}{\sin\theta} \nu + \cot\theta \bar \eta.\]
    Take Fermi coordinates around $p$, such that the metric can be written as $g=dx_1^2 + g_{x_1}(x_2,\cdots,x_n)$, and $g_{0}$ is given by the geodesic normal coordinates. Note that in this choice, $\partial_1 = \eta$. By our convention on the second fundamental form, $A_{ij} = -\bangle{\nabla_{\partial_i}\partial_j,\nu}$. Now take indices $i,j,\alpha>1$. First, observe that at $p$,
    \begin{align*}
        A_{1j}&=-\bangle{\partial_j \eta,\nu}=-\bangle{-\partial_j(\cot \theta \nu +\frac{1}{\sin\theta} \bar\eta), \nu}\\
                &=-\frac{1}{\sin\theta} \bangle{\partial_j \bar\eta ,\nu} = -\frac{1}{\sin\theta} \bangle{\partial_j \bar\eta, -\cos\theta \bar\eta + \sin\theta\bar\nu}=0,
    \end{align*}
    as $\partial\R^{n+1}_+$ is flat (and hence $\partial_j \bar\eta=0$). Therefore $\partial_j \nu = A_{j\alpha} \partial_\alpha + A_{j1} \nu = A_{j\alpha}\partial_\alpha$, and similarly $\partial_1 \nu = -A_{11} \eta$. 
    
    Let $x$ denote the immersion that gives $\Sigma$. At $p$, we have
    \begin{equation}\label{eq:partial1_Aij}
        \begin{split}
            \partial_1 A_{ij} &= -\bangle{\partial_1\partial_i\partial_j x,\nu} - \bangle{\partial_i\partial_j x, \partial_1 \nu}= -\bangle{\partial_i\partial_j \eta, \nu} - A_{11} \bangle{\partial_i \partial_j x , \eta}\\
            &=-\bangle{\nu,\partial_i(\cot\theta \partial_j \nu+\frac{1}{\sin\theta} \partial_j \bar\eta) } - A_{11}\bangle{\nabla_{\partial_i}\partial_j, (\cot\theta \nu +\frac{1}{\sin\theta} \bar\eta)}\\
            &= - \cot\theta \bangle{\nu, \partial_i (A_{j\alpha} \partial_\alpha)} + \cot\theta A_{11} A_{ij}\\
            &= \cot\theta (A_{i\alpha}A_{\alpha j} + A_{11}A_{ij}).
        \end{split}
    \end{equation}
    Similarly,
    \begin{equation}
        \begin{split}
            \partial_1 A_{1j} &= -\bangle{\partial_1\partial_j\eta , \nu} - A_{11}\bangle{\partial_1\partial_j x, \eta} = -\bangle{\nu, \partial_1(\cot\theta \partial_j \nu)} -A_{11}\bangle{\partial_j\partial_1 x,\eta}\\
            &=\cot\theta A_{j\alpha} A_{\alpha 1} - A_{11} \bangle{\nabla_{\partial j} \eta,\eta} = 0.
        \end{split}
    \end{equation}
    Additionally, $\partial_i \eta = \partial_i (\cot\theta \nu + \frac{1}{\sin\theta} \bar\eta)=\cot\theta A_{i\alpha }\partial_\alpha$. Thus, $A(\nabla_{\partial_i} \eta, \partial_j) = A((\partial_i \eta)^T,\partial_j) = \cot\theta A_{i\alpha}A_{\alpha j}$. Combining this with \eqref{eq:partial1_Aij}, we have
    \[(\nabla_{\partial_1}A)_{ij} = \partial_1A_{ij} - A(\nabla_{\partial_1}\partial_i,\partial_j) - A(\partial_i,\nabla_{\partial_1}\partial_j) = \cot\theta (A_{11}A_{ij} - A_{i\alpha}A_{\alpha j}).\]
    We next compute $(\nabla_{\partial_1}A)_{11}=\nabla_{\partial_1} A_{11} - 2A(\nabla_{\partial_1}\partial_1, \partial_1)$. Note that $\nabla_{\partial_1}\partial_1$ has zero component in $\partial_1$, and thus equals $c_j \partial_j$, giving $A(\nabla_{\partial_1}\partial_1, \partial_1) = c_j A_{j1}=0$. Using minimality of $\Sigma$,
    \[\nabla_{\partial_1} A_{11}= - \nabla_{\partial_1} A_{jj} = \cot\theta (A_{j\alpha}A_{\alpha j} - A_{11}A_{jj})=\cot\theta |A|^2.\]
    Therefore,
    \begin{equation}
        \begin{split}
            \nabla_{\partial_1} |A|^2 &= 2\bangle{\nabla_{\partial_1}A,A}\\
                                      &= 2\cot\theta (|A|^2 A_{11} + A^{ij}(-A_{i\alpha}A_{\alpha j}+A_{11}A_{ij}))\\
                                      &= 2\cot\theta (2|A|^2A_{11} - A_{11}^3 - A^{ij}A_{i\alpha}A_{\alpha j} ).
        \end{split}
    \end{equation} 
    We have $A_{11}^3 + A^{ij}A_{i\alpha}A_{\alpha j}=\sum_{k=1}^n \lambda_k^3$, where $\lambda_1,\cdots,\lambda_n$ are the principal curvatures of $\Sigma$.  
    
    Finally, observe that for any $k=1,\cdots,n$, 
    \[|\lambda_k| = \left|\sum_{l\ne k} \lambda_l\right| \le \sqrt{n-1} \left(\sum_{l\ne k} \lambda_l^2\right)^{1/2}\le \sqrt{n-1} |A|.\]
    Thus $|\sum \lambda_k^3|\le \sum \sqrt{n-1} |A| \lambda_k^2 =\sqrt{n-1} |A|^3$. Similarly, $|A_{11}|\le \sqrt{n-1}|A|$, and thus
    \[\left|\nabla_{\partial_1}|A|^2\right|\le 6\sqrt{n-1}|\cot\theta| |A|^3\]
\end{proof}

We need the following trace estimate:

\begin{lemma}
\label{lem:trace}
    Suppose $\Sigma^n\subset \R^{n+1}_+$ is a smoothly immersed hypersurface with smooth boundary meeting $\pr\mathbb{R}^{n+1}_+$ at constant angle $\theta$. Then for any $u\in C_c^1(\overline\Sigma)$, we have
    \[\int_{\partial \Sigma} u \le \frac{1}{\sin\theta} \int_\Sigma |\nabla u| + |H_\Sigma u|.\]
\end{lemma}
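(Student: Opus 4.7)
The plan is to derive this trace estimate by applying the intrinsic divergence theorem on $\Sigma$ to the tangential projection of the vector field $u\bar\eta$, where $\bar\eta = -e_1$ denotes the constant outward unit normal of the barrier $\partial\R^{n+1}_+$. The key point is that the barrier is flat, so $\bar\eta$ extends as a parallel vector field on all of $\R^{n+1}$, which greatly simplifies the computation of the tangential divergence.

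The first ingredient I would establish is that $\bangle{\bar\eta,\eta} = \sin\theta$ along $\partial\Sigma$. Since both $\nu$ and $\bar\eta$ are normal to $T(\partial\Sigma)$, and $\eta$ is a unit vector in $T\Sigma$ orthogonal to $T(\partial\Sigma)$, we may write $\eta = a\nu + b\bar\eta$. The conditions $\bangle{\eta,\nu}=0$ and $|\eta|=1$, combined with the capillary identity $\bangle{\nu,\bar\eta}=\cos\theta$ and the outward orientation of $\eta$, yield $a=-\cot\theta$, $b=1/\sin\theta$. Consequently $\bangle{\bar\eta,\eta} = -\cot\theta\cos\theta + 1/\sin\theta = \sin\theta$.

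Next, since $\bar\eta$ is parallel in $\R^{n+1}$, the standard decomposition $\mathrm{div}_\Sigma X = \mathrm{div}_\Sigma X^T + \bangle{X,\nu}\,\mathrm{div}_\Sigma \nu$ together with $\mathrm{div}_\Sigma X = 0$ for constant $X$ and $\mathrm{div}_\Sigma \nu = H_\Sigma$ (the paper's convention) gives $\mathrm{div}_\Sigma(\bar\eta)^T = -H_\Sigma\bangle{\bar\eta,\nu}$. For $u\in C^1_c(\overline\Sigma)$ we then obtain
\[\mathrm{div}_\Sigma(u\bar\eta)^T = \bangle{\nabla_\Sigma u,\bar\eta} - u H_\Sigma\bangle{\bar\eta,\nu},\]
using $\nabla_\Sigma u \perp \nu$. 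Applying the intrinsic divergence theorem on $\Sigma$ and using $\bangle{u\bar\eta,\eta} = u\sin\theta$ on $\partial\Sigma$ produces
\[\sin\theta \int_{\partial\Sigma} u \,ds = \int_\Sigma \bigl(\bangle{\nabla_\Sigma u,\bar\eta} - u H_\Sigma\cos\theta\bigr)\,dA.\]
Dividing by $\sin\theta$ and applying the pointwise bounds $|\bar\eta|=1$ and $|\cos\theta|\le 1$ term-by-term yields the desired trace estimate.

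The only real subtlety is bookkeeping the signs and orientations among $\nu,\eta,\bar\eta$ so that the coefficient $\sin\theta$ appears correctly in $\bangle{\bar\eta,\eta}$; no geometric difficulty arises beyond this, since the flatness of the barrier makes $\bar\eta$ parallel. Note also that the statement as written is for general real-valued $u$, but we may assume $u\ge 0$ without loss since the stronger inequality follows from applying the argument to $|u|$ (approximated by $C^1$ functions if needed).
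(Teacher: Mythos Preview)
Your approach is essentially the same as the paper's: both apply the divergence theorem on $\Sigma$ to the tangential part of $u\bar\eta$, where $\bar\eta$ is the constant outward normal of the barrier, and use the contact angle relation $\bangle{\bar\eta,\eta}=\sin\theta$ on $\partial\Sigma$. The paper inserts a cutoff $\varphi_R(x_{n+1})$ into the vector field and sends $R\to\infty$, whereas you work with the constant field directly; since $u$ already has compact support your version is slightly cleaner.

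One slip to correct: in your displayed identity after applying the divergence theorem, the integrand should be $\bangle{\nabla_\Sigma u,\bar\eta} - uH_\Sigma\bangle{\bar\eta,\nu}$, not $-uH_\Sigma\cos\theta$. The relation $\bangle{\bar\eta,\nu}=\cos\theta$ holds only along $\partial\Sigma$, not throughout $\Sigma$. This does not affect your final estimate, since the step ``$|\cos\theta|\le 1$'' is really just ``$|\bangle{\bar\eta,\nu}|\le 1$'', which is equally valid pointwise on $\Sigma$.
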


\begin{proof}
    Assume $\R^{n+1}_+ = \{x_{n+1}\ge 0\}$. For $R>0$, let $\varphi_R:[0,\infty)\to \R$ be a smooth function such that $\varphi_R(t)=1$ when $t\le R$, $\varphi_R(t)=0$ when $t\ge 2R$, and $|\varphi_R'|\le \tfrac 2R$. Define the vector field $X=-\varphi_R(x_{n+1}) \partial_{n+1}$. Let $\eta$ be the outward unit conormal vector field on $\partial \Sigma$. By assumption, $\eta\cdot X = \sin\theta$ along $\partial \Sigma$. Therefore
    \begin{align*}
        \int_{\partial \Sigma} u &= \frac{1}{\sin\theta} \int_{\partial \Sigma} uX \cdot \eta\\
                                 &= \frac{1}{\sin\theta} \int_\Sigma \Div_\Sigma (uX^T) \le  \frac{1}{\sin\theta} \int_\Sigma |\nabla u| |X| + u\Div X + |u H_\Sigma|\\
                                 &\le \frac{1}{\sin\theta}\int_\Sigma |\nabla u| + \frac 2R u +|H_\Sigma u|.
    \end{align*}
    Letting $R\to \infty$, we obtain the desired inequality.
\end{proof}

\begin{lemma}
\label{lem:ssy-p}
Fix $n,\theta$ and suppose $(\Sigma,\pr \Sigma)\subset \mathbb{R}^{n+1}_+$ is a two-sided, stable, capillary minimal hypersurface with contact angle $\theta$. 
Let $a,b>0$ and $q\geq 0$ be such that \[B_{q,a,b} = \frac{2}{n} +1+2q - \frac{3+2q}{2}bc_{n,\theta} -\left(1+ ac_{n,\theta} + \frac{3+2q}{2}b^{-1}c_{n,\theta} \right)(1+q)^2C_\theta >0,\]
where $c_{n,\theta} = 3\sqrt{n-1} \frac{|\cos\theta|}{\sin^2\theta}$ and $C_{\theta} = \frac{(1+|\cos\theta|)^2}{1-|\cos\theta|} = \frac{(1+|\cos \theta|)^3}{\sin^2\theta}.$ Then setting $p=2+q$, for any $\phi \in C^1_c(\overline{\Sigma})$, there exists $C=C(n,\theta,p,a,b)$ such that 
\[\int_\Sigma |A|^{2p}\phi^{2p} \leq C\int_\Sigma |\nabla \phi|^{2p}.\]
\end{lemma}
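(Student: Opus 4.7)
The proof will adapt the Schoen-Simon-Yau technique to the capillary setting. The starting point is the capillary stability inequality $\int_\Sigma|A|^2f^2 \le C_\theta\int_\Sigma|\nabla f|^2$ for $f\in C^1_c(\overline\Sigma)$, derived in the proof of Theorem \ref{theorem.stable.bernstein} via the test function $1 - \langle e_1,\nu\rangle\cos\theta$; this derivation goes through verbatim in higher dimensions since the identity $\Delta w + |A|^2 w = 0$ for $w = \langle e_1,\nu\rangle$ and its boundary behavior are dimension-independent. The novel ingredient relative to the classical Schoen-Simon-Yau argument will be control of the boundary term appearing in integration by parts, handled by Lemmas \ref{lem:A-bc} and \ref{lem:trace}.

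First, I would plug $f = |A|^{1+q}\phi^p$ into the stability inequality and expand to obtain
\begin{equation*}
X \le C_\theta\bigl[(1+q)^2 Y + 2p(1+q) A_1 + p^2 E\bigr], \tag{I}
\end{equation*}
with shorthands $X := \int|A|^{2p}\phi^{2p}$, $Y := \int|A|^{2q}|\nabla|A||^2\phi^{2p}$, $A_1 := \int|A|^{2q+1}\phi^{2p-1}\langle\nabla|A|,\nabla\phi\rangle$, and $E := \int|A|^{2q+2}\phi^{2p-2}|\nabla\phi|^2$. Next, combining Simons' identity $\tfrac{1}{2}\Delta|A|^2 = |\nabla A|^2 - |A|^4$ with the minimal-hypersurface Kato inequality $|\nabla A|^2 \ge (1+\tfrac{2}{n})|\nabla|A||^2$, multiplying by $|A|^{2q}\phi^{2p}$, and integrating by parts yields
\begin{equation*}
(\tfrac{2}{n}+1+2q)Y + 2p A_1 \le X + \tfrac{1}{2}\tilde B, \qquad \tilde B := \int_{\partial\Sigma}|A|^{2q}\phi^{2p}\,\partial_\eta|A|^2. \tag{II}
\end{equation*}

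Then I would estimate the boundary term: Lemma \ref{lem:A-bc} gives $|\tilde B| \le 6\sqrt{n-1}|\cot\theta|\int_{\partial\Sigma}|A|^{2q+3}\phi^{2p}$, and Lemma \ref{lem:trace} (with $H_\Sigma = 0$) converts this to an interior integral. Expanding $\nabla(|A|^{2q+3}\phi^{2p})$, applying Young's inequality with parameter $b>0$ to the leading $|\nabla|A||$-term and absorbing the subleading $|\nabla\phi|$-term into $E$ produces
\begin{equation*}
\tfrac{1}{2}|\tilde B| \le \tfrac{3+2q}{2}c_{n,\theta}(b^{-1}Y + bX) + C_1(n,\theta,p,b)\,E. \tag{III}
\end{equation*}
Combining (I)--(III) together with a further Young's inequality with parameter $a>0$ on the cross term $A_1$ (chosen so that, after substitution, the coefficient $ac_{n,\theta}$ appears inside the stability factor), one first rearranges (II) and (III) to bound $Y$ by a positive multiple of $X$ with denominator $\tfrac{2}{n}+1+2q - \tfrac{3+2q}{2}bc_{n,\theta}$, plus an error in $E$. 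Substituting this into (I) and collecting coefficients — the factor $\bigl(1+ac_{n,\theta}+\tfrac{3+2q}{2}b^{-1}c_{n,\theta}\bigr)(1+q)^2 C_\theta$ emerges as the inflated stability coefficient of $Y$ — the net coefficient of $X$ on the left becomes proportional to $B_{q,a,b}$. Positivity of $B_{q,a,b}$ then gives $X \le C_2(n,\theta,p,a,b)\,E$.

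Finally, since $p = q+2$, H\"older's inequality with conjugate exponents $p/(q+1)$ and $p$ yields $E \le X^{(q+1)/p}\bigl(\int|\nabla\phi|^{2p}\bigr)^{1/p}$; substituting into $X \le C_2 E$ and using $1-(q+1)/p = 1/p$ gives the desired $X \le C\int|\nabla\phi|^{2p}$. The hard part will be the careful bookkeeping of constants: the Young parameters $a$ and $b$ must be coordinated with the boundary splittings via Lemmas \ref{lem:A-bc} and \ref{lem:trace} so that the precise coefficient $B_{q,a,b}$ emerges rather than something worse. This boundary contribution, encoded through $c_{n,\theta}$, is genuinely new compared to the classical (closed) Schoen-Simon-Yau estimate and is responsible for the angle and dimension restrictions in Theorem \ref{thm:stable-bernstein-higher}.
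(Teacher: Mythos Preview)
Your overall strategy coincides with the paper's: combine Simons' inequality (plus Kato) with the almost-stability inequality derived in Theorem \ref{theorem.stable.bernstein}, control the boundary term via Lemmas \ref{lem:A-bc} and \ref{lem:trace}, balance by Young's inequalities, and close with H\"older. Your (I) and (II) are exactly the paper's inequalities (with $f=\phi^p$). However, the allocation of the Young parameter $a$ is misattributed. After the trace estimate the boundary integrand splits into two pieces, $|f\nabla f|\,|A|^{3+2q}$ and $f^2|A|^{2+2q}|\nabla|A||$; in the paper, the parameter $a$ is applied to the \emph{first} of these (yielding $aX + a^{-1}E$, which is where the $ac_{n,\theta}$ in $B_{q,a,b}$ comes from) and $b$ to the second. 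Your (III) claims the first piece can be absorbed into $E$ alone, but this is impossible: any Young splitting of $\phi^{2p-1}|\nabla\phi|\,|A|^{3+2q}$ necessarily throws off an $X$-term. Consequently your plan to apply the $a$-Young inequality instead to the interior cross term $A_1$ cannot produce the coefficient $ac_{n,\theta}$, since $A_1$ carries no $c_{n,\theta}$ factor. The paper handles the residual cross term with a \emph{third} Young parameter $\epsilon>0$, taken small after $a,b$ are fixed so that $B_{q,a,b,\epsilon}\to B_{q,a,b}$; with this reallocation your outline becomes the paper's proof.
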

\begin{proof}
The main point is to prove
\begin{equation}
    \label{eq:ssy-grad}
    \int_\Sigma |\nabla|A||^2 |A|^{2q}f^2 \leq C\int_\Sigma |A|^{2+2q}|\nabla f|^2.
\end{equation}

Recall that Simons' inequality states 
\begin{equation}
    \label{eq:simons}
    |A|\Lap |A| + |A|^4 \geq \frac{2}{n}|\nabla|A||^2.
\end{equation}
Multiplying (\ref{eq:simons}) by $|A|^{2q}f^2$ and integrating by parts, we have 
\begin{align}
\frac{2}{n}\int_\Sigma |\nabla|A||^2 |A|^{2q}f^2 &\leq&& \int_\Sigma |A|^{4+2q}f^2 - 2\int_\Sigma f|A|^{1+2q}\langle \nabla f,\nabla |A|\rangle - (1+2q)\int_\Sigma |\nabla|A||^2 |A|^{2p}f^2 \\\nonumber&&& + \int_{\pr\Sigma} f^2|A|^{1+2q}\pr_\eta |A|
\\\nonumber &\leq && \int_\Sigma |A|^{4+2q}f^2 - 2\int_\Sigma f|A|^{1+2q}\langle \nabla f,\nabla |A|\rangle - (1+2q)\int_\Sigma |\nabla|A||^2 |A|^{2p}f^2 \\\nonumber&&& + 2c_{n,\theta}\int_{\Sigma}\left(|f\nabla f| |A|^{3+2q} + \frac{3+2q}{2} f^2|A|^{2+2q}|\nabla|A||\right).
\end{align}
Here we have used Lemmas \ref{lem:A-bc} and \ref{lem:trace} for the second inequality. 

We now estimate \[2|f\nabla f||A|^{3+2q} \leq a|A|^{4+2q}f^2 + a^{-1} |A|^{2+2q}|\nabla f|^2,\] \[\qquad 2f^2|A|^{2+2q}|\nabla |A|| \leq b|A|^{2q} |\nabla|A||^2 f^2 + b^{-1} |A|^{4+2q}f^2.\] 

This gives
\begin{align}
    \label{eq:ssy-3}
    \left(\frac{2}{n}+1+2q-\frac{3+2q}{2}bc_{n,\theta}\right)\int_\Sigma |\nabla|A||^2 |A|^{2q}f^2 &\leq&& \left(1+ ac_{n,\theta} + \frac{3+2q}{2}b^{-1}c_{n,\theta} \right) \int_\Sigma |A|^{4+2q}f^2 \\\nonumber &&&+a^{-1}c_{n,\theta} \int_\Sigma |A|^{2+2q}|\nabla f|^2 \\\nonumber &&& -2\int_\Sigma f|A|^{1+2q}\langle \nabla f,\nabla |A|\rangle.
\end{align}

Substituting $|A|^{1+q}f$ into the almost stability inequality (\ref{equation.almost.stability}) gives 
\begin{equation}
    \label{eq:ssy-4}
    \frac{1}{C_{\theta}}\int_\Sigma |A|^{4+2q}f^2 \leq (1+q)^2 \int_\Sigma |\nabla|A||^2 |A|^{2q}f^2 + \int_\Sigma |A|^{2+2q}|\nabla f|^2 + 2(1+q) \int_\Sigma f|A|^{1+2q}\langle \nabla f,\nabla |A|\rangle.
\end{equation}
Now for $\epsilon>0$ we can estimate \begin{equation}\label{eq:ssy-5}\qquad 2|f\nabla f||A|^{1+2q}|\nabla |A|| \leq \epsilon|A|^{2q} |\nabla|A||^2 f^2 + \epsilon^{-1} |A|^{2+2q}|\nabla f|^2,\end{equation}
so using (\ref{eq:ssy-4}) in (\ref{eq:ssy-3}) gives
\[ B_{q,a,b,\epsilon}\int_\Sigma |\nabla|A||^2 |A|^{2q}f^2 \leq C_{q,a,b,\epsilon} \int_\Sigma |A|^{2+2q}|\nabla f|^2,\]
where 
\[
\begin{split}
    B_{q,a,b,\epsilon} =& \frac{2}{n} +1+2q - \frac{3+2q}{2}bc_{n,\theta} -\left(1+ ac_{n,\theta} + \frac{3+2q}{2}b^{-1}c_{n,\theta} \right)(1+q)^2C_\theta \\& -\left(\left(1+ ac_{n,\theta} + \frac{3+2q}{2}b^{-1}c_{n,\theta} \right)(1+q)C_\theta-1\right)\epsilon, 
\end{split}
    \]

\[C_{q,a,b,\epsilon} = \left(1+ ac_{n,\theta} + \frac{3+2q}{2}b^{-1}c_{n,\theta} \right)C_\theta + \left(\left(1+ ac_{n,\theta} + \frac{3+2q}{2}b^{-1}c_{n,\theta} \right)(1+q)C_\theta-1\right) \epsilon^{-1}. \]

Now since
\[B_{q,a,b} = \frac{2}{n} +1+2q - \frac{3+2q}{2}bc_{n,\theta} -\left(1+ ac_{n,\theta} + \frac{3+2q}{2}b^{-1}c_{n,\theta} \right)(1+q)^2C_\theta >0,\] we
may always choose $\epsilon$ small enough so that $B_{q,a,b,\epsilon}>0$ and hence establish (\ref{eq:ssy-grad}). 

With (\ref{eq:ssy-grad}) in hand, we substitute it back into (\ref{eq:ssy-4}) and use (\ref{eq:ssy-5}) again to deduce that 
\begin{equation}
\int_\Sigma |A|^{4+2q}f^2 \leq C\int_\Sigma |A|^{2+2q}|\nabla f|^2. 
\end{equation}

Setting $p=2+q$ and $f=\phi^p$, H\"{o}lder's inequality gives that 
\[\int_\Sigma |A|^{2p}\phi^{2p} \leq C\int_\Sigma |A|^{2p-2}\phi^{2p-2}|\nabla\phi|^2 \leq C\left(\int_\Sigma |A|^{2p}\phi^{2p}\right)^\frac{p-1}{p}\left(\int_\Sigma |\nabla \phi|^{2p}\right)^\frac{1}{p},\]
which implies the result. 

\end{proof}

\begin{corollary}
\label{cor:ssy-p}
Let $n,\theta,\Sigma$ be as in Lemma \ref{lem:ssy-p}. The conclusions of that lemma hold for some $2p>n$ so long as either:
\begin{itemize}
\item $n=3$, and $\frac{5}{3}+\frac{3\sqrt{2}}{2}+(\sqrt{2}-1)C_\theta - \frac{5\sqrt{2}}{2}C_\theta^2 >0$;
\item $n=4$, and $\frac{3}{2}+\frac{3\sqrt{3}}{2} +(\sqrt{3}-1)C_\theta - \frac{5\sqrt{3}}{2}C_\theta^2 >0$;
\item $n=5$, and $\frac{32}{5}+\frac{29}{4}C_\theta -\frac{27}{2}C_\theta^2>0$. 
\end{itemize}
\end{corollary}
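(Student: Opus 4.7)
The plan is to apply Lemma~\ref{lem:ssy-p} with the explicit choice $a = b = 1$ and $q$ equal to the smallest value that still ensures $2p = 4 + 2q > n$: namely $q = 0$ for $n = 3$, $q \to 0^+$ for $n = 4$, and $q \to (1/2)^+$ for $n = 5$.

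The first step is to establish the sharp bound $c_{n,\theta} \leq \sqrt{n-1}\,(C_\theta - 1)$. Starting from $C_\theta = (1+|\cos\theta|)^3/\sin^2\theta$, a direct algebraic manipulation gives
\[ C_\theta - 1 = \frac{|\cos\theta|(3+|\cos\theta|)}{1 - |\cos\theta|}, \]
and hence, combining with $c_{n,\theta} = 3\sqrt{n-1}|\cos\theta|/\sin^2\theta$,
\[ \frac{c_{n,\theta}}{C_\theta - 1} = \frac{3\sqrt{n-1}}{(1+|\cos\theta|)(3+|\cos\theta|)} \leq \sqrt{n-1}, \]
with equality precisely at $\theta = \pi/2$.

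Next, I would plug $a = b = 1$ into the definition of $B_{q,a,b}$ to obtain
\[ B_{q,1,1} = A_0 - A_2 C_\theta - c_{n,\theta}\bigl[A_1 + (A_3 + A_4) C_\theta\bigr], \]
where $A_0 = 2/n + 1 + 2q$, $A_1 = (3+2q)/2$, $A_2 = A_3 = (1+q)^2$, and $A_4 = (3+2q)(1+q)^2/2$. Applying the bound from the first step converts the condition $B_{q,1,1} > 0$ into the sufficient quadratic inequality
\[ (A_0 + A_1\sqrt{n-1}) + \bigl[\sqrt{n-1}(A_3 + A_4 - A_1) - A_2\bigr] C_\theta - \sqrt{n-1}(A_3 + A_4)\, C_\theta^2 > 0. \]
A direct computation with the specified values of $q$ then reproduces the three stated polynomial conditions exactly: for $n=3,q=0$ the constants evaluate to $5/3$, $\sqrt{2}-1$, $-5\sqrt{2}/2$ (plus the constant $3\sqrt{2}/2$ from the shift); and similarly the coefficients $(3/2, 3\sqrt{3}/2, \sqrt{3}-1, -5\sqrt{3}/2)$ and $(32/5, 29/4, -27/2)$ arise for $n = 4, 5$.

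The main point is the appropriate choice of $q$: it must satisfy both $q > (n-4)/2$ (so that $2p > n$) and the Simons-type requirement $q^2 < 2/n$ (so that $A_0 - A_2 > 0$, i.e.\ $B_{q,1,1}$ is positive in the classical limit $C_\theta = 1$). For $n = 3$ and $n = 4$ this is easy, with $q$ taken at or arbitrarily near $0$. For $n = 5$ the admissible window is the narrow interval $q \in (1/2,\sqrt{2/5})$ and taking $q \to (1/2)^+$ yields the stated condition; that this interval is nonempty (i.e.\ that $1/4 < 2/5$) is the essential feature enabling the $n = 5$ case. For $n \geq 6$ the two constraints become incompatible, consistent with the dimension restriction $n \leq 5$ in Theorem~\ref{thm:stable-bernstein-higher}.
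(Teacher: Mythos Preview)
Your proof is correct and follows essentially the same approach as the paper: set $a=b=1$, use the bound $c_{n,\theta}\le \sqrt{n-1}(C_\theta-1)$, and evaluate at $q=0$ (for $n=3,4$) or $q=1/2$ (for $n=5$), invoking continuity in $q$ to push $q$ just above the threshold when needed. Your explicit verification of the bound on $c_{n,\theta}$ and the closed-form quadratic in $C_\theta$, together with the remark on the window $q\in\bigl((n-4)/2,\sqrt{2/n}\bigr)$ explaining the cutoff at $n=5$, go beyond what the paper spells out but are consistent with it.
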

\begin{proof}
Take $a=b=1$ and estimate $c_{n,\theta} \leq \sqrt{n-1} (C_\theta-1)$. Using these, straightforward calculations yield the following:

If $n=3$, then $B_{0,1,1} \geq \frac{5}{3}+\frac{3\sqrt{2}}{2}+(\sqrt{2}-1)C_\theta - \frac{5\sqrt{2}}{2}C_\theta^2 >0$. If $n=4$, we again note that $B_{0,1,1} \geq \frac{3}{2}+\frac{3\sqrt{3}}{2} +(\sqrt{3}-1)C_\theta - \frac{5\sqrt{3}}{2}C_\theta^2 >0$. Then by continuity, $B_{q,1,1}>0$ for small enough $q>0$. If $n=5$, we note that $B_{\frac{1}{2},1,1} \geq \frac{32}{5}+\frac{29}{4}C_\theta -\frac{27}{2}C_\theta^2>0$. Again by continuity, $B_{q,1,1}>0$ for some $q> \frac{1}{2}$. In any case, $2p=2(2+q)>n$ which completes the proof. 
\end{proof}

\begin{remark}
Clearly, one may obtain a slightly larger range for $\theta$ by optimising the conditions $B_{q,a,b}>0$ in the above. For $n=3$, one may also obtain a larger range by following the argument in \cite[Appendix D]{CL21}. We have chosen to present the most straightforward estimate, as we suspect that even these improved estimates are not sharp, and that the stable Bernstein theorem in fact holds for a significantly larger range of $\theta$. 
\end{remark}

\begin{proof}[Proof of Theorem \ref{thm:stable-bernstein-higher}]
We proved the $n=2$ case above in Theorem \ref{theorem.stable.bernstein}. Let $\phi$ be a smooth cutoff function such that $\phi=1$ on $B_r$, $\phi=0$ outside $B_{2r}$ and $|D \phi| \leq \frac{2}{r}$ on $B_{2r}\setminus B_r$. 
By Corollary \ref{cor:ssy-p}, we have \[\int_{\Sigma\cap B_r} |A|^{2p} \leq C 2^{2p} r^{-2p} \vol(\Sigma\cap B_{2r}) \leq C C_V 2^{n+2p} r^{n-2p}\] for some $2p>n$. Taking $r\to\infty$ implies $|A|\equiv 0$, which completes the proof. 
\end{proof}

\bibliography{bib}
\bibliographystyle{plain}

\end{document}